\documentclass[preprint,12pt]{elsarticle}

\usepackage[top=2.5cm, bottom=2cm, left=3cm, right=3cm]{geometry}  
\usepackage[english]{babel}
\usepackage{amsmath,amsthm,amsfonts,amssymb,mathrsfs}
\usepackage{mathtools}
\usepackage{hyperref}

\newtheorem{theorem}{Theorem}[section]
\newtheorem{definition}[theorem]{Definition}
\newtheorem{example}[theorem]{Example}

\newtheorem{lemma}[theorem]{Lemma}
\newtheorem{corollary}[theorem]{Corollary}
\newtheorem{proposition}[theorem]{Proposition}

\DeclarePairedDelimiter\ceil{\lceil}{\rceil}
\DeclarePairedDelimiter\floor{\lfloor}{\rfloor}
\def\lub{\mathbf{lub}}

\begin{document}

\begin{frontmatter}

%% Title, authors and addresses

%% use the tnoteref command within \title for footnotes;
%% use the tnotetext command for theassociated footnote;
%% use the fnref command within \author or \affiliation for footnotes;
%% use the fntext command for theassociated footnote;
%% use the corref command within \author for corresponding author footnotes;
%% use the cortext command for theassociated footnote;
%% use the ead command for the email address,
%% and the form \ead[url] for the home page:
%% \title{Title\tnoteref{label1}}
%% \tnotetext[label1]{}
%% \author{Name\corref{cor1}\fnref{label2}}
%% \ead{email address}
%% \ead[url]{home page}
%% \fntext[label2]{}
%% \cortext[cor1]{}
%% \affiliation{organization={},
%%             addressline={},
%%             city={},
%%             postcode={},
%%             state={},
%%             country={}}
%% \fntext[label3]{}

\title{Weierstrass semigroups at totally ramified places of degree one on linearized function fields}

%% use optional labels to link authors explicitly to addresses:
%% \author[label1,label2]{}
%% \affiliation[label1]{organization={},
%%             addressline={},
%%             city={},
%%             postcode={},
%%             state={},
%%             country={}}
%%
%% \affiliation[label2]{organization={},
%%             addressline={},
%%             city={},
%%             postcode={},
%%             state={},
%%             country={}}

\author[1]{Huachao Zhang}
\ead{zhanghch56@mail2.sysu.edu.cn}

\author[1,2]{Chang-An Zhao\corref{cor1}}
\ead{zhaochan3@mail.sysu.edu.cn}
\cortext[cor1]{Corresponding author.}
%% Author affiliation
\affiliation[1]{organization={School of Mathematics, Sun Yat-sen University},%Department and Organization
            city={Guangzhou},
            postcode={510275}, 
            state={Guangdong},
            country={China}}
\affiliation[2]{organization={Guangdong Key Laboratory of Information Security Technology},%Department and Organization
            city={Guangzhou},
            postcode={510006}, 
            state={Guangdong},
            country={China}}  
\begin{abstract}
    A linearized function field $F$ can be viewed as a Galois extension of a rational function field $K(x)$. For a totally ramified place $Q$ of degree one in $F/K(x)$, we give a unified description of the set $G(Q)$ of gaps at $Q$.
	As a consequence, we explicitly provide a system of generators, the multiplicity, and the Frobenius number of the Weierstrass semigroup $H(Q)$. Moreover, we give a necessary and sufficient condition for $H(Q)$ to be symmetric. 
	Then we investigate the minimal generating set of the Weierstrass semigroups at several totally ramified places of degree one.
	We not only explicitly describe the minimal generating set, but also provide functions whose coefficients of pole divisors lie in the minimal generating set.
    Finally, we investigate the linearized function field associated with the denominator of a separable polynomial and apply our results to present several examples.
\end{abstract}

\begin{keyword}
Algebraic function fields, Linearized function fields, Gaps, Weierstrass semigroups
%% PACS codes here, in the form: \PACS code \sep code
\MSC[2020] 14H55 \sep 11R58 
%% or \MSC[2008] code \sep code (2000 is the default)
\end{keyword}

\end{frontmatter}

\section{Introduction}
	Let $K$ be an algebraic extension of the finite field $\mathbb{F}_q$ with $q$ elements, where $q$ is a power of a prime $p$.
	Consider a function field $F/K$ with full constant field $K$ and genus $g\geq 1$. Given a rational place $P$ of $F$, 
	the Weierstrass semigroup at $P$, commonly written as $H(P)$, is a classical object in algebraic geometry; a number of recent developments concerning it  can be found in the literature, including \cite{castellanosWeierstrassSemigroupsPure2024,mendozaKummerExtensions2023,abdonWeierstrassPoints2019,beelenWeierstrassSemigroups2021,bartoliWeierstrassSemigroups2021a,beelenWeierstrassSemigroups2018}.
	The complementary set $G(P) := \mathbb{N}_0\setminus H(P)$ is called the set of gaps at $P$. The largest element in $G(P)$, denoted by $F_{H(P)}$, is called the Frobenius number of $H(P)$. 
   	The Weierstrass semigroup $H(P)$ is said to be symmetric if $F_{H(P)} = 2g-1$. Recently, several studies have investigated the symmetry of Weierstrass semigroups for specific function fields; see, e.g., \cite{cotterillGapSets2025,mendozaKummerExtensions2023,guneriAutomorphismGroup2013}.

	The Weierstrass semigroup and the set of gaps at a rational place have significant applications in coding theory and algebraic geometry. In particular, one can apply them to obtain algebraic geometry codes with good parameters; see, for instance, \cite{castellanosOneTwoPointCodes2016,castellanosWeierstrassSemigroupsPure2024,montanucciAGCodes2020,garciaConsecutiveWeierstrass1993,korchmarosHermitianCodes2013a}.
    They can bound the number of rational points on a curve \cite{geilBoundingNumber2009}.
	They also play a key role in classifying maximal function fields, as explored in \cite{niemannNonisomorphicMaximal2025,beelenFamilyNonisomorphic2025}.
	Moreover, they serve as a powerful tool for identifying the automorphism group of a given algebraic curve or function field, as illustrated in \cite{maAutomorphismGroups2016a,montanucciAutomorphismGroup2024,beelenWeierstrassSemigroups2023,beelenWeierstrassSemigroups2026a,beelenWeierstrassSemigroups2026}.

	Let $P_1,\dots,P_s$ be $s$ distinct rational places of the function field $F$.
	The Weierstrass semigroup $H(P_1,\dots,P_s)$ at several rational places has attracted considerable attention and been extensively investigated over the past decades.
	The theory of Weierstrass semigroups at two rational places was originally developed by Arbarello et al. in \cite{arbarelloGeometryAlgebraic1985}.
	Their work was later extended by Kim \cite{kimIndexWeierstrass1994} and Homma \cite{hommaWeierstrassSemigroup1996}, while the general case involving several rational places was first addressed by Carvalho and Torres \cite{carvalhoGoppaCodes2005}.
	In \cite{matthewsWeierstrassSemigroup2004}, Matthews introduced the concept of a minimal generating set for $H(P_1,\dots,P_s)$, denoted by $\tilde{\Gamma}(P_1,\dots,P_s)$, which is fully determined by certain special sets $\Gamma(P_1,\dots,P_k)$ for $1\leq k\leq s$.
	Subsequently, Matthews computed $\Gamma(P_1,\dots,P_k)$ for $1\leq k\leq s$ on a quotient of the Hermitian curve \cite{matthewsWeierstrassSemigroups2005}, as well as on norm-trace curves \cite{matthewsWeierstrassSemigroups2009,matthewsMinimalGenerating2010}. In \cite{matthewsTriplesRational2021}, Matthews, Skabelund and Wills gave an explicit description of the Weierstrass semigroup for each triple of places on the Hermitian curve.
	Extensive research has also been devoted to Weierstrass semigroups at several rational places on other algebraic curves and function fields.
	For example, studies have focused on GK curves \cite{tizziottiWeierstrassSemigroup2018}, curves defined by equations of the form $f(x) = g(x)$ \cite{castellanosWeierstrassSemigroup2018}, GGS curves \cite{huMultipointCodes2020}, maximal curves that cannot be covered by the Hermitian curve \cite{castellanosWeierstrassSemigroup2020}, Kummer extensions \cite{yangWeierstrassSemigroups2017,huMultipointCodes2018,castellanosGeneralizedWeierstrass2026}, and the third function field in a tower attaining the Drinfeld-Vl\u adu\c t bound \cite{yangWeierstrassSemigroups2024}.

	Let $L(y) = \sum_{i=0}^n \alpha_i y^{p^i}\in K[y]$ be a separable linearized polynomial with $\alpha_0,\alpha_n\neq 0$, having all its $p^n$ roots in $K$.
	Let $f(x), g(x)\in K[x]$ be two coprime polynomials. Consider the linearized function field $F = K(x,y)$ defined by 
	$$L(y) = \frac{f(x)}{g(x)}.$$
	Under certain conditions, the extension $F/K(x)$ is a Galois extension of function fields. In the case where $\deg f(x) > \deg g(x)$, Navarro determined an explicit basis for the Riemann-Roch space $\mathcal{L}(D)$ for a divisor $D$ consisting of some ramified places in the extension $F/K(x)$, and then determined the dimension of $\mathcal{L}(D)$ \cite{navarroBasesRiemann2024}.
	
	In this work, we investigate the linearized function field in the general case; that is, we do not require $\deg f(x)>\deg g(x)$. Inspired by main theorems in \cite{navarroBasesRiemann2024} and techniques in \cite{maharajCodeConstruction2004}, we determine the dimension of $\mathcal{L}(D)$ for a divisor $D$ consisting of some ramified places in $F/K(x)$ (see Corollary \ref{D_dim}).
	Let $Q$ be a totally ramified place of degree one in $F/K(x)$. Then we give an explicit description of the set of gaps $G(Q)$ in a unified way (see Proposition \ref{gap_set}).  
	As a consequence,  we explicitly provide a system of generators, the multiplicity, and the Frobenius number of the Weierstrass semigroup $H(Q)$ (see Corollaries \ref{semigroup} and \ref{m_and_F}).
	Furthermore, we obtain a necessary and sufficient condition for $H(Q)$ to be symmetric (see Theorem \ref{sym_thm}).
	Let $Q_1,\dots,Q_s$ be $s$ distinct totally ramified places of degree one in $F/K(x)$. Following the ideas in \cite{matthewsWeierstrassSemigroup2004,matthewsWeierstrassSemigroups2005,yangWeierstrassSemigroups2017}, we fully determine the set $\Gamma(Q_1,\dots,Q_s)$.
	We not only explicitly describe the set $\Gamma(Q_1,\dots,Q_s)$, but also provide functions whose coefficients of pole divisors lie in $\Gamma(Q_1,\dots,Q_s)$ (see Theorem \ref{thm_minimal_set}).
	Finally, let $g(x)$ be a separable polynomial. We study the linearized function field in two cases: one where $\deg f(x)\leq \deg g(x)$ and the other where $\deg f(x)>\deg g(x)$.

	This paper is organized as follows. In Section \ref{section2}, we briefly recall some notations and preliminary results related to Weierstrass semigroups and linearized function fields.
	In Section \ref{section3}, for a totally ramified place $Q$ of degree one in $F/K(x)$, we explicitly describe the set $G(Q)$ of gaps and the Weierstrass semigroup $H(Q)$,
	and then we provide a necessary and sufficient condition for $H(Q)$ to be symmetric. In Section \ref{section4}, we focus on computing $\Gamma(Q_1,\dots,Q_s)$ for $s$ distinct totally ramified places of degree one in $F/K(x)$.
	In Section \ref{section5}, we investigate the linearized function field associated with the denominator of a separable polynomial in detail and exhibit some examples by using our results.

\section{Preliminaries}\label{section2}
    Throughout this article, let $q$ be a power of a prime $p$, and let $\mathbb{F}_q$ be the finite field with $q$ elements. Let $K$ be an algebraic extension of $\mathbb{F}_q$. 
    For a set $A$, we denote by $|A|$ the cardinality of $A$. If $A$ is an infinite set, we adopt the convention that $c<|A|$ for every $c\in\mathbb{Z}$.
	For $a,b\in\mathbb{Z}$, we denote by $\gcd(a,b)$ the greatest common divisor of $a$ and $b$. Let $\mathbb{N} = \{1,2,3,\dots\}$ and $\mathbb{N}_0 = \{0,1,2,\dots\}$.
    For $c\in \mathbb{R}$, we denote by $\floor{c}$ the largest integer not greater than $c$ and by $\ceil{c}$ the smallest integer not less than $c$.

	\begin{lemma}\cite[Lemma 4.1]{castellanosWeierstrassSemigroups2024}\label{ceil_and_floor}
	Let $a$ and $b$ be elements in $\mathbb{R}$. The following statements hold:

	(i) $\floor*{-a} = -\ceil*{a}$.
	
	(ii) $\ceil*{a}-\floor*{a} = \begin{cases} 0, & \text{if } a \in \mathbb{Z}, \\ 1, & \text{if } a \notin \mathbb{Z}. \end{cases}$
	
	(iii) If $a$ and $b$ are positive integers, then
	$$\sum_{k=1}^{b-1} \left\lfloor \frac{ka}{b} \right\rfloor = \frac{(a-1)(b-1) + \gcd(a,b) - 1}{2}.$$
	\end{lemma}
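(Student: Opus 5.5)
Parts (i) and (ii) follow directly from the definitions, while (iii) is a symmetric pairing of the terms with a correction coming from the indices where $b \mid ka$.

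For (i), $\floor{-a}$ is the largest integer $m$ with $m \le -a$. Equivalently, $-m$ is the smallest integer with $-m \ge a$. Hence $-m = \ceil{a}$.

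For (ii), if $a \in \mathbb{Z}$ then both $\floor{a}$ and $\ceil{a}$ equal $a$, so the difference is $0$. Otherwise $\floor{a} < a < \floor{a}+1$. Then $\floor{a}+1$ is the least integer $\ge a$, so $\ceil{a} = \floor{a}+1$ and the difference is $1$.

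For (iii), set $S = \sum_{k=1}^{b-1} \floor{ka/b}$. Reindexing by $k \mapsto b-k$ gives
\begin{equation*}
S = \sum_{k=1}^{b-1} \floor*{a - \frac{ka}{b}} = \sum_{k=1}^{b-1} \left( a - \ceil*{\frac{ka}{b}} \right),
\end{equation*}
where the second equality uses (i) and the fact that $a$ is an integer. Adding this to the original expression for $S$ yields
\begin{equation*}
2S = (b-1)a - \sum_{k=1}^{b-1} \left( \ceil*{\frac{ka}{b}} - \floor*{\frac{ka}{b}} \right).
\end{equation*}

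By (ii), each summand in the last sum is $1$ unless $b \mid ka$. Put $d = \gcd(a,b)$. Then $b \mid ka$ holds exactly when $(b/d) \mid k$, so among $1 \le k \le b-1$ there are $d-1$ such $k$. The sum therefore equals $(b-1) - (d-1)$, and
\begin{equation*}
2S = (b-1)a - (b-1) + (d-1) = (a-1)(b-1) + d - 1,
\end{equation*}
which is the claimed formula.

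The only step needing care is this count. Writing $a = da'$ and $b = db'$ with $\gcd(a',b') = 1$, the condition $b \mid ka$ becomes $db' \mid kda'$, i.e. $b' \mid ka'$, i.e. $b' \mid k$. The multiples of $b'$ in $[1, b-1]$ are $b', 2b', \dots, (d-1)b'$, which gives exactly $d-1$ values.
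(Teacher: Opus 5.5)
Your proof is correct and complete. For (i) and (ii) you argue directly from the definitions, which is enough. For (iii), the reindexing $k\mapsto b-k$ together with (i) gives $2S=(b-1)a-\sum_{k=1}^{b-1}(\lceil ka/b\rceil-\lfloor ka/b\rfloor)$. Your count of exactly $d-1$ indices with $b\mid ka$ (via $b'\mid k$, with $k=db'=b$ excluded) is right, and the degenerate case $b=1$ works out through empty sums.

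The paper gives no proof of this lemma; it is quoted from the cited work of Castellanos, Mendoza and Quoos, so there is no argument in the paper to compare yours with. Your pairing argument is the algebraic form of the usual lattice-point proof. That proof counts the points $(k,j)$ with $1\le k\le b-1$ and $1\le j\le a-1$. It then uses the symmetry $(k,j)\mapsto(b-k,a-j)$ to pair off the points strictly below and strictly above the line $j=ka/b$, which leaves the $d-1$ points on the line and yields the same formula.
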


\subsection{Function fields}
Let $F/K$ be a function field with constant field $K$. Let $g$ be the genus of $F$. We denote by $\mathbf{P}_F$ the set of places of $F$, and by $\operatorname{Div} (F)$ the free abelian group generated by the places in $F$. For each place $P\in\mathbf{P}_F$, we denote by $v_P$ the discrete valuation and by $\mathcal{O}_P$ the valuation ring with respect to $P$. A place of degree one is called a rational place of $F$. An element $D\in \operatorname{Div}(F)$ is called a divisor of $F$ and its degree is given by $\deg D := \sum_{P\in{\operatorname{supp} D}} v_P(D)\cdot \deg P$, where $\operatorname{supp} D$ is the support of $D$.
For a non-zero element $z\in F$, we denote by $(z)_F$, $(z)_\infty$ and $(z)_0$ the principal divisor, the pole divisor and the zero divisor of $z$, respectively.

Given a divisor $D\in \operatorname{Div}(F)$, we define the Riemann-Roch space associated to $D$:  
$$\mathcal{L}(D) := \{z\in F\mid (z)_F \geq -D\}\cup\{0\}.$$
We have that $\mathcal{L}(D)$ is a vector space over $K$, and denote by $\ell(D)$ the dimension of $\mathcal{L}(D)$. 
Let $W$ be a canonical divisor of $F/K$. Then for each divisor $D\in \operatorname{Div}(F)$, the Riemann-Roch Theorem says that 
$$\ell(D) = \deg D + 1 - g + \ell(W-D).$$

Next we introduce the notion of Weierstrass semigroups in $F$. 
Let $P_1,\dots,P_s$ be $s$ rational places of $F$. The Weierstrass semigroup at $P_1,\dots,P_s$ is defined by 
$$H(P_1,\dots,P_s) := \left\{(a_1,\dots,a_s)\in \mathbb{N}_0^s~|~\exists z\in F \text{~with~}(z)_\infty = \sum_{i=1}^s a_iP_i\right\}.$$
The complementary set $G(P_1,P_2,\dots,P_s):= \mathbb{N}_0^s \setminus H(P_1,\dots, P_s)$ is called the set of gaps at $P_1,P_2,\dots,P_s$. An element in $G(P_1,P_2,\dots,P_s)$ is called a gap at $P_1,P_2,\dots,P_s$.

Let $P$ be a rational place of $F$. The Weierstrass semigroup $H(P)$ at a single rational place has more properties. If $g \geq 1$, then $G(P) = \mathbb{N}_0\setminus H(P)$ has exactly $g$ elements $1 = i_1<i_2<\dots <i_g\leq 2g-1$ at $P$.
The smallest nonzero element of $H(P)$ is called the multiplicity of $H(P)$ and is denoted by $m_{H(P)}$.
The largest element of $G(P)$ is called the Frobenius number and is denoted by $F_{H(P)}$. We say that the Weierstrass semigroup $H(P)$ is symmetric if $F_{H(P)} = 2g-1$.

\subsection{The minimal generating sets of Weierstrass semigroups}

To describe the minimal generating sets of Weierstrass semigroups, we first introduce additional notation. For two elements $\mathbf{a} = (a_1,\dots,a_s), \mathbf{b} = (b_1,\dots,b_s)\in \mathbb{N}_0^s$, we define a partial order $\preceq$ on $\mathbb{N}_0^s$ by $\mathbf{a}\preceq \mathbf{b}$ if and only if $a_i\leq b_i$ for all $1\leq i\leq s$.
Furthermore, if $a_i<b_i$ for some $1\leq i\leq s$, we write $\mathbf{a}\prec \mathbf{b}$. Let $S\subseteq \mathbb{N}_0^s$ and $\mathbf{a} \in S$. We say that $\mathbf{a}$ is minimal in $S$ with respect to $\preceq$ if $\mathbf{b}\not\preceq \mathbf{a}$ for all $\mathbf{b}\in S\setminus\{\mathbf{a}\}$.

For a function field $F/K$ with genus $g>0$, let $P_1,\dots,P_s$ be $s$ distinct rational places of $F$, where $s\leq |K|$.
As shown by Carvalho and Torres \cite{carvalhoGoppaCodes2005}, the dimensions of Riemann-Roch spaces can be used to characterize $H(P_1,P_2,\dots,P_s)$ and $G(P_1,P_2,\dots,P_s)$. Given an $s$-tuple $\mathbf{a} = (a_1,\dots,a_s)\in \mathbb{N}_0^s$, we have that $\mathbf{a}\in H(P_1,\dots,P_s)$ if and only if 
$$\ell\left(\sum_{i=1}^s a_iP_i\right) = \ell\left(\sum_{i=1}^s a_iP_i-P_j\right)+1 \text{ for all } 1\leq j\leq s.$$
Moreover, we have that $\mathbf{a}\in G(P_1,\dots,P_s)$ if and only if 
$$\ell\left(\sum_{i=1}^s a_iP_i\right) = \ell\left(\sum_{i=1}^s a_iP_i-P_j\right) \text{ for some } 1\leq j\leq s.$$

Next, we introduce the definition of $\Gamma(P_1,\dots,P_t)$ for $1\leq t\leq s$, which is proposed by Matthews \cite{matthewsWeierstrassSemigroup2004}. Let $\Gamma(P_1) := H(P_1)$. For $s\geq 2$, define $\Gamma(P_1,\dots,P_s)$ by
$$\{\mathbf{a}\in \mathbb{N}_0^s~|~ \mathbf{a} \text{ is minimal in }\{\mathbf{b}\in H(P_1,\dots,P_s)~|~b_i = a_i\} \text{~for some~} 1\leq i\leq s\}.$$
\begin{proposition}\cite[Proposition 3]{matthewsWeierstrassSemigroup2004}\label{minimal_for_all}
	Let $\mathbf{a} = (a_1,\dots,a_s)\in\mathbb{N}_0^s$. Then $\mathbf{a}$ is minimal in $\{\mathbf{b}\in H(P_1,\dots,P_s)~|~b_i = a_i\}$ with respect to $\preceq$ for some $1\leq i\leq s$,
	if and only if $\mathbf{a}$ is minimal in $\{\mathbf{b}\in H(P_1,\dots,P_s)~|~b_i = a_i\}$ with respect to $\preceq$ for all $1\leq i\leq s$.
\end{proposition}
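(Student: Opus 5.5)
It suffices to prove the forward direction, since the converse is trivial. Assume $\mathbf{a}=(a_1,\dots,a_s)\in H(P_1,\dots,P_s)$ is minimal in $\{\mathbf{b}\in H(P_1,\dots,P_s)~|~b_i=a_i\}$ for a fixed index $i$. Fix any other index $j\neq i$. We must show that $\mathbf{a}$ is also minimal in $\{\mathbf{b}\in H(P_1,\dots,P_s)~|~b_j=a_j\}$.

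Suppose not. Then there is $\mathbf{b}\in H(P_1,\dots,P_s)$ with $b_j=a_j$, $\mathbf{b}\preceq\mathbf{a}$ and $\mathbf{b}\neq\mathbf{a}$. If $b_i=a_i$, this contradicts minimality with respect to $i$ directly. So we may assume $b_i<a_i$, and the remaining task is to derive a contradiction in this case.

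The tool is the standard "lub" property of Weierstrass semigroups at several points. Take $z_a,z_b\in F$ with $(z_a)_\infty=\sum a_kP_k$ and $(z_b)_\infty=\sum b_kP_k$. Choose a constant $c\in K$ such that $z_a+cz_b$ has the correct pole order at each $P_k$. At any $k$ with $a_k\neq b_k$, the pole order is already $\max(a_k,b_k)=a_k$. At an index $k$ with $a_k=b_k>0$, the leading coefficients (with respect to a fixed local parameter) could cancel, so $c$ must avoid one bad value per such index. We choose $c\neq0$ avoiding these at most $s$ values, which is possible because $s\le |K|$ (or $K$ is infinite). This reproduces the element $\mathbf{a}$ and gives nothing new, so instead I would cancel. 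At index $j$ we have $a_j=b_j$. Choose $c$ so that the leading terms at $P_j$ cancel, i.e., take $z=z_a-cz_b$ with $c$ the ratio of the leading coefficients at $P_j$. Then $v_{P_j}(z)>-a_j$. At $P_i$ we have $b_i<a_i$, so the pole order of $z$ at $P_i$ remains exactly $a_i$. At the other indices $k$ the pole order is at most $a_k$, and $z$ has no poles outside $\{P_1,\dots,P_s\}$. Hence $z$ has pole divisor $\sum c_kP_k$ with $c_i=a_i$, $c_k\le a_k$ for all $k$, and $c_j<a_j$. Therefore $\mathbf{c}=(c_1,\dots,c_s)\in H(P_1,\dots,P_s)$, $c_i=a_i$, $\mathbf{c}\preceq\mathbf{a}$ and $\mathbf{c}\neq\mathbf{a}$. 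This contradicts the minimality of $\mathbf{a}$ with respect to $i$.

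When $a_j=b_j=0$, the counterexample $\mathbf{b}$ must differ from $\mathbf{a}$ at some other index, which forces $b_i<a_i$ again. In that situation $z_b$ has no pole at $P_j$ and no cancellation is needed. I would handle this by redoing the construction with the index $j$ replaced by an index where $b_k=a_k>0$, if one exists. If none exists, then $\mathbf{b}$ is strictly smaller than $\mathbf{a}$ at every index where $a_k>0$ apart from zeros. In that case, forming $z_a-cz_b$ with $c$ chosen for any $k$ where the pole orders coincide still works. If there is no such $k$, then $\mathbf{a}$ and $\mathbf{b}$ agree only at zero entries, including $j$. The contradiction must then come differently: minimality at $i$ should be violated by an element lowering some index other than $i$. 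I expect this bookkeeping around zero coordinates, together with the choice of $c$ avoiding accidental cancellations, to be the main obstacle. The cleanest route is the general lemma that for $\mathbf{u},\mathbf{v}\in H$ with $u_k=v_k>0$ there is $\mathbf{w}\in H$ with $w_k<u_k$, $w_l\le\max(u_l,v_l)$ for all $l$, and equality wherever $u_l\neq v_l$. I would prove this lemma first and then apply it.
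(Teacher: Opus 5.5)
Your argument in the case $a_j=b_j>0$ is correct and is the standard one. Since $P_j$ is rational, the leading coefficients of $z_a$ and $z_b$ at $P_j$ lie in $K^{*}$. So $z=z_a-cz_b$, for the right $c\in K^{*}$, has pole order $<a_j$ at $P_j$. It still has pole order exactly $a_i$ at $P_i$, because $b_i<a_i$, and pole order $\le a_k$ elsewhere. Its pole-order vector therefore contradicts minimality at $i$. Your preliminary discussion of choosing $c$ to avoid at most $s$ bad values, and the hypothesis $s\le |K|$, play no role.

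The gap is the case $a_j=0$, which you leave open as a bookkeeping obstacle. It is not an obstacle but a counterexample: read literally over $\mathbb{N}_0^s$, the statement is false.

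\begin{itemize}
\item If $\mathbf{a}\neq\mathbf{0}$ and $a_j=0$, then $\mathbf{0}\in H(P_1,\dots,P_s)$ has $j$-th coordinate equal to $a_j$ and $\mathbf{0}\prec\mathbf{a}$. So $\mathbf{a}$ is never minimal with respect to $j$.
\item Yet $\mathbf{a}$ can be minimal with respect to another index. Take $s=2$ and $\mathbf{a}=(h,0)$ with $0\neq h\in H(P_1)$. Then $\mathbf{a}$ itself is the only $\mathbf{b}\in H(P_1,P_2)$ with $b_1=h$ and $\mathbf{b}\preceq\mathbf{a}$.
\end{itemize}

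This is exactly your residual subcase: $\mathbf{b}=\mathbf{0}$ agrees with $\mathbf{a}$ only at zero coordinates. Your proposed general lemma needs $u_k=v_k>0$, so it cannot help here, and no other element can produce the contradiction either.

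The statement is meant for $\mathbf{a}\in\mathbb{N}^s$, i.e.\ all coordinates positive. This is the setting of Matthews's $\Gamma$. It matches $\Gamma(P_1,\dots,P_s)\subseteq G(P_1)\times\dots\times G(P_s)$ in Lemma \ref{Gamma_cross} and the role of $\iota_I$ in $\tilde{\Gamma}$. It also matches how the proposition is applied later in the paper, always to vectors with positive entries. Under that hypothesis $a_j\geq 1$ for every $j$, so your first case is already the complete proof. Your last paragraph should be replaced by the observation that zero coordinates must be excluded.
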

\begin{lemma}{\cite[Lemma 2.6]{castellanosWeierstrassSemigroup2018}}\label{discrepancy}
	Let $\mathbf{a} = (a_1, \dots, a_s) \in H(P_1, \dots, P_s)$ and $A = a_1P_1 + \dots +a_sP_s$. Then $\mathbf{a} \in \Gamma(P_1, \dots, P_s)$ if and only if 
    $$\ell(A)= \ell(A - P) +1 = \ell(A - P - Q) +1 \text{ and } \ell(A) = \ell(A - Q) + 1 = \ell(A - P - Q)+1$$
	for any two rational places $ P, Q \in \{P_1, \dots, P_s\} $.
\end{lemma}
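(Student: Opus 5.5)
The plan is to combine the Carvalho--Torres characterization of $H(P_1,\dots,P_s)$ with Proposition~\ref{minimal_for_all}. I read the condition as: for every pair of distinct places $P=P_j$, $Q=P_i$ in $\{P_1,\dots,P_s\}$, we have $\ell(A-P_j)=\ell(A-P_j-P_i)$. Since $\mathbf{a}\in H(P_1,\dots,P_s)$, the Carvalho--Torres criterion already gives $\ell(A)=\ell(A-P_k)+1$ for every $k$. So the statement reduces to the following equivalence:
\[
\mathbf{a}\in\Gamma(P_1,\dots,P_s)\iff \ell(A-P_j)=\ell(A-P_j-P_i)\ \text{ for all } i\neq j .
\]
By Proposition~\ref{minimal_for_all}, $\mathbf{a}\in\Gamma(P_1,\dots,P_s)$ if and only if $\mathbf{a}$ is minimal in $\{\mathbf{b}\in H(P_1,\dots,P_s)\mid b_i=a_i\}$ for every $i$. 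I will therefore prove both directions by contraposition, one fibre index $i$ at a time.

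First, suppose $\ell(A-P_j)=\ell(A-P_j-P_i)+1$ for some $i\neq j$. Take $z\in\mathcal{L}(A-P_j)\setminus\mathcal{L}(A-P_j-P_i)$. Then $z$ has poles only among $P_1,\dots,P_s$, with $v_{P_k}(z)\ge -a_k$ for all $k$, $v_{P_j}(z)\ge -a_j+1$, and $v_{P_i}(z)=-a_i$. Its pole divisor is $\sum_k b_kP_k$, so $\mathbf{b}=(b_1,\dots,b_s)\in H(P_1,\dots,P_s)$. Moreover $\mathbf{b}\preceq\mathbf{a}$, $b_i=a_i$, and $b_j\le a_j-1$, so $\mathbf{b}\neq\mathbf{a}$. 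Hence $\mathbf{a}$ is not minimal in the fibre $\{b_i=a_i\}$, and so $\mathbf{a}\notin\Gamma(P_1,\dots,P_s)$.

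Conversely, suppose $\mathbf{a}\notin\Gamma(P_1,\dots,P_s)$. By Proposition~\ref{minimal_for_all}, $\mathbf{a}$ is not minimal in the fibre over some fixed $i$. So there is $\mathbf{b}\in H(P_1,\dots,P_s)$ with $b_i=a_i$, $\mathbf{b}\preceq\mathbf{a}$ and $\mathbf{b}\neq\mathbf{a}$. Hence $b_j<a_j$ for some $j$, and necessarily $j\neq i$. Let $z$ be a function with $(z)_\infty=\sum_k b_kP_k$. Then $z\in\mathcal{L}(A-P_j)$ because $b_j\le a_j-1$. On the other hand $z\notin\mathcal{L}(A-P_j-P_i)$ because $v_{P_i}(z)=-a_i$. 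Therefore $\ell(A-P_j)>\ell(A-P_j-P_i)$. Since these two dimensions differ by at most one, this gives $\ell(A-P_j)=\ell(A-P_j-P_i)+1$ for this pair, so the condition fails.

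Together with $\ell(A)=\ell(A-P_k)+1$, the two directions give exactly the stated chain of equalities for every pair $P,Q$. The only delicate point is bookkeeping: the fibre index $i$ from the definition of $\Gamma$ must be matched with the place $Q$ that is removed second, and $j$ with the place $P$ removed first. The case $s=1$ does not arise, since then the condition is vacuous and $\Gamma(P_1)=H(P_1)$ by definition.
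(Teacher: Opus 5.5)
Your core argument is the standard one and is correct whenever every $a_k\ge 1$; the paper gives no proof of its own, only the citation to Castellanos--Tizziotti. The gap is that you tacitly assume every coordinate of $\mathbf{a}$ is positive, and two of your steps fail otherwise. In the first direction, if $a_j=0$ then $v_{P_j}(z)\ge 1$ yields only $b_j=0=a_j$, not $b_j\le a_j-1$. So $\mathbf{b}$ may equal $\mathbf{a}$, and no non-minimality follows. In the converse, if $a_i=0$ then $b_i=0$ yields only $v_{P_i}(z)\ge 0$, not $v_{P_i}(z)=-a_i$. Moreover, with the intended definition of $\Gamma$, the passage from $\mathbf{a}\notin\Gamma$ to ``$\mathbf{a}$ is not minimal in some fibre'' is not automatic.

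This is not cosmetic: at the boundary the statement as printed is false, so it cannot be proved without an extra hypothesis.

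\emph{Literal definition.} With the paper's definition of $\Gamma$ over $\mathbb{N}_0^s$, take $s=2$ and $\mathbf{a}=(h,0)$ with $0\ne h\in H(P_1)$. It is trivially minimal in $\{\mathbf{b}\in H(P_1,P_2)\mid b_1=h\}$, so $\mathbf{a}\in\Gamma$. But if $(z)_\infty=hP_1$, then $z-z(P_2)\in\mathcal{L}(A-P_2)\setminus\mathcal{L}(A-P_2-P_1)$, so the condition fails for $P=P_2$, $Q=P_1$. This is exactly where your inequality $b_2\le a_2-1$ breaks. The literal definition also breaks Lemma~\ref{Gamma_cross} and Proposition~\ref{minimal_for_all}, which is why Matthews defines $\Gamma\subseteq\mathbb{N}^s$ with positive entries.

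\emph{Matthews' definition.} Here $\mathbf{a}=\mathbf{0}\in H(P_1,\dots,P_s)$ satisfies the condition, since $\ell(0)=1$ and $\ell(-P)=\ell(-P-Q)=0$, yet $\mathbf{0}\notin\Gamma$. This is exactly where your converse breaks, because $\mathbf{0}$ is minimal in every fibre.

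\emph{Repair.} Use Matthews' definition and add the hypothesis $\mathbf{a}\ne\mathbf{0}$; the paper only applies the lemma, in Lemma~\ref{i_is_same}, to tuples with positive entries. Your argument then covers $\mathbf{a}\in\mathbb{N}^s$ verbatim. If instead $\mathbf{a}\ne\mathbf{0}$ has $a_j=0<a_i$, the function $z-z(P_j)$ with $(z)_\infty=A$ lies in $\mathcal{L}(A-P_j)\setminus\mathcal{L}(A-P_j-P_i)$. So the condition fails, consistent with $\mathbf{a}\notin\Gamma$.
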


\begin{lemma}{\cite[Lemma 4]{matthewsWeierstrassSemigroup2004}}\label{Gamma_cross}
	Suppose that $s\geq 2$. Then 
	$$\Gamma(P_1,\dots,P_s) \subseteq G(P_1)\times\dots\times G(P_s).$$
\end{lemma}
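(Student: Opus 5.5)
The claim is that for $s\geq 2$ every $\mathbf{a}\in\Gamma(P_1,\dots,P_s)$ has each coordinate $a_j$ a gap at $P_j$. I would argue by contradiction: fix $\mathbf{a}\in\Gamma(P_1,\dots,P_s)$ and an index $j$, and suppose $a_j\in H(P_j)$. In that case we can remove the $P_j$-part of the pole divisor and obtain a strictly smaller element of $H(P_1,\dots,P_s)$ in the same slice, which violates minimality.

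\emph{Step 1: the case $a_j>0$.} Pick $z\in F$ with $(z)_\infty=\sum_i a_iP_i$. Because $a_j\in H(P_j)$, there exists $w\in F$ with $(w)_\infty=a_jP_j$. Then $v_{P_j}(z)=v_{P_j}(w)=-a_j$. Since $P_j$ is rational, the residue field is $K$, so there is $c\in K^\ast$ for which $v_{P_j}(z-cw)>-a_j$. Set $z'=z-cw$. For $i\neq j$ we have $v_{P_i}(w)\geq 0$, hence $v_{P_i}(z')=-a_i$ whenever $a_i>0$, and $v_{P_i}(z')\geq 0$ when $a_i=0$. At every place outside $\{P_1,\dots,P_s\}$ both $z$ and $w$ are regular, so $z'$ is as well. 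Consequently $(z')_\infty=\sum_{i\neq j}a_iP_i+a_j'P_j$ with $0\leq a_j'<a_j$. We also need $z'\neq 0$; this holds since $z'$ has a pole of order $a_i$ at each $P_i$ with $i\neq j$ and $a_i>0$.

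\emph{Step 2: the degenerate sub-case.} Suppose all $a_i=0$ for $i\neq j$. Then $\mathbf{a}=a_je_j$. To use minimality we must exhibit some $i$ together with an element of $H$ that is strictly below $\mathbf{a}$ and has the same $i$-th coordinate. Since $s\geq2$, choose $i\neq j$, so $a_i=0$. The vector $\mathbf{0}$ lies in $H(P_1,\dots,P_s)$ (constants), satisfies $\mathbf{0}\prec\mathbf{a}$ when $a_j>0$, and has $i$-th coordinate $0=a_i$. Hence $\mathbf{a}$ is not minimal in that slice. By Proposition~\ref{minimal_for_all}, failing minimality for one index means failing it for all, so $\mathbf{a}\notin\Gamma$. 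This is the only place where $s\geq 2$ is used.

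\emph{Step 3: the general sub-case.} Otherwise some $a_i$ with $i\neq j$ is positive. In $(z')_\infty$ the coordinate at $P_i$ is still $a_i$, and the vector $(a_1,\dots,a_j',\dots,a_s)$ lies in $H(P_1,\dots,P_s)$ and is $\prec\mathbf{a}$. So $\mathbf{a}$ is not minimal in $\{\mathbf{b}\in H : b_i=a_i\}$, and again Proposition~\ref{minimal_for_all} gives a contradiction. When some other coordinates are zero this argument is unchanged, since all that matters is the choice of an index $i\neq j$.

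\emph{Step 4: the case $a_j=0$.} Here $0\in H(P_j)$, so $0$ is never a gap, and the statement forces every $a_j\geq 1$. We must show that no element of $\Gamma$ has a zero coordinate. Suppose $a_j=0$ and $\mathbf{a}\neq\mathbf{0}$, and choose $i$ with $a_i>0$. Take the slice in the index $j$, that is $\{\mathbf{b}\in H : b_j=0\}$. It contains $\mathbf{0}$, and $\mathbf{0}\prec\mathbf{a}$, so $\mathbf{a}$ is not minimal there, and by Proposition~\ref{minimal_for_all} it is not in $\Gamma$. If $\mathbf{a}=\mathbf{0}$, the relevant question is whether $\mathbf{0}$ counts as minimal. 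Under the stated definition $\mathbf{0}$ is trivially minimal, so the statement must be read with the convention, as in Matthews' setting, that $\Gamma$ consists of nonzero vectors, or equivalently that $\mathbf{0}$ is excluded.

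I expect the main obstacle to be this bookkeeping of zero coordinates and the $\mathbf{0}$ vector. Steps 1 and 3, the pole-cancellation argument, are the standard core.
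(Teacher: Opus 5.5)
Your Steps~1 and~3 are correct, but Steps~2 and~4 are wrong. In both you show that $\mathbf{a}$ is not minimal in a slice $\{\mathbf{b}\in H(P_1,\dots,P_s) : b_i=a_i\}$ with $a_i=0$ (that slice contains $\mathbf{0}$). You then use Proposition~\ref{minimal_for_all} to conclude $\mathbf{a}\notin\Gamma(P_1,\dots,P_s)$. Minimality does not transfer into a slice indexed by a zero coordinate. Take $0<a_j\in H(P_j)$ and let $a_je_j$ be the vector with $a_j$ in position $j$ and $0$ elsewhere. It lies in $H(P_1,\dots,P_s)$, and any $\mathbf{b}\preceq a_je_j$ with $b_j=a_j$ equals $a_je_j$. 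So it \emph{is} minimal in the $j$-th slice, while it is not minimal in the others. Proposition~\ref{minimal_for_all}, as printed for $\mathbf{a}\in\mathbb{N}_0^s$, already fails for this vector. Hence, with the definition over $\mathbb{N}_0^s$, and equally under your ``nonzero vectors'' reading, $a_je_j\in\Gamma(P_1,\dots,P_s)$ although $0\notin G(P_i)$ for $i\neq j$. The lemma is false in that reading. Step~2 asserts the opposite, and so does Step~4 when applied to a zero coordinate of this same vector.

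The statement holds only with Matthews' convention that $\Gamma(P_1,\dots,P_s)\subseteq\mathbb{N}^s$, i.e.\ all coordinates are positive. This is what the paper actually uses. Theorem~\ref{thm_minimal_set} lists elements of $\mathbb{N}^t$, and vectors with zero entries enter $\tilde{\Gamma}(P_1,\dots,P_s)$ only through the maps $\iota_I$. The proof of Proposition~\ref{Gamma_subseteq_S} applies $\pi_M$ precisely to discard zero entries. With that convention, the cases in Steps~2 and~4 do not arise and should be removed rather than proved. Steps~1 and~3 then give a complete proof, since Step~3 only uses a slice indexed by a positive coordinate, where the transfer is legitimate.

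The paper gives no proof and cites Matthews; the standard argument is shorter than yours. If $a_j\in H(P_j)$, then $a_je_j\in H(P_1,\dots,P_s)$ lies in the $j$-th slice. It is strictly below $\mathbf{a}$ because $s\geq 2$ and the other coordinates are positive, so $\mathbf{a}$ is not minimal there. Your pole cancellation instead produces a witness in every slice $i\neq j$. That is more work, but combining it with $a_je_j$ for the $j$-th slice lets you dispense with Proposition~\ref{minimal_for_all} entirely.
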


For $s=2$, suppose that $G(P_1) = \{a_1<a_2<\dots<a_g\}$ and $G(P_2) = \{b_1<b_2<\dots<b_g\}$.
For each gap $a_i$ at $P_1$, let $n_{a_i} = \min\{b\in\mathbb{N}_0\mid(a_i,b)\in H(P_1,P_2)\}$.
From \cite[Lemma 2.6]{kimIndexWeierstrass1994}, we have the equality $\{n_a \mid a \in G(P_1)\} = G(P_2)$, and therefore there exists a permutation $\tau$ of $\{1, 2, \dots, g\}$ such that $n_{a_i} = b_{\tau(i)}$.
The graph of the bijection between $G(P_1)$ and $G(P_2)$ defining the permutation $\tau$ is the set $\Gamma(P_1, P_2) = \{(a_i, b_{\tau(i)}) \mid i = 1, \dots, g\}$.
The following lemma characterizes it.
\begin{lemma}\cite[Lemma 2]{hommaWeierstrassSemigroup1996}\label{minimal_generating_two}
	Let $\Gamma$ be a subset of $(G(P_1) \times G(P_2)) \cap H(P_1, P_2)$. 
	If there exists a permutation $\tau$ of $\{1, 2, \dots, g\}$ such that $\Gamma = \{(a_i, b_{\tau(i)}) \mid i = 1, \dots, g\}$, then $\Gamma = \Gamma(P_1, P_2)$.
\end{lemma}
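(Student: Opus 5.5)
We argue directly from the definitions, using only the characterization of $\Gamma(P_1,P_2)$ recalled just before the statement: namely $\Gamma(P_1,P_2)$ is the graph $\{(a,n_a)\mid a\in G(P_1)\}$ with $n_a=\min\{b\in\mathbb{N}_0\mid (a,b)\in H(P_1,P_2)\}$, and the map $a\mapsto n_a$ is a bijection $G(P_1)\to G(P_2)$ by \cite[Lemma 2.6]{kimIndexWeierstrass1994}. Write $\Gamma=\{(a_i,b_{\tau(i)})\}$ as in the statement. By hypothesis $\Gamma\subseteq H(P_1,P_2)$, so for each $i$ we get $n_{a_i}\le b_{\tau(i)}$. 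It therefore suffices to prove equality $n_{a_i}=b_{\tau(i)}$ for every $i$.

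\textbf{Step 1.} Define $\sigma$ by $n_{a_i}=b_{\sigma(i)}$. This is a permutation by Kim's lemma, and the inequality above reads $b_{\sigma(i)}\le b_{\tau(i)}$, i.e.\ $\sigma(i)\le\tau(i)$ for all $i$, since the $b_j$ are strictly increasing.

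\textbf{Step 2.} Summing over $i$ gives $\sum_i\sigma(i)=\sum_i\tau(i)=g(g+1)/2$. Together with the termwise inequality $\sigma(i)\le\tau(i)$, this forces $\sigma(i)=\tau(i)$ for all $i$. Hence $n_{a_i}=b_{\tau(i)}$, and $\Gamma$ coincides with the graph $\{(a_i,n_{a_i})\}$, which is $\Gamma(P_1,P_2)$.

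\textbf{Step 3 (main point).} The only substantive inputs are the identification of $\Gamma(P_1,P_2)$ with $\{(a,n_a)\}$ and the bijectivity of $a\mapsto n_a$. Both are quoted above, so they can be taken as given. If one wanted the identification from Matthews' minimality definition directly, one would show two things.

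First, $(a,n_a)$ is minimal in $\{\mathbf b\in H\mid b_1=a\}$. This is immediate from the minimality of $n_a$.

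Second, any $(a,b)\in\Gamma(P_1,P_2)$ has $a\in G(P_1)$ by Lemma \ref{Gamma_cross}, and minimality in the slice $b_1=a$ gives $b=n_a$. Here one uses that the slice $\{b\mid (a,b)\in H\}$ has a unique minimal element, which is true since the order on a single coordinate is total.

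The hypothesis $\Gamma\subseteq G(P_1)\times G(P_2)$ is used in Step 1 only to ensure that each second coordinate is some $b_j$.

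Since the argument is short, I do not expect a real obstacle. The only care needed is that Kim's bijection really holds, since the counting in Step 2 depends on it.
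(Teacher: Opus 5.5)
Your proof is correct, and it is essentially the standard argument behind Homma's Lemma 2, which the paper only cites without proof: you compare $\tau$ with the permutation $\sigma$ defined by $n_{a_i}=b_{\sigma(i)}$, use $\Gamma\subseteq H(P_1,P_2)$ and the minimality of $n_{a_i}$ to get $\sigma(i)\le\tau(i)$ for all $i$, and conclude $\sigma=\tau$ because both are permutations. In your optional Step 3, passing from minimality in \emph{some} slice to minimality in the slice $b_1=a$ requires Proposition~\ref{minimal_for_all}; this does not affect the main argument, since the paper already recalls the identification of $\Gamma(P_1,P_2)$ with the graph of $a\mapsto n_a$.
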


Let $1\leq t\leq s$ and $I = \{i_1,\dots,i_t\}\subseteq \{1,\dots,s\}$. Define the natural inclusion
$$\begin{array}{ccccc}
	\iota_I:& \mathbb{N}_0^{t} & \longrightarrow & \mathbb{N}_0^s,\\
	&(a_{i_1},\dots,a_{i_t}) & \longmapsto &(a_1,\dots,a_s),
\end{array}$$
where $a_{j} = 0$ for $j\not\in I$, and define the natural projection
$$\begin{array}{ccccc}
	\pi_I:& \mathbb{N}_0^{s} & \longrightarrow & \mathbb{N}_0^t,\\
	&(a_1,\dots,a_s) & \longmapsto &(a_{i_1},\dots,a_{i_t}).
\end{array}$$
The minimal generating set of $H(P_1,\dots,P_s)$ is defined as
$$\tilde{\Gamma}(P_1,\dots,P_s) := \bigcup_{t=1}^s\bigcup_{\substack{I = \{i_1,\dots,i_t\}\\ 1\leq i_1<\dots<i_t\leq s}} \iota_I(\Gamma(P_{i_1},\dots,P_{i_t})).$$
Given $\mathbf{u_1},\dots,\mathbf{u_t}\in \mathbb{N}_0^s$, where $t\geq 2$ and $s\geq 1$, define the least upper bound of $\mathbf{u_1},\dots,\mathbf{u_t}$ by
$$\lub\{\mathbf{u_1},\dots,\mathbf{u_t}\} := (\max\{u_{1_1},\dots,u_{t_1}\},\dots,\max\{u_{1_s},\dots,u_{t_s}\}).$$
The following theorem shows that $H(P_1,\dots,P_s)$ is determined by $\tilde{\Gamma}(P_1,\dots,P_s)$.
\begin{theorem}\cite[Theorem 7]{matthewsWeierstrassSemigroup2004}\label{lub_semi_group}
	Suppose that $s\geq 2$. Then 
	$$H(P_1,\dots,P_s) = \{\lub\{\mathbf{u_1},\dots,\mathbf{u_s}\} ~|~\mathbf{u_1},\dots,\mathbf{u_s}\in \tilde{\Gamma}(P_1,\dots,P_s)\}.$$
\end{theorem}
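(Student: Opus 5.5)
The plan is to prove both inclusions separately, taking Lemma \ref{Gamma_cross} and Proposition \ref{minimal_for_all} as already available.

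For ``$\supseteq$'' the argument is short. Let $\mathbf{u_1},\dots,\mathbf{u_s}\in\tilde{\Gamma}(P_1,\dots,P_s)$. Each $\mathbf{u_j}$ has the form $\iota_I(\mathbf{c})$ with $\mathbf{c}\in\Gamma(P_{i_1},\dots,P_{i_t})\subseteq H(P_{i_1},\dots,P_{i_t})$, so there is $z_j\in F$ whose pole divisor is $\sum_i u_{j_i}P_i$. We want a single function with pole divisor $\sum_i \max_j u_{j_i}P_i$, and we take $z=\sum_j c_jz_j$ with constants $c_j\in K$. At each $P_i$ with $\max_j u_{j_i}>0$, the leading coefficients of the $z_j$ attaining the maximum can cancel only for $(c_j)$ in a proper hyperplane of $K^s$. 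There are finitely many places to control, and $K$ is infinite, or at least $s\le |K|$ as assumed. Hence some choice of $(c_j)$ avoids all these hyperplanes, and $z$ has exactly the lub as its pole divisor.

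For ``$\subseteq$'' we take $\mathbf{a}\in H(P_1,\dots,P_s)$ and argue by induction on $s$ and on the partial order $\preceq$. If some $a_i=0$, we project away the zero coordinates. This lands us in a Weierstrass semigroup at fewer places, which is handled by induction, or by $\Gamma(P_i)=H(P_i)$ when only one place remains. Since $\iota_I$ embeds $\tilde\Gamma$ of the subset into $\tilde\Gamma$, we can then pad the resulting list with repeated elements to get $s$ vectors.

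Now suppose all $a_i>0$. For each $i$, choose $\mathbf{m_i}$ minimal in $\{\mathbf{b}\in H\mid b_i=a_i,\ \mathbf{b}\preceq\mathbf{a}\}$. Such an element exists because this set is finite and contains $\mathbf{a}$. We claim $\mathbf{m_i}$ is minimal in the full set $\{\mathbf{b}\in H\mid b_i=a_i\}$. Indeed, any $\mathbf{b}\preceq\mathbf{m_i}$ with $b_i=a_i$ automatically satisfies $\mathbf{b}\preceq\mathbf{a}$, so it already lies in the finite set. Thus the $i$-th coordinate criterion holds, and Proposition \ref{minimal_for_all} puts $\mathbf{m_i}$ in $\Gamma(P_1,\dots,P_s)$. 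Each $\mathbf{m_i}\preceq\mathbf{a}$ and has $i$-th coordinate $a_i$, so $\lub\{\mathbf{m_1},\dots,\mathbf{m_s}\}=\mathbf{a}$, which is the required representation.

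I expect the hyperplane-avoidance step in ``$\supseteq$'' to be the main obstacle, specifically when $K$ is finite and $s=|K|$. The count of bad hyperplanes per place must then be checked carefully. One may need to use only those $z_j$ attaining maxima and argue that at most $s-1$ hyperplanes need to be avoided. Alternatively, one could replace the linear combination with an iterative argument that adds one function at a time, choosing one constant at a time.
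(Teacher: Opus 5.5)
The paper gives no proof of this statement; it only cites Matthews. Your argument is correct and is essentially the standard one. For ``$\supseteq$'' you show that $H(P_1,\dots,P_s)$ is closed under $\lub$ using a generic linear combination. For ``$\subseteq$'' you take, for each $i$, an element of $\tilde\Gamma$ lying below $\mathbf a$ with $i$-th entry $a_i$.

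There are two loose ends, both easy to close.

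First, the step you flagged is not an obstacle. The bad coefficient vectors form at most $s$ hyperplanes through the origin of $K^s$. If $|K|=q$, their union has at most $s(q^{s-1}-1)+1$ points. Since $q^s-s(q^{s-1}-1)-1=q^{s-1}(q-s)+s-1\ge 1$ whenever $2\le s\le q$, some choice of $(c_j)$ avoids all of them, including when $s=|K|$.

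Second, Proposition~\ref{minimal_for_all} is not needed, because minimality for the single index $i$ is already the defining condition. However, $\Gamma(P_1,\dots,P_s)$ has to be read inside $\mathbb N^s$, as in Matthews. Otherwise $(a_1,0,\dots,0)$ with $a_1\in H(P_1)$ would belong to $\Gamma$ and contradict Lemma~\ref{Gamma_cross}. So your $\mathbf m_i$, which may have zero entries, need not lie in $\Gamma(P_1,\dots,P_s)$ itself.

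It does lie in $\tilde\Gamma(P_1,\dots,P_s)$. Let $M$ be the support of $\mathbf m_i$; it contains $i$ because $a_i>0$. Then $\pi_M(\mathbf m_i)$ is minimal among elements of $H(P_k : k\in M)$ with $i$-th entry $a_i$: apply $\iota_M$ to any smaller such element and use the minimality of $\mathbf m_i$. Hence $\mathbf m_i\in\iota_M(\Gamma(P_k : k\in M))$ if $|M|\ge 2$, and $\mathbf m_i\in\iota_{\{i\}}(H(P_i))$ if $M=\{i\}$. With this observation, your separate induction for vectors $\mathbf a$ with zero entries also becomes unnecessary.
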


\subsection{Linearized function fields}
Let $L(y) = \sum_{i=0}^n \alpha_i y^{p^i}\in K[y]$ be a separable linearized polynomial with $\alpha_0,\alpha_n\neq 0$, having all its $p^n$ roots in $K$.
Let $f(x), g(x)\in K[x]$ be two coprime polynomials, and let $s,r\in \mathbb{N}_0$ with $(s,r)\neq (0,0)$. We consider the linearized function field $F = K(x,y)$ defined by the equation 
\begin{equation}\label{equation1}
	L(y) = \frac{f(x)}{g(x)} =  \alpha\cdot \frac{\prod_{j=0}^r q_j(x)^{m_j}}{\prod_{i=0}^{s}p_i(x)^{n_i}},
\end{equation}
where $\alpha\in K$, $n_1,\dots,n_s, m_1,\dots,m_r\in \mathbb{N}$, $p_1(x),\dots,p_s(x)$, $q_1(x),\dots,q_r(x)\in K[x]$ are pairwise distinct monic irreducible polynomials, $p_0(x) = q_0(x) = 1$, $n_0 = \sum_{j=1}^r m_j\cdot\deg q_j(x) - \sum_{i=1}^{s} n_i\cdot\deg p_i(x)$, and $m_0 = -n_0$. 
Moreover, suppose that $\gcd(n_i,p) = 1$ for all $1\leq i\leq s$, and suppose that either $n_0\leq 0$ or $n_0>0$ with $\gcd(n_0,p) = 1$. 
According to \cite[Proposition 3.7.10]{stichtenothAlgebraicFunctionFields2009}, the extension $F/K(x)$ is a Galois extension and has several useful properties.

For the convenience of describing the zeros and poles of $y$, we introduce two sets. We define
$$I := \left\{\begin{array}{l}\{1,2,\dots,s\}, \text{ if } n_0 \leq 0,\\
	\{0,1,\dots,s\}, \text{ if } n_0>0, \end{array}\right. \text { and }
J := \left\{\begin{array}{l}\{1,2,\dots,r\}, \text{ if } n_0 \geq 0,\\
	\{0,1,\dots,r\}, \text{ if } n_0<0. \end{array}\right.$$
For $1\leq i\leq s$, we denote by $P_i$ the zero of $p_i(x)$ in $\mathbf{P}_{K(x)}$. We denote by $P_\infty$ the pole of $x$ in $\mathbf{P}_{K(x)}$. For convenience, we also denote by $P_0$ the pole of $x$ in $\mathbf{P}_{K(x)}$.
Each place $P$ in $\{P_i\in\mathbf{P}_{K(x)}\mid i\in I\}$ is totally ramified in $F/K(x)$.
For each $i\in I$, we denote by $Q_i$ the only place in $\mathbf{P}_F$ lying over $P_i$. 

Let $d_i := \deg\,p_i(x)$ for $1\leq i\leq s$ and $e_j := \deg\,q_j(x)$ for $1\leq j\leq r$. Let $d_0 = e_0 := 1$. 
Let $m := \sum_{i\in I} n_id_i$. It is clear that $m = \sum_{j\in J} m_je_j$.
By the Hurwitz Genus Formula, the genus of $F$ is 
$$g = \frac{(p^n-1)(m+\sum_{i\in I} d_i-2)}{2}.$$
Let $\beta_1 = 0,\beta_2,\dots,\beta_{p^n}$ be all roots of $L(y)$.
For each $1\leq j\leq r$, we denote by $S_{j}$ the zero of $q_j(x)$ in $\mathbf{P}_{K(x)}$, and denote by $R_{jk}$ the only zero of $(y-\beta_k)$ in $\mathbf{P}_F$ lying over $S_j$ for $1\leq k\leq p^n$.
If $J = \{0,1,\dots,r\}$, we denote by $R_{0k}$ the only zero of $(y-\beta_k)$ in $\mathbf{P}_F$ lying over $P_0$.
Let $D_0 := {\rm Con}_{F/K(x)} (P_0) $, where ${\rm Con}_{F/K(x)}$ is the conorm with respect to $F/K(x)$.
We have the following divisors 
\begin{equation}\label{divisor1}
	(p_i(x))_F = p^n Q_i - d_iD_0 \text{ for } i\in I,
\end{equation}
\begin{equation}\label{divisor2}
	(q_j(x))_F =\sum_{k=1}^{p^n}R_{jk} - e_j D_0 \text{ for } j\in J,
\end{equation}
\begin{equation}\label{divisor3}
	(y-\beta_k)_F = \sum_{j\in J} m_j R_{jk} - \sum_{i\in I} n_i Q_i \text{ for } 1\leq k\leq p^n.
\end{equation}
For any divisor $D\in \operatorname{Div}(F)$, we write
$$D = \sum_{P\in \mathbf{P}_{K(x)}}\sum_{Q\in \mathbf{P}_{F}, Q\mid P} a_Q Q,$$
where $a_Q$ are integers such that $a_Q = 0$ for almost all $Q\in \mathbf{P}_{F}$.
We define the restriction of $D$ to $K(x)$ as
$$D|_{K(x)} := \sum_{P\in \mathbf{P}_{K(x)}} \min\left\{\floor*{\frac{a_Q}{e(Q | P)}}: Q|P\right\}P.$$ 
\begin{lemma}\cite[Lemma 2.1]{maharajCodeConstruction2004}\label{RR_cap}
	Using the notation above, we have
	$$\mathcal{L}(D)\cap K(x) = \mathcal{L}(D|_{K(x)}).$$
\end{lemma}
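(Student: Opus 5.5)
Since Lemma \ref{RR_cap} is quoted from \cite{maharajCodeConstruction2004}, I would verify it directly from the definitions, proving the two inclusions separately and reducing both to a place-by-place comparison of valuations in $F/K(x)$. The only fact needed is that for $z\in K(x)$ and a place $Q\mid P$ we have $v_Q(z)=e(Q|P)\,v_P(z)$. Write $D=\sum_P\sum_{Q\mid P}a_QQ$ and $D|_{K(x)}=\sum_P b_PP$, where $b_P=\min_{Q\mid P}\floor*{a_Q/e(Q|P)}$. Note that $b_P=0$ for almost all $P$: indeed $a_Q=0$ for all but finitely many $Q$, and each $P$ has only finitely many extensions $Q$. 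So $D|_{K(x)}$ is a genuine divisor of $K(x)$.

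\emph{Inclusion $\subseteq$.} Let $0\neq z\in\mathcal{L}(D)\cap K(x)$ and fix $P\in\mathbf{P}_{K(x)}$. For every $Q\mid P$ we have $e(Q|P)v_P(z)=v_Q(z)\geq -a_Q$. Hence $v_P(z)\geq -a_Q/e(Q|P)$, that is, $-v_P(z)\leq a_Q/e(Q|P)$. Since $-v_P(z)$ is an integer, this gives $-v_P(z)\leq\floor*{a_Q/e(Q|P)}$. Taking the minimum over all $Q\mid P$ yields $v_P(z)\geq -b_P$. As $P$ was arbitrary, $z\in\mathcal{L}(D|_{K(x)})$.

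\emph{Inclusion $\supseteq$.} Let $0\neq z\in\mathcal{L}(D|_{K(x)})$. Then $z\in K(x)\subseteq F$. Given any $Q\in\mathbf{P}_F$, let $P=Q\cap K(x)$. We have $v_P(z)\geq -b_P\geq-\floor*{a_Q/e(Q|P)}$. Multiplying by $e(Q|P)>0$ gives
$$v_Q(z)=e(Q|P)v_P(z)\geq -e(Q|P)\floor*{a_Q/e(Q|P)}\geq -a_Q.$$
Thus $(z)_F\geq -D$ and $z\in\mathcal{L}(D)$. The zero element lies in both spaces, which completes the argument.

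There is no serious obstacle. The one point requiring care is the integrality step in the first inclusion: an integer that is at most a real number $c$ is at most $\floor*{c}$, and this is exactly what justifies using the floor in the definition of the restriction.
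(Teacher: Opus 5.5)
Your proof is correct and complete. Both inclusions reduce, place by place, to $v_Q(z)=e(Q|P)\,v_P(z)$ for $z\in K(x)$, together with the fact that an integer bounded above by $a_Q/e(Q|P)$ is bounded above by its floor. The paper gives no proof of this lemma; it imports the statement from Maharaj's Lemma 2.1, and your valuation comparison is the standard direct argument behind that result.
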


\section{The Weierstrass semigroup at a totally ramified place of degree one on linearized function fields}\label{section3}
In this section, we consider the linearized function field $F = K(x,y)$ defined by the equation \eqref{equation1}. 
For a totally ramified place $Q$ of degree one in $F/K(x)$, we provide a unified and explicit description of the set $G(Q)$ of gaps at $Q$.
We then determine a system of generators, the multiplicity, and the Frobenius number of the Weierstrass semigroup $H(Q)$.
In addition, we establish a necessary and sufficient condition for $H(Q)$ to be symmetric. 
We begin this section by presenting a theorem which is similar to \cite[Theorem 2.2]{maharajCodeConstruction2004}.

\begin{theorem}\label{LD_oplus}
	Let $D\in\operatorname{Div}(F)$ be a divisor with $\operatorname{supp}(D) \subseteq \{Q_i\mid i\in I\}$. Then 
	$$\mathcal{L}(D) = \bigoplus_{k=0}^{p^n-1} \mathcal{L}\left(\left(D+(y^k)_F\right)|_{K(x)}\right)y^k.$$
\end{theorem}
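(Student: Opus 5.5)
Since $1,y,\dots,y^{p^n-1}$ is a basis of $F/K(x)$, every $z\in F$ has a unique expansion $z=\sum_{k=0}^{p^n-1} z_k y^k$ with $z_k\in K(x)$. The plan is to show that $z\in\mathcal{L}(D)$ if and only if $z_ky^k\in\mathcal{L}(D)$ for every $k$. Granting this, the statement follows. Indeed, $z_ky^k\in\mathcal{L}(D)$ means $(z_k)_F\ge -D-(y^k)_F$, that is, $z_k\in\mathcal{L}(D+(y^k)_F)\cap K(x)$. By Lemma \ref{RR_cap} this space is $\mathcal{L}((D+(y^k)_F)|_{K(x)})$. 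The sum is direct by the uniqueness of the expansion. The inclusion ``$\supseteq$'' is immediate, since each summand lies in $\mathcal{L}(D)$. The whole content is therefore the decomposition step for ``$\subseteq$''.

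For the decomposition I follow Maharaj's argument. Fix a place $Q$ of $F$ and compare the valuations $v_Q(z_ky^k)$.

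\emph{Case $Q=Q_i$ with $i\in I$.} Here $Q_i$ is totally ramified of index $p^n$, and by \eqref{divisor3} we have $v_{Q_i}(y)=-n_i$. Since $v_{Q_i}(z_k)\in p^n\mathbb{Z}$, we get $v_{Q_i}(z_ky^k)\equiv -kn_i \pmod{p^n}$. When $\gcd(n_i,p)=1$, which holds for $1\le i\le s$, and also for $i=0$ when $n_0>0$, these residues are pairwise distinct for $0\le k<p^n$. So the nonzero terms have distinct valuations, and the strict triangle inequality gives $v_{Q_i}(z)=\min_k v_{Q_i}(z_ky^k)$. This yields $v_{Q_i}(z_ky^k)\ge -v_{Q_i}(D)$ for each $k$.

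\emph{Case $Q$ not among the $Q_i$.} Then $Q\notin\operatorname{supp}D$, so the condition is $z\in\mathcal{O}_Q$. I want $z_ky^k\in\mathcal{O}_Q$ for every $k$. Let $P=Q\cap K(x)$, with $P\ne P_i$ for $i\in I$. Then $f/g$ is regular at $P$, so $y$ is integral over $\mathcal{O}_P$. Its minimal polynomial $L(T)-f/g$ has derivative $\alpha_0\neq0$, a unit. Hence the discriminant of the basis $\{y^k\}$ is a unit at $P$, and $\{y^k\}$ is an integral basis of the integral closure of $\mathcal{O}_P$ in $F$. Since $z$ is regular at every place above $P$ (all lie outside $\operatorname{supp}D$), $z$ lies in that integral closure, so each $z_k\in\mathcal{O}_P$. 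Then $z_ky^k$ is regular at every place above $P$, in particular at $Q$.

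The main obstacle is the second case, specifically the places over $P_0$ when $n_0\le 0$. These places lie over $P_0$ but are not totally ramified there; they are the unramified places $R_{0k}$ and the conorm of $P_0$. At them $y$ is still integral, because $f/g$ has no pole at $P_0$ when $n_0\le0$, and the constant derivative $\alpha_0$ keeps the discriminant a unit. So the integral-basis argument applies uniformly, but I will check this point carefully. I will also confirm that the restriction $|_{K(x)}$ in Lemma \ref{RR_cap} is applied to $D+(y^k)_F$ exactly as written, including its values at places outside $\operatorname{supp}D$, where $(y^k)_F$ contributes zeros at the $R_{jk}$.
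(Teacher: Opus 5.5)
Your proposal is correct and follows essentially the same argument as the paper. You reduce to showing that $z=\sum_k z_ky^k\in\mathcal{L}(D)$ forces each $z_ky^k\in\mathcal{L}(D)$. You handle places outside $\{Q_i\mid i\in I\}$ via the integral basis $\{1,y,\dots,y^{p^n-1}\}$, whose minimal polynomial has derivative $\alpha_0$, a unit. You handle each $Q_i$ via the pairwise distinct residues $-kn_i \bmod p^n$ of $v_{Q_i}(z_ky^k)$, and you finish with Lemma~\ref{RR_cap}.

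Your worry about the places over $P_0$ when $n_0\le 0$ resolves as you expect. In that case $v_{P_0}(f/g)=-n_0\ge 0$, so the minimal polynomial of $y$ has coefficients in $\mathcal{O}_{P_0}$. This is exactly what the paper's proof uses, implicitly, for every $P\notin\{P_i\mid i\in I\}$.
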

\begin{proof}
	Note that $\{1,y,\dots,y^{p^n-1}\}$ is a basis of $F/K(x)$.
	For $z_0,z_1,\dots,z_{p^n-1}\in K(x)$, we will show that 
	$$z_0 + z_1y + \dots + z_{p^n-1}y^{p^n-1} \in \mathcal{L}(D) \text{ if and only if } z_ky^k\in \mathcal{L}(D) \text{ for all } 0\leq k\leq p^n-1.$$
	The reverse implication is clear. Suppose that
	$$z = z_0 + z_1y + \dots + z_{p^n-1}y^{p^n-1} \in \mathcal{L}(D)$$
	is a non-zero element. We first show that the only possible poles in $\mathbf{P}_{K(x)}$ of $z_k$ are in $\{P_i\mid i\in I\}$ for all $0\leq k\leq p^n-1$.
	Indeed, assume that $P\notin \{P_i\mid i\in I\}$ is a pole of $z_k$ for some $0\leq k\leq p^n-1$.
	Note that for any place $Q\in \mathbf{P}_F$ lying over $P$, we have $Q\notin \operatorname{supp} (D)$, which implies $v_Q(z)\geq 0$ and then $z\in \bigcap_{Q|P} \mathcal{O}_Q$.
	It is clear that the minimal polynomial of $y$ over $K(x)$ satisfies
	$$\varphi(T) = \sum_{i=0}^{n} \alpha_i T^{p^i} - \alpha\cdot \frac{\prod_{j=0}^r q_j(x)^{m_j}}{\prod_{i=0}^{s}p_i(x)^{n_i}} \in \mathcal{O}_P[T],$$
	and $v_Q(\varphi'(y)) = v_Q(\alpha_0) = 0$. By \cite[Corollary 3.5.11]{stichtenothAlgebraicFunctionFields2009}, we have that $\{1,y,\dots,y^{p^n-1}\}$ is an integral basis of $F$ at $P$.
	Thus the integral closure $\mathcal{O}'_P$ of $\mathcal{O}_P$ in $F$ is
	$$\mathcal{O}'_P = \bigcap_{Q|P} \mathcal{O}_Q = \sum_{k=0}^{p^n-1}\mathcal{O}_P y^k.$$
	This implies that $z\in \mathcal{O}'_P$ and then each $z_k\in \mathcal{O}_P$, which is a contradiction. 
	Hence, we obtain that $v_{Q}(z_ky^k) = v_{Q}(z_k) + kv_Q(y) \geq 0 $ for all $Q\in \mathbf{P}_F \setminus \{Q|P_i : i\in I\}$.

	For each $i\in I$ and $0\leq k\leq p^n-1$, we have 
	$$v_{Q_i}(z_ky^k) = p^n v_{P_i}(z_k) + kv_{Q_i}(y) = p^nv_{P_i}(z_k) - kn_i.$$
	Since $p^n v_{P_i}(z_k) - kn_i$ are distinct modulo $p^n$, we get that $v_{Q_i}(z_ky^k)$ are distinct. Thus, we obtain that 
	$$v_{Q_i}(z_ky^k) \geq \min_{0\leq k\leq p^n-1}\{v_{Q_i}(z_ky^k)\} = v_{Q_i}(z) \geq -v_{Q_i}(D).$$
	Therefore, we have $z_k y^k \in \mathcal{L}(D)$.

	Now, by Lemma \ref{RR_cap}, for $0\leq k\leq p^n-1$, we have that $z_ky^k\in \mathcal{L}(D)$ if and only if 
	\begin{align*}
		z_k\in \left(\mathcal{L}(D)/y^k\right)\cap K(x) = \mathcal{L}\left(D + (y^k)_F\right)\cap K(x) = \mathcal{L}\left(\left(D+(y^k)_F\right)|_{K(x)}\right).
	\end{align*}
	The proof is completed.
\end{proof}

	Applying the above theorem, we can determine the dimension of $\mathcal{L}(D)$ by Riemann-Roch Theorem.
	The following result extends \cite[Theorem 3.4]{navarroBasesRiemann2024}. 
\begin{corollary}\label{D_dim}
	Suppose that $\{l_1,\dots,l_{|I|}\}$ be a permutation of $I$ and $1\leq t\leq |I|$. Let $(a_1,\dots,a_t)\in \mathbb{Z}^{t}$. Then 
	$$\ell\left(\sum_{i=1}^{t} a_i Q_{l_i}\right) = \sum_{k=0}^{p^n-1}\max\left\{0,~ \sum_{i=1}^{t} \floor*{\frac{a_i-kn_{l_i}}{p^n}}d_{l_i} + \sum_{i=t+1}^{|I|}\floor*{\frac{-kn_{l_i}}{p^n}}d_{l_i}+1\right\}.$$
\end{corollary}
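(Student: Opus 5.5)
Set $D=\sum_{i=1}^t a_iQ_{l_i}$. Its support lies in $\{Q_i\mid i\in I\}$, so Theorem \ref{LD_oplus} applies and
$$\ell(D)=\sum_{k=0}^{p^n-1}\ell\bigl((D+(y^k)_F)|_{K(x)}\bigr).$$
It then remains to compute each restricted divisor on $K(x)$ explicitly and to evaluate its dimension on the rational function field.

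By \eqref{divisor3} with $\beta_1=0$, we have $(y)_F=\sum_{j\in J}m_jR_{j1}-\sum_{i\in I}n_iQ_i$. Hence
$$D+(y^k)_F=\sum_{i=1}^t(a_i-kn_{l_i})Q_{l_i}-\sum_{i=t+1}^{|I|}kn_{l_i}Q_{l_i}+\sum_{j\in J}km_jR_{j1}.$$
We take restrictions place by place in $\mathbf{P}_{K(x)}$:
\begin{itemize}
\item Each $P_{l_i}$ is totally ramified with $e=p^n$. Its coefficient is therefore $\floor*{(a_i-kn_{l_i})/p^n}$ for $i\le t$, and $\floor*{-kn_{l_i}/p^n}$ for $i>t$.
\item Over a place $S_j$ with $j\in J$ (including $P_0$ when $0\in J$), only $R_{j1}$ carries a nonzero coefficient. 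The other places $R_{jk'}$ over $S_j$ carry coefficient $0$ and are unramified, so the minimum of the floors is $0$ (when $k\ge1$). The same holds for $k=0$.
\item Every other place has coefficient $0$.
\end{itemize}
So the restriction is $\sum_{i=1}^{|I|}c_{i,k}P_{l_i}$, where $c_{i,k}$ is the floor above.

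On $K(x)$, with $\deg P_{l_i}=d_{l_i}$ and $d_0=1$ for the infinite place, we have $\ell(E)=\max\{0,\deg E+1\}$ for every divisor $E$, since the genus is $0$ and $K$ is the full constant field. Substituting $\deg E=\sum_i c_{i,k}d_{l_i}$ gives exactly the stated summand for each $k$, and summing over $k$ gives the formula.

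The main point needing care is the restriction at places over $S_j$: the floor of the minimum must be taken over all places above $S_j$, including the ones on which $y^k$ has no zero. For this I need that the $R_{jk'}$ ($1\le k'\le p^n$) are exactly the places over $S_j$ and are distinct. This is why those places contribute $0$ rather than a positive term. I also need to check that when $0\in I$ (the case $n_0>0$) the place $P_0$ is treated among the $Q_{l_i}$ with degree $d_0=1$, consistent with the conventions fixed before Theorem \ref{LD_oplus}.
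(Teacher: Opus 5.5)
Your proposal is correct and follows the paper's proof. You apply Theorem \ref{LD_oplus}, compute $(D+(y^k)_F)|_{K(x)}$ from $(y)_F$ in \eqref{divisor3} with $\beta_1=0$, and evaluate each summand as $\max\{0,\deg E+1\}$ on $K(x)$. Your explicit check that the places over $S_j$ contribute coefficient $0$ (because the $R_{jk'}$ with $k'\neq 1$ carry coefficient $0$) is a detail the paper leaves implicit.
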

\begin{proof}
	For each $0\leq k\leq p^n-1$, we have 
	$$\sum_{i=1}^{t} a_iQ_{l_i} + (y^k)_F = \sum_{i=1}^{t} (a_i - kn_{l_i})Q_{l_i} - \sum_{i=t+1}^{|I|}kn_{l_i} Q_{l_i} + \sum_{j\in J}km_j R_{j0},$$
	and the restriction of this divisor to $K(x)$ is 
	$$\left(\sum_{i=1}^{t} a_iQ_{l_i} + (y^k)_F \right)\Big|_{K(x)} = \sum_{i=1}^{t} \floor*{\frac{a_i-kn_{l_i}}{p^n}} P_{l_i} + \sum_{i=t+1}^{|I|}\floor*{\frac{-kn_{l_i}}{p^n}}P_{l_i}.$$
	By Theorem \ref{LD_oplus}, we have 
	$$\mathcal{L}\left(\sum_{i=1}^{t}a_iQ_{l_i}\right) = \bigoplus_{k=0}^{p^n-1} \mathcal{L}\left(\left(\sum_{i=1}^{t}a_i Q_{l_i}+(y^k)_F\right)\Big|_{K(x)}\right)y^k.$$
	Thus
	$$\ell\left(\sum_{i=1}^{t}a_iQ_{l_i}\right) = \sum_{k=0}^{p^n-1}\ell\left(\sum_{i=1}^{t} \floor*{\frac{a_i-kn_{l_i}}{p^n}} P_{l_i} + \sum_{i=t+1}^{|I|}\floor*{\frac{-kn_{l_i}}{p^n}}P_{l_i}\right).$$
	It follows from Riemann-Roch Theorem that 
	$$\ell\left(\sum_{i=1}^{t} a_i Q_{l_i}\right) = \sum_{k=0}^{p^n-1}\max\left\{0,~ \sum_{i=1}^{t} \floor*{\frac{a_i-kn_{l_i}}{p^n}}d_{l_i} + \sum_{i=t+1}^{|I|}\floor*{\frac{-kn_{l_i}}{p^n}}d_{l_i} + 1\right\}.$$
\end{proof}

The next lemma characterizes the set of gaps at a single totally ramified place of degree one in $F/K(x)$.
\begin{lemma}\label{one_gap}
	Suppose that $l\in I$ and $d_l = 1$.
	Then $a\in \mathbb{N}$ is a gap at $Q_l$ if and only if 
	$$\floor*{\frac{a-kn_l}{p^n}}+\sum_{i\in I,i\neq l}\floor*{\frac{-kn_i}{p^n}}d_{i}\leq -1,$$
	where $k\in\{0,1,\dots,p^n-1\}$ is the unique element such that $a-kn_l\equiv 0\pmod {p^n}$.
\end{lemma}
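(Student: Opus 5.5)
Since $Q_l$ is rational, a natural number $a$ is a gap at $Q_l$ exactly when $\ell(aQ_l)=\ell((a-1)Q_l)$; otherwise the difference is $1$. I will compute both dimensions with Corollary \ref{D_dim}, taking $t=1$ and $l_1=l$. Set
$$c_k:=\sum_{i\in I,i\neq l}\floor*{\frac{-kn_i}{p^n}}d_i, \qquad 0\le k\le p^n-1.$$
Since $d_l=1$, Corollary \ref{D_dim} gives
$$\ell(aQ_l)-\ell((a-1)Q_l)=\sum_{k=0}^{p^n-1}\left(\max\left\{0,\floor*{\frac{a-kn_l}{p^n}}+c_k+1\right\}-\max\left\{0,\floor*{\frac{a-1-kn_l}{p^n}}+c_k+1\right\}\right).$$

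The key step is the elementary fact that $\floor*{\frac{a-kn_l}{p^n}}-\floor*{\frac{a-1-kn_l}{p^n}}$ equals $1$ if $p^n\mid a-kn_l$ and equals $0$ otherwise. Because $l\in I$, we have $\gcd(n_l,p)=1$: for $l\ge1$ this is a standing hypothesis, and for $l=0$ it holds because $n_0>0$ with $\gcd(n_0,p)=1$. Hence $n_l$ is invertible modulo $p^n$, and there is exactly one $k\in\{0,\dots,p^n-1\}$ with $a\equiv kn_l \pmod{p^n}$. This is the $k$ in the statement.

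For every other index the two floors coincide, so the corresponding summands cancel. For the distinguished $k$, write $N=\floor*{\frac{a-kn_l}{p^n}}+c_k$. The remaining summand is $\max\{0,N+1\}-\max\{0,N\}$, which equals $1$ if $N\ge0$ and $0$ if $N\le -1$. Therefore $\ell(aQ_l)=\ell((a-1)Q_l)$ holds exactly when $N\le -1$, which is the stated criterion.

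There is no serious obstacle. The only points requiring care are checking that $\gcd(n_l,p)=1$ in the case $l=0$, and keeping $t=1$ in Corollary \ref{D_dim} so that the sum $c_k$ runs over all $i\in I$ with $i\neq l$.
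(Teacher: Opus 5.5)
Your proposal is correct and follows the same route as the paper: you characterize a gap by $\ell(aQ_l)=\ell((a-1)Q_l)$, apply Corollary \ref{D_dim} with $t=1$, and observe that only the unique $k$ with $a\equiv kn_l \pmod{p^n}$ can contribute to the difference. You also make explicit two points the paper leaves implicit, namely that $\max\{0,N+1\}-\max\{0,N\}$ vanishes exactly when $N\le -1$, and that $\gcd(n_l,p)=1$ (including for $l=0$) is what makes this $k$ unique.
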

\begin{proof}
	Let $a\in \mathbb{N}$. We have that $a$ is a gap at $Q_l$ if and only if $\ell(aQ_l) = \ell((a-1)Q_l)$. For $k\in\{0,1,\dots,p^n-1\}$, note that $\floor*{\frac{a-kn_l}{p^n}}\neq \floor*{\frac{a-kn_l-1}{p^n}}$ if and only if $a-kn_l\equiv 0 \pmod {p^n}$.
	Then by Corollary \ref{D_dim}, we obtain that $a\in \mathbb{N}$ is a gap at $Q_l$ if and only if 
	$$\floor*{\frac{a-kn_l}{p^n}}+\sum_{i\in I,i\neq l}\floor*{\frac{-kn_i}{p^n}}d_i\leq -1,$$
	where $k\in\{0,1,\dots,p^n-1\}$ is the unique element such that $a-kn_l\equiv 0\pmod {p^n}$.
\end{proof}

Now we explicitly provide a unified description of the gap set at a single totally ramified place of degree one in $F/K(x)$.
\begin{proposition}\label{gap_set}
	Let $\lambda\in\mathbb{Z}$ with $\gcd(p,\lambda) = 1$.
	Suppose that $l\in I$ and $d_l = 1$. 
	Then 
	\begin{align*}
	G(Q_l) = \Bigg\{jp^n - i\lambda n_l ~\Big | ~ &1\leq i\leq p^n-1,\\
	&\ceil*{\frac{i\lambda n_l}{p^n}}\leq j\leq m\ceil*{\frac{i\lambda}{p^n}} - \sum_{k\in I}\floor*{\frac{i\lambda n_k}{p^n}}d_k + \floor*{\frac{i\lambda n_l}{p^n}}- 1\Bigg\}.
	\end{align*}
	In particular, suppose that $\lambda = -1$. Then
	\begin{equation}\label{description2}
		G(Q_l) = \left\{jp^n + in_l ~\Big | ~ 1\leq i\leq p^n-1,\, \ceil*{\frac{-in_l}{p^n}}\leq j\leq \sum_{k\in I}\ceil*{\frac{in_k}{p^n}}d_k - \ceil*{\frac{in_l}{p^n}}-1\right\}.
	\end{equation}

	Furthermore, suppose that $\lambda$ is the inverse of $-n_l$ modulo $p^n$. Then
	\begin{equation}\label{description3}
	G(Q_l) = \Bigg\{jp^n+i ~\Big | ~ 1\leq i\leq p^n-1,\, 0\leq j\leq m\ceil*{\frac{i\lambda}{p^n}} - \sum_{k\in I}\floor*{\frac{i\lambda n_k}{p^n}}d_k-2\Bigg\}.
	\end{equation}
\end{proposition}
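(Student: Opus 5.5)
The plan is to start from Lemma \ref{one_gap} and reparametrize the gaps. Since $\gcd(n_l,p)=1$, the residue of $a$ modulo $p^n$ determines the index $k\in\{0,\dots,p^n-1\}$ with $a\equiv kn_l \pmod{p^n}$, and conversely. The case $k=0$ gives no gaps, because then the left side of the criterion is $\floor{a/p^n}\ge 0$ for $a\ge 1$ divisible by $p^n$. Hence gaps only occur for $1\le k\le p^n-1$.

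Given $\lambda$ with $\gcd(p,\lambda)=1$, I will index the residue classes by $i\in\{1,\dots,p^n-1\}$ and set $k\equiv -i\lambda \pmod{p^n}$. Since $\lambda$ is invertible modulo $p^n$, this is a bijection onto $\{1,\dots,p^n-1\}$. Explicitly,
\[
k = p^n\ceil*{\frac{i\lambda}{p^n}} - i\lambda .
\]
Every $a$ in this class has the form $a=jp^n-i\lambda n_l$, since $a\equiv kn_l\equiv -i\lambda n_l$.

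The gap condition then reduces to arithmetic with floors. Put $c=\ceil{i\lambda/p^n}$, so $k=cp^n-i\lambda$. Then
\[
\floor*{\frac{-kn_t}{p^n}} = -cn_t+\floor*{\frac{i\lambda n_t}{p^n}}
\quad\text{and}\quad
\frac{a-kn_l}{p^n} = j - cn_l .
\]
Substituting into Lemma \ref{one_gap} with $d_l=1$, and using $\sum_{t\in I}n_td_t=m$, the condition $\le -1$ becomes
\[
j \le mc - \sum_{t\in I}\floor*{\frac{i\lambda n_t}{p^n}}d_t + \floor*{\frac{i\lambda n_l}{p^n}} - 1 .
\]
The lower bound on $j$ comes from requiring $a\ge 1$, i.e.\ $jp^n > i\lambda n_l$. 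Since $p^n\nmid i\lambda n_l$, this is equivalent to $j\ge \ceil{i\lambda n_l/p^n}$. This gives the general description. The only real care needed is in the sign bookkeeping and in using Lemma \ref{ceil_and_floor}(i),(ii) correctly.

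Both special cases follow by substitution.

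For $\lambda=-1$: write $a=jp^n+in_l$. By Lemma \ref{ceil_and_floor}(i) we have $\floor{-in_t/p^n}=-\ceil{in_t/p^n}$ and $\ceil{-i/p^n}=0$. The upper bound becomes $\sum_t\ceil{in_t/p^n}d_t-\ceil{in_l/p^n}-1$, and the lower bound is $\ceil{-in_l/p^n}$.

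For $\lambda\equiv(-n_l)^{-1}$: we have $-i\lambda n_l = i + p^n\mu_i$ with $\mu_i$ an integer, so $\mu_i = -\ceil{i\lambda n_l/p^n}$ by Lemma \ref{ceil_and_floor}(ii). Set $j'=j+\mu_i$, so that $a=j'p^n+i$. The lower bound becomes $j'\ge 0$. For the upper bound, shift by $\mu_i$ and use
\[
\floor*{\frac{i\lambda n_l}{p^n}}-\ceil*{\frac{i\lambda n_l}{p^n}}=-1,
\]
which holds because $p^n\nmid i\lambda n_l$. This turns the $-1$ into $-2$ and yields \eqref{description3}.

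I expect the main obstacle to be keeping these ceiling/floor identities straight, not any conceptual step.
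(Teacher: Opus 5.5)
Your argument is correct, but it is organized differently from the paper's. The paper uses Lemma \ref{one_gap} in one direction only. It checks that every element $jp^n-i\lambda n_l$ of the proposed set satisfies the gap criterion, and shows that distinct pairs $(i,j)$ give distinct integers. It then computes the cardinality of the set with Lemma \ref{ceil_and_floor}(iii) (after reducing $\lambda$ modulo $p^n$), obtaining exactly $\frac{(p^n-1)(m+\sum_{k\in I}d_k-2)}{2}=g$. Equality with $G(Q_l)$ follows because there are exactly $g$ gaps.

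You instead use that Lemma \ref{one_gap} is an equivalence and that the index $k$ sorts $\mathbb{N}$ by residue class modulo $p^n$. The class $k=0$ contains no gaps. For $k\neq 0$, the substitution $k=p^n\lceil i\lambda/p^n\rceil-i\lambda$ turns the criterion into exactly the upper bound on $j$. Positivity of $a$, together with $p^n\nmid i\lambda n_l$, gives the lower bound. This yields both inclusions at once. You need neither the injectivity check, the floor-sum identity, the Hurwitz genus formula, nor the fact that $|G(Q_l)|=g$. The paper's counting route, in return, gives an independent consistency check against the genus formula.

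Your derivations of \eqref{description2} and \eqref{description3} match the paper's. One small point: $\mu_i=-\lceil i\lambda n_l/p^n\rceil$ follows directly from $i\lambda n_l/p^n=-\mu_i-i/p^n$ with $0<i/p^n<1$, not from Lemma \ref{ceil_and_floor}(ii).
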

\begin{proof}
	Define the set 
	\begin{align*}
	G = \Bigg\{jp^n - i\lambda n_l ~\Big | ~ &1\leq i\leq p^n-1,\\
	&\ceil*{\frac{i\lambda n_l}{p^n}}\leq j\leq m\ceil*{\frac{i\lambda}{p^n}} - \sum_{k\in I}\floor*{\frac{i\lambda n_k}{p^n}}d_k + \floor*{\frac{i\lambda n_l}{p^n}}- 1\Bigg\}.
	\end{align*}
	For $jp^n-i\lambda n_l\in G$, let $u\in\{0,1,\dots,p^n-1\}$ be the unique element such that $jp^n-i\lambda n_l-un_l\equiv 0\pmod {p^n}$.
	Since $\gcd(p,n_l) = 1$ we obtain $ u+i\lambda \equiv 0 \pmod {p^n}$. It follows that $-i\lambda = p^n\floor*{\frac{-i\lambda}{p^n}} + u$. Then 
	\begin{align*}
		~&\floor*{\frac{jp^n-i\lambda n_l-un_l}{p^n}}+\sum_{k\in I,k\neq l}\floor*{\frac{-un_k}{p^n}}d_k\\
		=~&j + n_l\floor*{\frac{-i\lambda}{p^n}} + \floor*{\frac{-i\lambda}{p^n}}\sum_{k\in I,k\neq l} n_k d_k + \sum_{k\in I,k\neq l} \floor*{\frac{i\lambda n_k}{p^n}}d_k \\
		=~& j - m \ceil*{\frac{i\lambda}{p^n}} + \sum_{k\in I}\floor*{\frac{i\lambda n_k}{p^n}}d_k-\floor*{\frac{i\lambda n_l}{p^n}}\leq -1.
	\end{align*}
	Applying Lemma \ref{one_gap}, we conclude that $jp^n-i\lambda n_l\in G(Q_l)$. This yields $G\subseteq G(Q_l)$.

	Let $\lambda = bp^n + \bar{\lambda}$, where $b\in\mathbb{Z}$ and $1\leq \bar{\lambda}\leq p^n-1$.
	Suppose that $j_1p^n - i_1\lambda n_l, j_2p^n-i_2\lambda n_l\in G$ with $j_1p^n - i_1\lambda n_l = j_2p^n-i_2\lambda n_l$. Then we have $(i_1-i_2)\lambda n_l \equiv 0 \pmod {p^n}$. Since $\gcd(p,n_l) = \gcd(p,\lambda)=1$ we obtain that $i_1 = i_2$ and then $j_1 = j_2$.
	Thus by Lemma \ref{ceil_and_floor}, we get
	{\allowdisplaybreaks\begin{align*}
		\# G &= \sum_{i=1}^{p^n-1} \left(m\ceil*{\frac{i\lambda}{p^n}}-\sum_{k\in I}\floor*{\frac{i\lambda n_k}{p^n}}d_k + \floor*{\frac{i\lambda n_l}{p^n}} - \ceil*{\frac{i\lambda n_l}{p^n}}\right)\\
		& = \sum_{i=1}^{p^n-1} \left(m-1+m\floor*{\frac{i\lambda}{p^n}}-\sum_{k\in I}\floor*{\frac{i\lambda n_k}{p^n}}d_k \right)\\
		& = (m-1)(p^n-1) + m \sum_{i=1}^{p^n-1} \floor*{\frac{i\lambda}{p^n}} - \sum_{k\in I}d_k\sum_{i=1}^{p^n-1}\floor*{\frac{i\lambda n_k}{p^n}}\\
		& = (m-1)(p^n-1) + m \sum_{i=1}^{p^n-1} \floor*{\frac{i\bar{\lambda}}{p^n}} - \sum_{k\in I}d_k\sum_{i=1}^{p^n-1}\floor*{\frac{i\bar{\lambda} n_k}{p^n}}\\
		& = (m-1)(p^n-1) + m\frac{(\bar{\lambda}-1)(p^n-1)}{2}- \sum_{k\in I}\frac{d_k(\bar{\lambda} n_k-1)(p^n-1)}{2}\\
		& = (m-1)(p^n-1) + \frac{(m\bar{\lambda}-m)(p^n-1)}{2}- \frac{(m\bar{\lambda}-\sum_{k\in I}d_k)(p^n-1)}{2}\\
		& = \frac{(p^n-1)(m+\sum_{k\in I}d_k-2)}{2} = g.
	\end{align*}}
	This concludes the desired result $G(Q_l) = G$.
	In particular, let $\lambda = -1$. For each $1\leq i\leq p^n-1$, note that $m\ceil*{\frac{-i}{p^n}} = 0$ and $\ceil*{\frac{-in_l}{p^n}} = \floor*{\frac{in_l}{p^n}}$.
	Thus the desired expression is obtained.

	Furthermore, suppose that $-\lambda n_l = 1 + bp^n$ for some $b\in\mathbb{Z}$. Then we have
	\begin{align*}
	G(Q_l) = \Bigg\{ j'p^n + i'+i'bp^n ~\Big | ~ &1\leq i'\leq p^n-1,\\
	&\ceil*{\frac{i'\lambda n_l}{p^n}}\leq j'\leq m\ceil*{\frac{i'\lambda}{p^n}} - \sum_{k\in I}\floor*{\frac{i'\lambda n_k}{p^n}}d_k + \floor*{\frac{i'\lambda n_l}{p^n}}- 1\Bigg\}.
	\end{align*}
	Note that $\ceil*{\frac{i'\lambda n_l}{p^n}} = -i'b  + \ceil*{\frac{-i'}{p^n}} = -i'b$ and $\floor*{\frac{i'\lambda n_l}{p^n}} = -i'b + \floor*{\frac{-i'}{p^n}} = -i'b-1$ for all $1\leq i'\leq p^n-1$.
	Let $i = i'$ and $j = j'+i'b$, then we obtain that 
	\begin{align*}
	G(Q_l) = \Bigg\{jp^n+i ~\Big | ~ &1\leq i\leq p^n-1,\, 0\leq j\leq m\ceil*{\frac{i\lambda}{p^n}} - \sum_{k\in I}\floor*{\frac{i\lambda n_k}{p^n}}d_k-2\Bigg\}.
	\end{align*}
\end{proof}

By the description \eqref{description3} of $G(Q_l)$, we immediately obtain the following corollary.
\begin{corollary}\label{semigroup_equal}
    Suppose that $d_i = d_j = 1$ for some $i,j\in I$. If $n_i \equiv n_j \pmod {p^n}$, then $G(Q_i) = G(Q_j)$ and $H(Q_i) = H(Q_j)$.
\end{corollary}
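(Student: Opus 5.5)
The plan is to read the claim off description \eqref{description3} in Proposition \ref{gap_set}. The one thing to check is that every ingredient of that description depends on $l$ only through the residue of $n_l$ modulo $p^n$, and not through $l$ itself.

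First, apply Proposition \ref{gap_set} to $Q_i$ and to $Q_j$; this is legitimate because $d_i=d_j=1$. Since $\gcd(n_i,p)=1$, the residue $-n_i$ is invertible modulo $p^n$. Let $\lambda$ be an integer with $-\lambda n_i\equiv 1 \pmod{p^n}$. Because $n_i\equiv n_j \pmod{p^n}$, the same integer also satisfies $-\lambda n_j\equiv 1\pmod{p^n}$, so it is a valid choice of inverse for both places. Here I use that the proposition holds for \emph{any} integer $\lambda$ with the stated property, since its derivation only used $-\lambda n_l = 1+bp^n$ for some $b\in\mathbb{Z}$.

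Next, substitute this common $\lambda$ into \eqref{description3}. The resulting sets are
$$\Bigg\{jp^n+i ~\Big|~ 1\leq i\leq p^n-1,\ 0\leq j\leq m\ceil*{\frac{i\lambda}{p^n}}-\sum_{k\in I}\floor*{\frac{i\lambda n_k}{p^n}}d_k-2\Bigg\}.$$
In this expression $m$, $I$, the $n_k$ and the $d_k$ are global data of $F$, and $\lambda$ is now the same for both places. The index $l$ no longer appears anywhere. Hence $G(Q_i)=G(Q_j)$, and taking complements in $\mathbb{N}_0$ gives $H(Q_i)=H(Q_j)$.

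There is no real obstacle. The only point needing care is confirming that \eqref{description3} is independent of the particular representative $\lambda$ of the inverse class; if one fixed a specific representative per place, one would instead have to check invariance under $\lambda\mapsto\lambda+p^n$, which is avoided by choosing one common $\lambda$.
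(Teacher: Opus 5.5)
Your proposal is correct and is essentially the paper's own proof. The paper likewise takes a common inverse $\lambda$ of $-n_i$ (hence also of $-n_j$) modulo $p^n$. It then reads $G(Q_i)=G(Q_j)$ directly off \eqref{description3}, whose right-hand side no longer involves $l$, and passes to complements to get $H(Q_i)=H(Q_j)$.
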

\begin{proof}
    Let $1\leq \lambda \leq p^n-1$ be inverse of $-n_i$ modulo $p^n$. Since $n_i \equiv n_j \pmod {p^n}$, we have that $\lambda$ is also the inverse of $-n_j$. It follows from Proposition \ref{gap_set} that $G(Q_i) = G(Q_j)$ and then $H(Q_i) = H(Q_j)$.
\end{proof}

Moreover, from the description \eqref{description2} of $G(Q_l)$, we will obtain some corollaries, which are able to determine a system of generators, the multiplicity, and the Frobenius number of the Weierstrass semigroup $H(Q_l)$.
\begin{corollary}\label{semigroup}
Suppose that $l\in I$ and $d_l = 1$. Then
$$H(Q_l) = \left\langle p^n, \left(\sum_{k\in I}\ceil*{\frac{i n_k}{p^n}}d_k-\ceil*{\frac{in_l}{p^n}}\right)p^n + in_l : 1\leq i\leq p^n-1 \right\rangle.$$
\end{corollary}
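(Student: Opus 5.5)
Our plan is to read off the semigroup from description \eqref{description2} in Proposition \ref{gap_set}, sorting nonnegative integers by their residue class modulo $p^n$. For $1\leq i\leq p^n-1$ put
$$c_i := \left(\sum_{k\in I}\ceil*{\frac{in_k}{p^n}}d_k-\ceil*{\frac{in_l}{p^n}}\right)p^n + in_l .$$
Since $\gcd(n_l,p)=1$, the integers $in_l$ with $1\leq i\leq p^n-1$ run over all nonzero residues modulo $p^n$, each exactly once. So every positive integer not divisible by $p^n$ has the form $jp^n+in_l$ for a unique $i\in\{1,\dots,p^n-1\}$ and a unique $j\in\mathbb{Z}$. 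Positive multiples of $p^n$ are non-gaps by \eqref{description2}, or directly because $p^n\in H(Q_l)$: the function $1/p_l(x)$ has pole divisor $p^nQ_l$ by \eqref{divisor1} with $d_l=1$.

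Next we fix a residue class $i$. The integers $jp^n+in_l$ with $j\geq \ceil*{-in_l/p^n}$ are exactly the positive integers in that class, because $jp^n+in_l>0$ holds if and only if $j>-in_l/p^n$, and $-in_l/p^n$ is not an integer. By \eqref{description2}, such an integer is a gap exactly when
$$j\leq \sum_{k\in I}\ceil*{\frac{in_k}{p^n}}d_k-\ceil*{\frac{in_l}{p^n}}-1 .$$
Hence the non-gaps in class $i$ are precisely $c_i+tp^n$ with $t\geq 0$, provided $c_i$ is positive. To check positivity, the term $k=l$ contributes $\ceil*{in_l/p^n}d_l=\ceil*{in_l/p^n}$, which cancels the subtracted term. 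Therefore
$$c_i=\sum_{k\in I,k\neq l}\ceil*{\frac{in_k}{p^n}}d_k\,p^n+in_l\ \geq\ in_l>0,$$
since each $\ceil*{in_k/p^n}\geq 1$. So $c_i$ is the least element of $H(Q_l)$ in its residue class, i.e. the Apéry-type element with respect to $p^n$.

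Finally we compare the two sets. Each $c_i$ and $p^n$ lies in $H(Q_l)$, so the semigroup they generate is contained in $H(Q_l)$. Conversely, each element of $H(Q_l)$ is either $0$, a positive multiple of $p^n$, or $c_i+tp^n$ with $t\geq 0$. All of these lie in $\langle p^n,c_1,\dots,c_{p^n-1}\rangle$, which gives equality.

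The only delicate step is the positivity and range check, namely that the lower bound $\ceil*{-in_l/p^n}$ in \eqref{description2} exactly cuts out the positive integers. If that check fails, the threshold element $c_i$ might not be the minimal positive non-gap in its class.
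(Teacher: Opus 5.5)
Your proposal is correct and follows the paper's own route: the paper also reads $H(Q_l)=\mathbb{N}_0\setminus G(Q_l)$ directly off \eqref{description2}, and you additionally make explicit the uniqueness of the representation $jp^n+in_l$ and the positivity of the threshold elements, which the paper leaves implicit. One small aside: your ``direct'' witness $1/p_l(x)$ for $p^n\in H(Q_l)$ fails when $l=0$ (there $p_0(x)=1$ and $D_0=p^nQ_0$, so the witness should be $x$), but your primary justification via \eqref{description2} already covers that case.
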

\begin{proof}
    It follows from \eqref{description2} that 
    \begin{align*}
        H(Q_l) = & ~ \mathbb{N}_0\setminus G(Q_l) \\
        = & ~\left\{ jp^n+in_l ~\Big|~ 1 \leq i \leq p^n-1, \, j\geq \sum_{k\in I}\ceil*{\frac{in_k}{p^n}}d_k - \ceil*{\frac{in_l}{p^n}} \right\}\cup\{jp^n\mid j\geq 0\}\\
        = & ~\left\langle p^n, \left(\sum_{k\in I}\ceil*{\frac{in_k}{p^n}}d_k - \ceil*{\frac{in_l}{p^n}}\right)p^n + in_l : 1\leq i\leq p^n-1 \right\rangle.
    \end{align*}
\end{proof}

\begin{corollary}\label{m_and_F}
	Suppose that $l\in I$ and $d_l = 1$. Then 
	$$m_{H(Q_l)} = \begin{cases}
		p^n, \text{ if } |I|\geq 2,\\
		\min\{p^n,n_l\}, \text{ if } |I| = 1,
	\end{cases}$$
	and $F_{H(Q_l)} =  \left(m - \sum_{k\in I}\floor*{\frac{n_k}{p^n}}d_k + \floor*{\frac{n_l}{p^n}}-1\right)p^n - n_l$.
\end{corollary}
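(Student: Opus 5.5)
From description \eqref{description2} in Proposition \ref{gap_set}, every positive integer $a$ with $a\not\equiv 0 \pmod{p^n}$ is written uniquely as $a = jp^n + in_l$ with $1\leq i\leq p^n-1$. We set
$$c_i := \sum_{k\in I}\ceil*{\frac{in_k}{p^n}}d_k - \ceil*{\frac{in_l}{p^n}}.$$
Then $a$ is a gap if and only if $j\leq c_i-1$. The smallest non-gap in the class of $in_l$ is therefore $s_i := c_ip^n + in_l$. Every multiple of $p^n$ lies in $H(Q_l)$. Hence $m_{H(Q_l)} = \min\{p^n, s_1,\dots,s_{p^n-1}\}$ and $F_{H(Q_l)} = \max_i (s_i - p^n)$. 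The plan is to locate the minimizing and maximizing $i$ for these two quantities.

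For the multiplicity, we write $s_i = \sum_{k\in I,k\neq l}\ceil*{\frac{in_k}{p^n}}d_k\, p^n + p^n\ceil*{\frac{in_l}{p^n}}\cdot 0 + in_l$. More carefully, $s_i = \bigl(\sum_{k\neq l}\ceil*{\frac{in_k}{p^n}}d_k\bigr)p^n + in_l$. If $|I|\geq 2$, some $k\neq l$ has $\ceil*{in_k/p^n}\geq 1$, so $s_i\geq p^n + in_l > p^n$, and the multiplicity is $p^n$. If $|I|=1$, then $s_i = in_l$, whose minimum over $i$ is $n_l$, giving $\min\{p^n,n_l\}$.

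For the Frobenius number, we use $\ceil*{x} = \floor*{x}+1$ for non-integral $x$. Note that $in_k/p^n\notin\mathbb{Z}$ for $k\in I$, since $\gcd(n_k,p)=1$; for $k=0$ with $n_0>0$ this is also assumed. This gives
$$s_i - p^n = \Bigl(\sum_{k\in I} d_k - 2 + \sum_{k\in I}\floor*{\frac{in_k}{p^n}}d_k - \floor*{\frac{in_l}{p^n}}\Bigr)p^n + in_l.$$
We then rewrite via $in_k = p^n\floor*{in_k/p^n} + r_k(i)$ with $0<r_k(i)<p^n$:
$$s_i - p^n = \sum_{k\neq l}(in_k - r_k(i))d_k + \bigl(\textstyle\sum_k d_k - 2\bigr)p^n + in_l,$$
up to bookkeeping. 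The target expression at $i=p^n-1$ is $(m - \sum_k\floor*{n_k/p^n}d_k + \floor*{n_l/p^n} - 1)p^n - n_l$. The key step is to show that $s_i$ is maximized at $i=p^n-1$. I would compare the $i$ and $p^n-1$ terms using the symmetry $\floor*{(p^n-i)n_k/p^n}$ versus $\floor*{in_k/p^n}$ and $\ceil*{\cdot}$. Alternatively, I would express $s_i$ as $\sum_{k\neq l} d_k\ceil*{in_k/p^n}p^n + in_l$, which is visibly increasing in $i$, since each term $\ceil*{in_k/p^n}$ and $in_l$ is nondecreasing, respectively strictly increasing. This monotonicity is the simplest route, so $\max_i s_i = s_{p^n-1}$.

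It remains to simplify $s_{p^n-1}-p^n$. We use $\ceil*{(p^n-1)n_k/p^n} = n_k - \floor*{n_k/p^n}$, valid since $n_k/p^n\notin\mathbb{Z}$, together with $\sum_{k\in I}n_kd_k = m$ and $d_l=1$. This gives
$$\Bigl(m - \sum_k\floor*{n_k/p^n}d_k - n_l + \floor*{n_l/p^n}\Bigr)p^n + (p^n-1)n_l - p^n,$$
which equals the claimed formula. The main obstacle is merely confirming monotonicity of $s_i$ in $i$, and checking the non-integrality hypotheses for each $k\in I$, including $k=0$.
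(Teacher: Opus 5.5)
Your proposal is correct and follows essentially the same route as the paper. From \eqref{description2}, $c_i\geq 1$ when $|I|\geq 2$ and $c_i=0$ when $|I|=1$. Since $s_i-p^n=(c_i-1)p^n+in_l$ is strictly increasing in $i$, the Frobenius number is attained at $i=p^n-1$, and it simplifies via $\ceil*{(p^n-1)n_k/p^n}=n_k-\floor*{n_k/p^n}$. That identity holds without any non-integrality assumption, your exploratory floor/remainder rewriting is not needed, and you rightly write $\min\{p^n,n_l\}$ where the paper's proof has $n_0$.
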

\begin{proof}
	If $|I|\geq 2$, then $\sum_{k\in I}\ceil*{\frac{in_k}{p^n}}d_k - \ceil*{\frac{in_l}{p^n}}\geq 1$ for all $1\leq i\leq p^n-1$. It follows from Corollary \ref{semigroup} that  $m_{H(Q_l)} = p^n$. 
	If $|I| = 1$, then $\sum_{k\in I}\ceil*{\frac{in_k}{p^n}}d_k - \ceil*{\frac{in_l}{p^n}} = 0$ for all $1\leq i\leq p^n-1$. By Corollary \ref{semigroup}, we have $m_{H(Q_l)} = \min\{p^n,n_0\}$.
    
    Note that $\sum_{k\in I}\ceil*{\frac{in_k}{p^n}}d_k - \ceil*{\frac{in_l}{p^n}}-1$ is increasing with respect to $i$.
	From the definition $F_{H(Q_l)}$ and \eqref{description2}, we get that 
	\begin{align*}
		F_{H(Q_l)} &= \left(\sum_{k\in I}\ceil*{\frac{(p^n-1)n_k}{p^n}}d_k - \ceil*{\frac{(p^n-1)n_l}{p^n}}-1\right)p^n  + (p^n-1)n_l\\
		& = \left(\sum_{k\in I}n_k d_k - \sum_{k\in I}\floor*{\frac{n_k}{p^n}}d_k - n_l +  \floor*{\frac{n_l}{p^n}}-1\right)p^n + (p^n-1)n_l\\
		& = \left(m - \sum_{k\in I}\floor*{\frac{n_k}{p^n}}d_k + \floor*{\frac{n_l}{p^n}}-1\right)p^n - n_l.
	\end{align*}
\end{proof}

For a totally ramified place $Q_l$ of degree one in $F/K(x)$, now we establish a necessary and sufficient condition for $H(Q_l)$ to be symmetric.
\begin{theorem}\label{sym_thm}
	Suppose that $l\in I$ and $d_l=1$. Then $H(Q_l)$ is symmetric if and only if $n_k \equiv -1 \pmod {p^n}$ for all $k\in I \setminus \{l\}$.
\end{theorem}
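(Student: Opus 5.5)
We compare the Frobenius number from Corollary \ref{m_and_F} with $2g-1$. By the genus formula, $2g-1=(p^n-1)(m+\sum_{k\in I}d_k-2)-1$. Write each $n_k=p^n\floor*{\frac{n_k}{p^n}}+r_k$ with $0\leq r_k\leq p^n-1$. Note that $r_k\ge 1$ for all $k\in I$, since $\gcd(n_k,p)=1$. Then $m-\sum_{k\in I}\floor*{\frac{n_k}{p^n}}d_k=\sum_{k\in I}\frac{r_kd_k}{p^n}\cdot$ (more precisely $p^n\bigl(m-\sum_k\floor*{\frac{n_k}{p^n}}d_k\bigr)=m p^n-\sum_k (n_k-r_k)d_k$). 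Substituting, we get
$$F_{H(Q_l)}=mp^n-\sum_{k\in I}(n_k-r_k)d_k+(n_l-r_l)-p^n-n_l=(p^n-1)m+\sum_{k\in I}r_kd_k-r_l-p^n,$$
using $\sum_k n_kd_k=m$ and $d_l=1$.

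Next we compute the difference with $2g-1$:
$$2g-1-F_{H(Q_l)}=(p^n-1)\Bigl(\sum_{k\in I}d_k-2\Bigr)-1-\sum_{k\in I}r_kd_k+r_l+p^n.$$
Using $d_l=1$ and separating the term $k=l$, this becomes
$$\sum_{k\in I,k\neq l}(p^n-1-r_k)d_k+(p^n-1)(1-2)-1+p^n=\sum_{k\in I,k\neq l}(p^n-1-r_k)d_k.$$
(The constant part is $-(p^n-1)-1+p^n=0$, and the contribution $-r_l+r_l$ cancels.)

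Each summand $(p^n-1-r_k)d_k$ is nonnegative, since $r_k\leq p^n-1$ and $d_k\geq 1$. Hence the sum vanishes if and only if $r_k=p^n-1$, that is $n_k\equiv -1\pmod{p^n}$, for every $k\in I\setminus\{l\}$. This proves the equivalence; the case $|I|=1$ is covered automatically, since then the condition is vacuous and the sum is empty.

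The main step requiring care is the bookkeeping in simplifying $F_{H(Q_l)}$, in particular correctly using $d_l=1$ and $m=\sum_{k\in I}n_kd_k$. We would double-check it on a small example, such as a Hermitian-type case.
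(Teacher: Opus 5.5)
Your proof is correct and follows essentially the same route as the paper. You substitute the Frobenius number from Corollary~\ref{m_and_F} into $2g-1-F_{H(Q_l)}$, use $d_l=1$ and $m=\sum_{k\in I}n_kd_k$ to reduce it to $\sum_{k\in I\setminus\{l\}}(p^n-1-r_k)d_k$, and observe that each term is nonnegative and vanishes exactly when $n_k\equiv -1\pmod{p^n}$.

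Your summand $p^n-1-r_k=p^n\lfloor n_k/p^n\rfloor+p^n-1-n_k$ is in fact the right one. The paper's display writes ceilings where floors are needed, which would make every term at least $p^n-1$ and so never zero; the paper's own Section~5.1 example ($g=14$, $F_{H(Q_l)}=15$) confirms your value $12$ for $2g-1-F_{H(Q_l)}$. Do delete the garbled false start $m-\sum_{k}\lfloor n_k/p^n\rfloor d_k=\sum_k r_kd_k/p^n$: it is wrong as written, and your computation does not use it.
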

\begin{proof}
	It follows from Corollary \ref{m_and_F} that 
	$$F_{H(Q_l)} =  \left(m - \sum_{k\in I}\floor*{\frac{n_k}{p^n}}d_k + \floor*{\frac{n_l}{p^n}}-1\right)p^n - n_l.$$
	Since $2g-1\geq F_{H(Q_l)}$, we have 
	\begin{align*}
		&~2g-1 - F_{H(Q_l)}\\
		=&~(p^n-1)\left(m + \sum_{k\in I}d_k - 2\right) - 1 - \left(m - \sum_{k\in I}\floor*{\frac{n_k}{p^n}}d_k + \floor*{\frac{n_l}{p^n}}-1\right)p^n + n_l\\
		=&~mp^n - m + (p^n-1)\sum_{k\in I}d_k - 2p^n + 1 -mp^n + p^n \sum_{k\in I} \ceil*{\frac{n_k}{p^n}}d_k - p^n\ceil*{\frac{n_l}{p^n}} + p^n + n_l\\
		=&~\sum_{k\in I}\left(p^n\ceil*{\frac{n_k}{p^n}} + p^n-1-n_k\right)d_k - \left(p^n\ceil*{\frac{n_l}{p^n}} +  p^n-1-  n_l\right)\\
		=&~\sum_{k\in I,k\neq l}\left(p^n\ceil*{\frac{n_k}{p^n}} + p^n-1-n_k\right)d_k \geq 0.
	\end{align*}
	Note that the equality holds if and only if 
	$$p^n\ceil*{\frac{n_k}{p^n}} + p^n-1-n_k = 0 \text{ for all } k\in I\setminus\{l\},$$
	which is equivalent to  $n_k \equiv -1 \pmod {p^n}$ for all $k\in I \setminus \{l\}$.
	Thus $H(Q_l)$ is symmetric if and only if $n_k \equiv -1 \pmod {p^n}$ for all $k\in I \setminus \{l\}$.
\end{proof}

\section{The minimal generating sets of Weierstrass semigroups on linearized function fields}\label{section4}
In this section, we restrict our attention to the minimal generating sets of Weierstrass semigroups at several totally ramified places of degree one in $F/K(x)$ defined by the equation \eqref{equation1}.
Suppose that $v:=\min\{|I|, |K|\}$ and that $\{l_1,\dots,l_{|I|}\}$ is a permutation of the set $I$. We present our main results as follows.
\begin{theorem}\label{thm_minimal_set}
	Let $\lambda\in\mathbb{Z}$ with $\gcd(p,\lambda)=1$. Suppose that $2\leq t\leq v$ and $d_{l_k} = 1$ for all $1\leq k\leq t$. Then
	\begin{align*}
		\Gamma(Q_{l_1},\dots, Q_{l_t}) = \Bigg\{
		(j_1p^n-i\lambda n_{l_1},\dots, j_tp^n-i\lambda n_{l_t})\in \mathbb{N}^{t} ~\Big| ~1\leq i \leq p^n-1 , \\
		 j_k\geq \ceil*{\frac{i\lambda n_{l_k}}{p^n}} \text{ for } 1\leq k\leq t,~ \sum_{k=1}^{t}j_k= m\ceil*{\frac{i\lambda}{p^n}}-\sum_{k\in I}\floor*{\frac{i\lambda n_k}{p^n}}d_k + \sum_{k=1}^{t}\floor*{\frac{i\lambda n_{l_k}}{p^n}}\Bigg\}.
		\end{align*}
	Moreover, for each $(j_1p^n-i\lambda n_{l_1},\dots, j_tp^n - i\lambda n_{l_t})\in \Gamma(Q_{l_1},\dots,Q_{l_t})$, let 
	$$z = y^{-i\lambda}\left(\prod_{k\in J}q_k(x)^{m_k\ceil*{\frac{i\lambda}{p^n}}}\right)\left(\prod_{k=2}^{p^n}\frac{y}{y-\beta_k}\right)^{\ceil*{\frac{i\lambda}{p^n}}},$$
	then
	\begin{align*}
		\left(\left(\prod_{k=1}^{t}p_{l_k}(x)^{-j_k}\right)\left(\prod_{k=t+1}^{|I|}p_{l_k}(x)^{-\floor*{\frac{i\lambda n_{l_k}}{p^n}}}\right)z\right)_\infty = \sum_{k=1}^{t} (j_kp^n-i\lambda n_{l_k}) Q_{l_k}.
	\end{align*}
	In addition, we have $\Gamma(Q_{l_1},\dots,Q_{l_t}) \neq \varnothing$.
\end{theorem}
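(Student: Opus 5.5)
The plan is to work entirely with the dimension formula of Corollary \ref{D_dim} and the discrepancy criterion of Lemma \ref{discrepancy}, together with an explicit divisor computation for the proposed functions. Set $A=\sum_{k=1}^t a_kQ_{l_k}$. By Corollary \ref{D_dim}, $\ell(A)$ is a sum over $u\in\{0,\dots,p^n-1\}$ of terms
\[
\max\Bigl\{0,\ \textstyle\sum_{k\le t}\lfloor (a_k-un_{l_k})/p^n\rfloor + \sum_{k>t}\lfloor -un_{l_k}/p^n\rfloor+1\Bigr\}.
\]
Lowering $a_k$ by one changes only the summand indexed by the unique $u$ with $a_k\equiv un_{l_k}\pmod{p^n}$.

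\textbf{Step 1: the divisor of the candidate function.} Using \eqref{divisor1}--\eqref{divisor3}, I compute the divisor of $z$ and then of the full product
\[
w=\Bigl(\textstyle\prod_{k\le t}p_{l_k}^{-j_k}\Bigr)\Bigl(\textstyle\prod_{k>t}p_{l_k}^{-\lfloor i\lambda n_{l_k}/p^n\rfloor}\Bigr)z .
\]
Write $c=\lceil i\lambda/p^n\rceil$. The factor $\bigl(y^{p^n}/\prod_{k\ge2}(y-\beta_k)\bigr)^{c}$ has divisor
\[
c\Bigl(p^n\textstyle\sum_{j\in J}m_jR_{j1}-\sum_{k\ge 1}\sum_{j\in J}m_jR_{jk}\Bigr).
\]
Combined with $y^{-i\lambda}$ and $\prod_{j\in J} q_j^{m_jc}$, the coefficients are as follows.
\begin{itemize}
\item At $R_{jk}$ with $k\ne1$: $0$.
\item At $R_{j1}$: $m_j(p^nc-i\lambda)\ge0$.
\item At $Q_{l_k}$ for $k\le t$: $i\lambda n_{l_k}-j_kp^n$.
\item At $Q_{l_k}$ for $k>t$: $i\lambda n_{l_k}-p^n\lfloor i\lambda n_{l_k}/p^n\rfloor\ge0$.
\item At the places over $P_0$ (when $0\notin I$): $D_0$ picks up degree $\sum_{k\in J}m_ke_kc-\sum_{k\in I}(\cdot)d_k$.
\end{itemize}
The last coefficient vanishes exactly when $\sum_k j_k$ equals the prescribed value. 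If $0\in I$, the same identity arises from the $Q_0$ coefficient, and I must check that case with $d_0=1$. Hence $(w)_\infty=\sum_k(j_kp^n-i\lambda n_{l_k})Q_{l_k}$ when each entry is positive. So every tuple in the described set, call it $\Gamma$, lies in $H(Q_{l_1},\dots,Q_{l_t})$.

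\textbf{Step 2: minimality.} By Lemma \ref{Gamma_cross} and Proposition \ref{gap_set}, each coordinate $j_kp^n-i\lambda n_{l_k}$ should be a gap at $Q_{l_k}$. Indeed the constraint on $\sum j_k$ together with $j_{k'}\ge\lceil i\lambda n_{l_{k'}}/p^n\rceil$ for $k'\neq k$ forces $j_k$ into the range of Proposition \ref{gap_set}; since $\lceil\cdot\rceil-\lfloor\cdot\rfloor=1$ for non-integers, the bound is exactly $-1$ below the upper limit. To verify the criterion of Lemma \ref{discrepancy}, note that all coordinates are $\equiv -i\lambda n_{l_k}$, so they share the same residue index $u\equiv -i\lambda$. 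Thus $\ell(A)-\ell(A-Q_{l_k})$ and $\ell(A)-\ell(A-Q_{l_k}-Q_{l_{k'}})$ are both controlled by the single summand at this $u$. That summand equals $\max\{0,\sum_k j_k-(\text{prescribed})+1\}$. At the prescribed sum it is $1$; after subtracting one or two places it drops to $0$ either way, since it cannot go below $0$. So each difference equals $1$, giving $\Gamma\subseteq\Gamma(Q_{l_1},\dots,Q_{l_t})$.

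\textbf{Step 3: the reverse inclusion.} Take $\mathbf a\in\Gamma(Q_{l_1},\dots,Q_{l_t})$. First I show all $a_k$ have a common residue index $u$. If indices $u_k\ne u_{k'}$ differed, lowering $a_k$ and $a_{k'}$ would affect different summands, and $\ell(A-Q_{l_k}-Q_{l_{k'}})=\ell(A)-2$ would contradict Lemma \ref{discrepancy}. Write $u=p^n-\bar\imath$ with $\bar\imath$ matched to some $1\le i\le p^n-1$ via $u\equiv-i\lambda$; here $u\neq0$ because coordinates are gaps, hence nonzero mod $p^n$ multiples. Then $a_k=j_kp^n-i\lambda n_{l_k}$. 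The drop condition forces the $u$-summand to be exactly $1$ at $A$ and $0$ at $A-Q_{l_k}$, which pins $\sum j_k$ to the prescribed value. The bounds $j_k\ge\lceil i\lambda n_{l_k}/p^n\rceil$ come from $a_k>0$ (as a gap), using $\gcd$ conditions to exclude equality.

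\textbf{Step 4: nonemptiness, and the main obstacle.} For nonemptiness, pick $i$ and distribute the required excess $\sum j_k-\sum\lceil\cdot\rceil$, which must be $\ge0$; this mirrors the $|I|\ge2$ argument in Corollary \ref{m_and_F}, using $t\ge2$. I expect the main obstacle to be the common-residue argument in Step 3 and its sign bookkeeping. In particular, handling the $0\in I$ versus $0\notin I$ cases, and the $m_0$ conventions, uniformly in Step 1 will require care.
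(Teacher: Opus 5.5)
Your proposal is correct. For the set equality it takes a genuinely different and shorter route than the paper.

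\textbf{The paper's route.} The case $t=2$ is settled with Homma's bijection criterion (Lemma \ref{minimal_generating_two}). Each inclusion is then proved by induction on $t$. The inclusion $\Gamma_\lambda(Q_{l_1},\dots,Q_{l_t})\subseteq\Gamma(Q_{l_1},\dots,Q_{l_t})$ comes from merging the last two coordinates into a $(t-1)$-tuple and invoking Proposition \ref{minimal_for_all}. The reverse inclusion goes through Matthews' lub description (Theorem \ref{lub_semi_group}). The dimension formula of Corollary \ref{D_dim} enters only through Lemma \ref{i_is_same}, which is exactly your common-residue step.

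\textbf{Your route.} You push that same computation one step further. Once every coordinate has the same index $u\equiv -i\lambda$, all conditions of Lemma \ref{discrepancy} concern the single summand $\max\{0,X+1\}$, where $X=\sum_k j_k-N_i$ and $N_i$ is the prescribed sum. The one-place condition is equivalent to $X\ge 0$, and the two-place condition then rules out $X\ge 1$. Conversely, $X=0$ gives all the conditions. This yields both inclusions at once, uniformly for every $t\ge 2$. It needs no induction, no Kim--Homma bijection and no lub theorem. The explicit function of Step 1 is then needed only for the ``Moreover'' clause, since $H$-membership already follows from the dimension formula.

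\textbf{What each buys.} The paper's induction keeps the curve-specific input to Lemma \ref{i_is_same} and the explicit functions; the rest is general semigroup structure in the Matthews/Yang--Hu style. Your approach exploits the closed formula of Corollary \ref{D_dim} fully. It is more economical and shows transparently why $\sum_k j_k$ is pinned down.

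\textbf{Minor corrections.}
\begin{enumerate}
\item The factor in $z$ is $y^{p^n-1}/\prod_{k\ge 2}(y-\beta_k)$, not $y^{p^n}/\prod_{k\ge2}(y-\beta_k)$. The divisor you wrote is the correct one for $y^{p^n-1}$.
\item In Step 3, state explicitly that you use the two-place condition to exclude $X\ge 1$. The single-place drop alone only gives $X\ge 0$.
\item In Step 4, nonnegativity of the excess does not come from $t\ge 2$. It comes from $n_k\lceil i\lambda/p^n\rceil-\lfloor i\lambda n_k/p^n\rfloor\ge 1$ for every $k\in I$, which holds because $n_k>0$ and $i\lambda n_k/p^n\notin\mathbb{Z}$. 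This gives excess $\ge\sum_{k\in I}d_k-t\ge |I|-t\ge 0$, so in fact every $i$ contributes elements.
\end{enumerate}
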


By choosing different $\lambda$, we can derive different expressions for $\Gamma(Q_{l_1},\dots,Q_{l_t})$.
In particular, taking $\lambda = 1$, we immediately obtain the following corollary, which is similar to the expression \eqref{description2}.
\begin{corollary}\label{Gamma_simply}
	Suppose that $2\leq t\leq v$ and $d_{l_k} = 1$ for all $1\leq k\leq t$. Then 
	\begin{align*}
		\Gamma(Q_{l_1},\dots, Q_{l_t}) = \Bigg\{
		(j_1p^n + in_{l_1},\dots, j_tp^n + in_{l_t})\in \mathbb{N}^{t} ~\Big| ~1\leq i \leq p^n-1 , \\
		 j_k\geq \ceil*{\frac{-in_{l_k}}{p^n}} \text{ for } 1\leq k\leq t,~ \sum_{k=1}^{t}j_k= \sum_{k\in I}\ceil*{\frac{i n_k}{p^n}}d_k - \sum_{k=1}^{t}\ceil*{\frac{i n_{l_k}}{p^n}}\Bigg\}.
		\end{align*}
	Moreover, for each $(j_1p^n + in_{l_1},\dots, j_tp^n + in_{l_t})\in \Gamma(Q_{l_1},\dots,Q_{l_t})$,
	\begin{align*}
		\left(\left(\prod_{k=1}^{t}p_{l_k}(x)^{-j_k}\right)\left(\prod_{k=t+1}^{|I|}p_{l_k}(x)^{\ceil*{\frac{i n_{l_k}}{p^n}}}\right)y^i\right)_\infty = \sum_{k=1}^{t} (j_kp^n+i n_{l_k}) Q_{l_k}.
	\end{align*}
\end{corollary}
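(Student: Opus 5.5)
This follows from Theorem \ref{thm_minimal_set} by specializing to $\lambda = 1$, which is allowed since $\gcd(p,1)=1$. I will first translate the set description and then the function.

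\emph{Translating the set.} With $\lambda=1$, the exponent in Theorem \ref{thm_minimal_set} is $i\lambda=i$. The tuples there have the form $(j_kp^n - in_{l_k})_k$, while the corollary writes $j_kp^n + in_{l_k}$. The natural move is to replace $i$ by $p^n-i$, which is a bijection of $\{1,\dots,p^n-1\}$. Then
\[
j_kp^n-(p^n-i)n_{l_k} = (j_k-n_{l_k})p^n + in_{l_k},
\]
so I set $j_k' = j_k - n_{l_k}$. The lower bound $j_k\ge\ceil*{\frac{(p^n-i)n_{l_k}}{p^n}}$ becomes
\[
j_k'\ge \ceil*{\frac{-in_{l_k}}{p^n}}.
\]
For the sum condition, note that $\ceil*{\frac{p^n-i}{p^n}}=1$, so $m\ceil*{\cdot}=m=\sum_{k\in I}n_kd_k$. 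Also $\floor*{\frac{(p^n-i)n_k}{p^n}} = n_k+\floor*{\frac{-in_k}{p^n}} = n_k-\ceil*{\frac{in_k}{p^n}}$ by Lemma \ref{ceil_and_floor}(i). Substituting, the $n_k$ terms cancel against $m$ and against the shift $\sum_k n_{l_k}$. This leaves exactly
\[
\sum_k j_k' = \sum_{k\in I}\ceil*{\frac{in_k}{p^n}}d_k - \sum_{k=1}^t\ceil*{\frac{in_{l_k}}{p^n}}.
\]
This is a straightforward check.

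\emph{Translating the function.} With $\lambda=1$ and exponent $p^n-i$, the function $z$ becomes
\[
y^{-(p^n-i)}\prod_{k\in J}q_k^{m_k}\prod_{k=2}^{p^n}\frac{y}{y-\beta_k}
= y^{i}\,\frac{\prod_{k\in J}q_k^{m_k}}{\prod_{k=1}^{p^n}(y-\beta_k)}.
\]
Since $\prod_k(y-\beta_k)=L(y)/\alpha_n$ and $L(y)=\alpha\prod_j q_j^{m_j}/\prod_{i}p_i^{n_i}$, with the degree-zero factor absorbed via $p_0=q_0=1$, we get
\[
z = c\,y^i\prod_{k\in I}p_k(x)^{n_k},
\]
where $c$ is a nonzero constant. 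The main bookkeeping point, which I expect to be the only real obstacle, is getting the conventions right here. The indices in $I$ versus $J$ capture the sign of $n_0$. When $n_0\neq 0$ we have $p_0(x)=1$, and the factor from $q_0^{m_0}$ and the pole of $x$ must be checked to cancel consistently. It suffices to compare principal divisors using \eqref{divisor1}--\eqref{divisor3} rather than polynomial identities.

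Multiplying by the remaining factors gives
\[
\prod_{k=1}^{t}p_{l_k}^{-j_k}\prod_{k>t}p_{l_k}^{-\floor*{\frac{(p^n-i)n_{l_k}}{p^n}}}\, z
= c\prod_{k\le t}p_{l_k}^{-j_k'}\prod_{k>t}p_{l_k}^{\ceil*{\frac{in_{l_k}}{p^n}}}\, y^i.
\]
Scaling by the constant $c$ does not change the pole divisor, so the pole divisor statement of Theorem \ref{thm_minimal_set} gives the claimed one, with the entries renamed via $j_k'$.
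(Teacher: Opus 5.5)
Your argument is correct and is essentially the paper's own derivation, a specialization of Theorem~\ref{thm_minimal_set}. Your reindexing $i\mapsto p^n-i$, $j_k'=j_k-n_{l_k}$, together with the identity $z=(\alpha_n/\alpha)\,y^{i}\prod_{k\in I}p_k(x)^{n_k}$, correctly turns the $\lambda=1$ instance into the stated form; that identity holds literally in $F$ because $p_0=q_0=1$, so the bookkeeping you worried about is harmless.

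Choosing $\lambda=-1$ instead gives the corollary verbatim, with no reindexing and no use of $L(y)=\alpha_n\prod_k(y-\beta_k)$. Indeed $\ceil*{-i/p^n}=0$ gives $z=y^{i}$, and $-\floor*{-in_{l_k}/p^n}=\ceil*{in_{l_k}/p^n}$. The paper's ``$\lambda=1$'' is best read as $\lambda=-1$, matching how \eqref{description2} is derived.
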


When some of $n_{l_k}$ are equal modulo $p^n$, we can get an expression for $\Gamma(Q_{l_1},\dots,Q_{l_t})$, which is similar to the expression \eqref{description3}.
\begin{corollary}\label{Gamma_some_equal}
	Suppose that $2\leq t\leq v$ and $d_{l_k} = 1$ for all $1\leq k\leq t$. Suppose that $1\leq u\leq t$ and $n_{l_1} \equiv \dots \equiv n_{l_u} \pmod {p^n}$. 
	Let $\lambda\in\mathbb{Z}$ be the inverse of $-n_{l_1}$ modulo $p^n$. Then 
	
	\begin{align*}
		\Gamma(Q_{l_1},\dots, Q_{l_t}) = \Bigg\{
		(j_1p^n+i,\dots,j_up^n + i, j_{u+1}p^n - i\lambda n_{l_{u+1}}, \dots, j_tp^n-i\lambda n_{l_t})\in \mathbb{N}^{t} ~\Big|\\
		1\leq i \leq p^n-1,\, j_k\geq 0 \text{ for } 1\leq k\leq u,\, j_k\geq \ceil*{\frac{i\lambda n_{l_k}}{p^n}} \text{ for } u+1\leq k\leq t,\\
		 \sum_{k=1}^{t}j_k= m\ceil*{\frac{i\lambda}{p^n}}-\sum_{k\in I}\floor*{\frac{i\lambda n_k}{p^n}}d_k - u + \sum_{k=u+1}^{t}\floor*{\frac{i\lambda n_{l_k}}{p^n}}\Bigg\}.
		\end{align*}
	For each $(j_1p^n+i,\dots,j_up^n + i, j_{u+1}p^n - i\lambda n_{l_{u+1}}, \dots, j_tp^n-i\lambda n_{l_t})\in \Gamma(Q_{l_1},\dots,Q_{l_t})$, let 
	$$z = y^{-i\lambda}\left(\prod_{k\in J}q_k(x)^{m_k\ceil*{\frac{i\lambda}{p^n}}}\right)\left(\prod_{k=2}^{p^n}\frac{y}{y-\beta_k}\right)^{\ceil*{\frac{i\lambda}{p^n}}},$$
	then
	\begin{align*}
		&\left(\left(\prod_{k=1}^{t}p_{l_k}(x)^{-j_k}\right)\left(\prod_{k=t+1}^{|I|}p_{l_k}(x)^{-\floor*{\frac{i\lambda n_{l_k}}{p^n}}}\right)z\right)_\infty\\
	=~& \sum_{k=1}^{u} (j_kp^n+i) Q_{l_k} + \sum_{k=u+1}^{t} (j_kp^n-i\lambda n_{l_k}) Q_{l_k}.
	\end{align*}
\end{corollary}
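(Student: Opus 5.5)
The statement to prove is Corollary \ref{Gamma_some_equal}. The plan is to specialize Theorem \ref{thm_minimal_set} to the chosen $\lambda$, the inverse of $-n_{l_1}$ modulo $p^n$. First we check that this is admissible. Since $\gcd(n_{l_1},p)=1$, the inverse exists and $\gcd(\lambda,p)=1$, so Theorem \ref{thm_minimal_set} applies and gives a description of $\Gamma(Q_{l_1},\dots,Q_{l_t})$. It also gives, for each element, the function
\[
\Big(\prod_{k=1}^{t}p_{l_k}(x)^{-j'_k}\Big)\Big(\prod_{k=t+1}^{|I|}p_{l_k}(x)^{-\floor*{\frac{i\lambda n_{l_k}}{p^n}}}\Big)z
\]
together with its pole divisor. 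It then remains only to reparametrize the first $u$ coordinates, exactly as in the passage from the general description in Proposition \ref{gap_set} to \eqref{description3}.

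Fix $1\le k\le u$. Since $n_{l_k}\equiv n_{l_1}\pmod{p^n}$, we have $-\lambda n_{l_k}\equiv 1\pmod{p^n}$, so we can write $-\lambda n_{l_k}=1+b_kp^n$ with $b_k\in\mathbb{Z}$. For $1\le i\le p^n-1$ this gives
\[
\ceil*{\frac{i\lambda n_{l_k}}{p^n}}=-ib_k+\ceil*{\frac{-i}{p^n}}=-ib_k,\qquad \floor*{\frac{i\lambda n_{l_k}}{p^n}}=-ib_k-1 .
\]
The $k$-th coordinate of an element of $\Gamma$ in Theorem \ref{thm_minimal_set} is $j'_kp^n-i\lambda n_{l_k}=(j'_k+ib_k)p^n+i$. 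Setting $j_k:=j'_k+ib_k$, the constraint $j'_k\ge\ceil*{\frac{i\lambda n_{l_k}}{p^n}}=-ib_k$ becomes $j_k\ge 0$. For $k>u$ we keep $j_k=j'_k$ unchanged.

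Next we rewrite the sum condition. In Theorem \ref{thm_minimal_set} it reads
\[
\sum_k j'_k=m\ceil*{\frac{i\lambda}{p^n}}-\sum_{k\in I}\floor*{\frac{i\lambda n_k}{p^n}}d_k+\sum_{k=1}^t\floor*{\frac{i\lambda n_{l_k}}{p^n}} .
\]
Substituting $j'_k=j_k-ib_k$ for $k\le u$ and $\floor*{\frac{i\lambda n_{l_k}}{p^n}}=-ib_k-1$ for $k\le u$, the terms $-ib_k$ cancel on the two sides. What is left is $\sum_k j_k=m\ceil*{\frac{i\lambda}{p^n}}-\sum_{k\in I}\floor*{\frac{i\lambda n_k}{p^n}}d_k-u+\sum_{k=u+1}^{t}\floor*{\frac{i\lambda n_{l_k}}{p^n}}$, which is the stated condition. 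This substitution is a bijection between index tuples, so the two descriptions of the set agree.

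For the functions, the pole divisor formula in Theorem \ref{thm_minimal_set} is stated with exponents $-j'_k$ on $p_{l_k}(x)$. We have to translate it into the new $j_k$, and this is the only step that needs care. Taken literally, the corollary's displayed function uses $p_{l_k}(x)^{-j_k}$ with the new $j_k$. That changes the function by the factor $\prod_{k\le u}p_{l_k}(x)^{-ib_k}$, so the pole orders at $Q_{l_k}$ would shift by $ib_kp^n$ and the zeros or poles at $D_0$ would shift as well. I would therefore first check whether the statement intends $j_k$ in the function to mean the original parameter, which gives the same function as in the theorem. If the new $j_k$ is intended, I would verify the pole divisor directly. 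Using \eqref{divisor1} and \eqref{divisor3}, we compute $v_{Q_{l_k}}$ of the product: for $k\le u$ it is $-j_kp^n-i$, and for $k>u$ it is $-(j_kp^n-i\lambda n_{l_k})$. At the remaining places (over $P_0$ or $S_j$, and at $Q_{l_k}$ for $k>t$) we need nonnegativity. This check reuses the computation from the proof of Theorem \ref{thm_minimal_set}. The sum condition should make the degree balance at $D_0$ come out exactly, since the shift $\sum_{k\le u}ib_k$ in the exponents is compensated by the shifted sum condition. This place-by-place verification is the main obstacle.
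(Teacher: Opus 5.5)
Your handling of the set is exactly the paper's proof. The paper writes $\lambda n_{l_k}=-1+b_kp^n$ and $j_k=j'_k-ib_k$, which is your $b_k$ with the opposite sign, and your checks of $j_k\ge 0$ and of the sum condition are correct. The paper's proof does only this substitution and never addresses the function, so your worry about the second half of the statement is justified. Your fallback plan for it, however, fails. The valuation you assert, $v_{Q_{l_k}}=-j_kp^n-i$ for $k\le u$, is wrong and contradicts your own previous sentence. The displayed function with the new $j_k$ is the function of Theorem \ref{thm_minimal_set} times $\prod_{k\le u}p_{l_k}(x)^{-ib_k}$. Its divisor therefore differs from the theorem's by $-\sum_{k\le u}ib_k\,(p^nQ_{l_k}-D_0)$; indices with $l_k=0$ contribute nothing, since $p_0(x)=1$. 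Concretely, for $l_k\neq 0$,
\[
v_{Q_{l_k}}=-j_kp^n+i\lambda n_{l_k}=-j_kp^n-i-ib_kp^n .
\]
The sum condition cannot compensate. It is exactly the relation that kills the $D_0$-coefficient when the exponents are the $j'_k$, and it plays no role in the local computation at $Q_{l_k}$. So, read literally, the displayed identity is false whenever $b_k\neq 0$ for some $k\le u$ with $l_k\neq 0$.

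For example, take $y^4+y^2+y=x^2(x+1)^2(x^2+x+1)/(x^3+x+1)$ over $\mathbb{F}_8$, the last example of the paper, with $p^n=4$, $n_1=n_2=n_3=1$, $n_0=3$, $m=6$. Choose $t=u=2$, $(l_1,l_2)=(1,2)$, $\lambda=3$ (so $b_1=b_2=-1$ in your convention), $i=1$, and $(5,5)\in\Gamma(Q_1,Q_2)$, i.e.\ $j_1=j_2=1$. The displayed function is $p_1(x)^{-1}p_2(x)^{-1}z$ with $z=y^{-3}x^2(x+1)^2(x^2+x+1)\prod_{k=2}^{4}\frac{y}{y-\beta_k}$. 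Its pole divisor is $Q_1+Q_2+7Q_\infty$, not $5Q_1+5Q_2$. The function with pole divisor $5Q_1+5Q_2$ is the theorem's $p_1(x)^{-2}p_2(x)^{-2}z$, with $j'_k=2$.

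So drop the direct verification and commit to your first branch, stating the correction explicitly. For $1\le k\le u$ the exponent of $p_{l_k}(x)$ must be $-j'_k=-(j_k-ib_k)$. The function is then literally the one in Theorem \ref{thm_minimal_set}, and the theorem gives its pole divisor with no further work:
\[
\sum_{k\le t}(j'_kp^n-i\lambda n_{l_k})Q_{l_k}=\sum_{k\le u}(j_kp^n+i)Q_{l_k}+\sum_{k=u+1}^{t}(j_kp^n-i\lambda n_{l_k})Q_{l_k}.
\]
The formula as printed is correct only when $b_k=0$ for all $k\le u$ with $l_k\neq 0$. An example is $\lambda=-1$ with $n_{l_k}=1$, which is how the corollary is used in Proposition \ref{form2}.
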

\begin{proof}
	It follows from Theorem \ref{thm_minimal_set} that 
	\begin{align*}
		\Gamma(Q_{l_1},\dots, Q_{l_t}) = \Bigg\{
		(j'_1p^n-i\lambda n_{l_1},\dots, j'_tp^n-i\lambda n_{l_t})\in \mathbb{N}^{t} ~\Big| ~1\leq i \leq p^n-1 , \\
		 j'_k\geq \ceil*{\frac{i\lambda n_{l_k}}{p^n}} \text{ for } 1\leq k\leq t,~ \sum_{k=1}^{t}j'_k= m\ceil*{\frac{i\lambda}{p^n}}-\sum_{k\in I}\floor*{\frac{i\lambda n_k}{p^n}}d_k + \sum_{k=1}^{t}\floor*{\frac{i\lambda n_{l_k}}{p^n}}\Bigg\}.
		\end{align*}
		For each $1\leq k\leq u$, let $b_k\in \mathbb{Z}$ such that $\lambda n_{l_k} = -1 + b_kp^n$. Then for each $1\leq i\leq p^n-1$, we have $\ceil*{\frac{i\lambda n_{l_k}}{p^n}} = ib_k$ and $\floor*{\frac{i\lambda n_{l_k}}{p^n}} = ib_k - 1$.
		Let $j_k = j'_k - ib_k$ for $1\leq k\leq u$ and $j_k = j'_k$ for $u+1\leq k\leq t$. Thus we obtain the desired expression.
\end{proof}

We will prove Theorem \ref{thm_minimal_set} by induction. Firstly, we show that Theorem \ref{thm_minimal_set} holds for $t=2$.
\begin{proposition}\label{Gamma_two_places}
	Suppose that $d_{l_1} = d_{l_2} = 1$. Then for all $\lambda\in\mathbb{Z}$ with $\gcd(p,\lambda) = 1$,
	\begin{align*}
		\Gamma (Q_{l_1},Q_{l_2}) = &\Bigg\{  (j_1p^n - i\lambda n_{l_1}, j_2 p^n - i\lambda n_{l_2})\in \mathbb{N}^2 \mid 1\leq i\leq p^n-1,\, j_1\geq \ceil*{\frac{i\lambda n_{l_1}}{p^n}},\\
		& j_2\geq \ceil*{\frac{i\lambda n_{l_2}}{p^n}},\, j_1+j_2 = m\ceil*{\frac{i\lambda}{p^n}} - \sum_{k\in I}\floor*{\frac{i\lambda n_{l_k}}{p^n}}d_k + \floor*{\frac{i\lambda n_{l_1}}{p^n}} + \floor*{\frac{i\lambda n_{l_2}}{p^n}}\Bigg\}.
	\end{align*}
\end{proposition}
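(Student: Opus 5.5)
The plan is to apply Lemma \ref{minimal_generating_two}. Call the displayed set $\Gamma$. Three things must be shown: $\Gamma\subseteq H(Q_{l_1},Q_{l_2})$; $\Gamma\subseteq G(Q_{l_1})\times G(Q_{l_2})$; and $\Gamma$ is the graph of a bijection $G(Q_{l_1})\to G(Q_{l_2})$.

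\emph{Step 1: membership in $H(Q_{l_1},Q_{l_2})$.} Take an element of $\Gamma$ with parameters $i,j_1,j_2$, and let $c=\ceil*{\frac{i\lambda}{p^n}}$. Use the function
$$w=p_{l_1}(x)^{-j_1}p_{l_2}(x)^{-j_2}\prod_{k\in I\setminus\{l_1,l_2\}}p_k(x)^{-\floor*{\frac{i\lambda n_k}{p^n}}}\,z,$$
with $z$ as in Theorem \ref{thm_minimal_set}, and compute its divisor from \eqref{divisor1}--\eqref{divisor3}.
\begin{itemize}
\item Write $y^{-i\lambda}\left(\prod_{k=2}^{p^n}\frac{y}{y-\beta_k}\right)^{c}=y^{cp^n-i\lambda}\prod_{k\ge 2}(y-\beta_k)^{-c}$. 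Since $0\le cp^n-i\lambda<p^n$, this factor has valuation $-(cp^n-i\lambda)n_k+cp^nn_k=i\lambda n_k$ at each $Q_k$.
\item Its zeros and poles at the $R_{jk}$ are cancelled by the factor $\prod_{k\in J} q_k(x)^{m_kc}$.
\item At $D_0$, the degree count uses $m=\sum_{i\in I}n_id_i=\sum_{j\in J}m_je_j$.
\end{itemize}
The result should be $v_{Q_{l_r}}(w)=-(j_rp^n-i\lambda n_{l_r})$ for $r=1,2$ and $v_{Q_k}(w)=i\lambda n_k-p^n\floor*{\frac{i\lambda n_k}{p^n}}\ge 0$ for the other $k\in I$. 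The constraint on $j_1+j_2$ is exactly what makes the valuation at the places over $P_0$ nonnegative. This identity is the one delicate computation, and I expect the main bookkeeping obstacle here, especially when $0\in I$ or $0\in J$.

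\emph{Step 2: membership in the gap sets.} Fix $r\in\{1,2\}$, let $s$ be the other index, and bound $j_r$ using $j_s\ge\ceil*{\frac{i\lambda n_{l_s}}{p^n}}$:
$$j_r\le m\ceil*{\tfrac{i\lambda}{p^n}}-\sum_{k\in I}\floor*{\tfrac{i\lambda n_k}{p^n}}d_k+\floor*{\tfrac{i\lambda n_{l_r}}{p^n}}+\floor*{\tfrac{i\lambda n_{l_s}}{p^n}}-\ceil*{\tfrac{i\lambda n_{l_s}}{p^n}}.$$
The last difference equals $-1$, because $p\nmid i\lambda n_{l_s}$. Together with $j_r\ge\ceil*{\frac{i\lambda n_{l_r}}{p^n}}$, Proposition \ref{gap_set} (same $\lambda$) gives $j_rp^n-i\lambda n_{l_r}\in G(Q_{l_r})$. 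It also gives positivity, so the entries lie in $\mathbb{N}$.

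\emph{Step 3: bijection.} For fixed $i$, Proposition \ref{gap_set} says the gaps of $Q_{l_1}$ with residue class $-i\lambda n_{l_1}$ modulo $p^n$ have $j_1$ ranging over an interval $[A_1,B_1]$. Similarly, the gaps of $Q_{l_2}$ in class $-i\lambda n_{l_2}$ have $j_2\in[A_2,B_2]$. From Step 2, $B_1-A_1=B_2-A_2$, and the map $j_1\mapsto j_2=S-j_1$, where $S$ is the prescribed sum, sends $[A_1,B_1]$ bijectively onto $[A_2,B_2]$.
\begin{itemize}
\item Distinct $i$ give distinct residue classes, since $\gcd(\lambda n_{l_r},p)=1$.
\item Hence $\Gamma$ is the graph of a bijection $G(Q_{l_1})\to G(Q_{l_2})$.
\end{itemize}
Lemma \ref{minimal_generating_two} then yields $\Gamma=\Gamma(Q_{l_1},Q_{l_2})$.
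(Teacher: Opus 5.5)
Your proposal is correct and follows the paper's proof essentially step for step. It uses the same function (with the same $z$) and divisor computation for $\Gamma\subseteq H(Q_{l_1},Q_{l_2})$, the same bound on $j_r$ via Proposition \ref{gap_set} for $\Gamma\subseteq G(Q_{l_1})\times G(Q_{l_2})$, and the same graph-of-a-bijection argument closed by Lemma \ref{minimal_generating_two}; your interval reflection $j_1\mapsto S-j_1$ of $[A_1,B_1]$ onto $[A_2,B_2]$ usefully spells out what the paper only asserts.

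Two small slips should be fixed. First, $\floor*{\cdot}-\ceil*{\cdot}=-1$ holds because $p^n\nmid i\lambda n_{l_s}$; your stated reason $p\nmid i\lambda n_{l_s}$ fails when $n\geq 2$ and $p\mid i$. Second, since $\beta_1=0$, your rewriting of the $y$-part of $z$ should read $y^{cp^n-i\lambda}\prod_{k\geq 1}(y-\beta_k)^{-c}$, equivalently $y^{c(p^n-1)-i\lambda}\prod_{k\geq 2}(y-\beta_k)^{-c}$, which is what your valuation count actually uses.
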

\begin{proof}
	Define the set
	\begin{align*}
		\Gamma = &\Bigg\{  (j_1p^n - i\lambda n_{l_1}, j_2 p^n - i\lambda n_{l_2})\in \mathbb{N}^2 \mid 1\leq i\leq p^n-1,\, j_1\geq \ceil*{\frac{i\lambda n_{l_1}}{p^n}},\\
		& j_2\geq \ceil*{\frac{i\lambda n_{l_2}}{p^n}},\, j_1+j_2 = m\ceil*{\frac{i\lambda}{p^n}} - \sum_{k\in I}\floor*{\frac{i\lambda n_{l_k}}{p^n}}d_k + \floor*{\frac{i\lambda n_{l_1}}{p^n}} + \floor*{\frac{i\lambda n_{l_2}}{p^n}}\Bigg\}.
	\end{align*}
	Let $(j_1p^n - i\lambda n_{l_1}, j_2p^n - i\lambda n_{l_2})\in \Gamma$ and $z = y^{-i\lambda}\left(\prod_{k\in J}q_k(x)^{m_k\ceil*{\frac{i\lambda}{p^n}}}\right)\left(\prod_{k=2}^{p^n}\frac{y}{y-\beta_k}\right)^{\ceil*{\frac{i\lambda}{p^n}}}$.
	By the divisors (\ref{divisor1}), (\ref{divisor2}) and (\ref{divisor3}), we have the following principal divisor
	{\allowdisplaybreaks
	\begin{align*}
		&\left(p_{l_1}(x)^{-j_1}p_{l_2}(x)^{-j_2}\left(\prod_{k=3}^{|I|}p_{l_k}(x)^{-\floor*{\frac{i\lambda n_{l_k}}{p^n}}}\right)z\right)_F\\
		=~& -j_1 p^n Q_{l_1} + j_1 D_0 - j_2 p^n Q_{l_2} + j_2 D_0  - \sum_{k=3}^{|I|} p^n \floor*{\frac{i\lambda n_{l_k}}{p^n}} Q_{l_k} + \sum_{k=3}^{|I|}\floor*{\frac{i\lambda n_{l_k}}{p^n}}d_{l_k} D_0\\
		 ~& -i\lambda \sum_{k\in J} m_k R_{k1} + i\lambda n_{l_1} Q_{l_1} + i\lambda n_{l_2} Q_{l_2} + i\lambda \sum_{k=3}^{|I|} n_{l_k} Q_{l_k}\\
		 ~& + \ceil*{\frac{i\lambda}{p^n}} \sum_{k\in J}m_k\sum_{t=1}^{p^n} R_{kt} - \ceil*{\frac{i\lambda}{p^n}}\sum_{k\in J}m_k e_k D_0\\
		 ~& + \ceil*{\frac{i\lambda}{p^n}}(p^n-1)\sum_{k\in J}m_kR_{k1} - \ceil*{\frac{i\lambda}{p^n}}\sum_{t=2}^{p^n}\sum_{k\in J} m_k R_{kt}\\
		=~& \sum_{k\in J}\left(m_kp^n\ceil*{\frac{i\lambda}{p^n}} - i\lambda m_k\right)R_{k1} + \sum_{k=3}^{|I|}\left(i\lambda n_{l_k} - p^n\floor*{\frac{i\lambda n_{l_k}}{p^n}}\right)Q_{l_k}\\
		 ~& -(j_1p^n - i\lambda n_{l_1}) Q_{l_1} - (j_2p^n - i\lambda n_{l_2}) Q_{l_2}.
	\end{align*}}
	Since $m_kp^n\ceil*{\frac{i\lambda}{p^n}} - i\lambda m_k\geq 0$ for all $k\in J$ and $i\lambda n_{l_k} - p^n\floor*{\frac{i\lambda n_{l_k}}{p^n}}\geq 0$ for all $3\leq k \leq |I|$,
	we get $(j_1p^n - i\lambda n_{l_1}, j_2p^n - i\lambda n_{l_2})\in H(Q_{l_1}, Q_{l_2})$. This implies that $\Gamma\subseteq H(Q_{l_1},Q_{l_2})$.

		On the other hand, since $j_1\geq \ceil*{\frac{i\lambda n_{l_1}}{p^n}}$, $j_2\geq \ceil*{\frac{i\lambda n_{l_2}}{p^n}}$, and 
		$$j_1 + j_2 =  m\ceil*{\frac{i\lambda}{p^n}} - \sum_{k\in I}\floor*{\frac{i\lambda n_{l_k}}{p^n}}d_k + \floor*{\frac{i\lambda n_{l_1}}{p^n}} + \floor*{\frac{i\lambda n_{l_2}}{p^n}},$$ we have 
	\begin{align*}
		\ceil*{\frac{i\lambda n_{l_1}}{p^n}} \leq j_1 & \leq m\ceil*{\frac{i\lambda}{p^n}} - \sum_{k\in I}\floor*{\frac{i\lambda n_{l_k}}{p^n}}d_k + \floor*{\frac{i\lambda n_{l_1}}{p^n}} + \floor*{\frac{i\lambda n_{l_2}}{p^n}} - \ceil*{\frac{i\lambda n_{l_2}}{p^n}}\\
		& = m\ceil*{\frac{i\lambda}{p^n}} - \sum_{k\in I}\floor*{\frac{i\lambda n_{l_k}}{p^n}}d_k + \floor*{\frac{i\lambda n_{l_1}}{p^n}} - 1.
	\end{align*}
	Similarly, we also have $\ceil*{\frac{i\lambda n_{l_2}}{p^n}}\leq j_2 \leq m\ceil*{\frac{i\lambda}{p^n}} - \sum_{k\in I}\floor*{\frac{i\lambda n_{l_k}}{p^n}}d_k + \floor*{\frac{i\lambda n_{l_2}}{p^n}} - 1$. This conclude that $\Gamma \subseteq G(Q_{l_1})\times G(Q_{l_2})$. Therefore $\Gamma\subseteq (G(Q_{l_1})\times G(Q_{l_2}))\cap H(Q_{l_1},Q_{l_2})$.

	Moreover, the set $\Gamma$ can be seen as the graph of the bijective map $\theta : G(Q_{l_1})\rightarrow G(Q_{l_2})$ given by $\theta(j_1p^n - i\lambda n_{l_1}) = j_2 p^n - i\lambda n_{l_2}$, which defines a permutation $\tau$ of the set $\{1,2,\dots,g\}$.
	It follows from Lemma \ref{minimal_generating_two} that $\Gamma = \Gamma(Q_{l_1},Q_{l_2})$.
\end{proof}

To prove that Theorem \ref{thm_minimal_set} holds for $t\geq 3$, we need the following technical lemma.
\begin{lemma}\label{i_is_same}
	Suppose that $2\leq t\leq v$ and $d_{l_k} = 1$ for all $1\leq k\leq t$. Let $\lambda\in\mathbb{Z}$ with $\gcd (p,\lambda) = 1$, and let $\mathbf{a}= (a_1,\dots,a_t)\in \Gamma(Q_{l_1},\dots,Q_{l_t})$.
	Suppose that $$\mathbf{a} = (j_1p^n- i_1\lambda n_{l_1},\dots,j_tp^n - i_t\lambda n_{l_t}),$$
	where $ 1 \leq i_k \leq p^n-1 $ and $ j_k \geq \ceil*{\frac{i_k\lambda n_{l_k}}{p^n}} $ for $1 \leq k \leq t$.
	Then $i_1 = i_k$ for all $2\leq k\leq t$.
\end{lemma}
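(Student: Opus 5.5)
Since $\mathbf{a}\in H(Q_{l_1},\dots,Q_{l_t})$, there is $z\in F$ with $(z)_\infty=\sum_{k=1}^t a_kQ_{l_k}$. The aim is to show that $z$ is forced to have a single dominant monomial $z_iy^i$ in the decomposition of Theorem \ref{LD_oplus}, and that this one index $i$ controls the residue of every $a_k$ modulo $p^n$.

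\textbf{Step 1 (reduce to one monomial).} Set $A=\sum_k a_kQ_{l_k}$. By Theorem \ref{LD_oplus}, write $z=\sum_{u=0}^{p^n-1}z_uy^u$ with each $z_uy^u\in\mathcal{L}(A)$. In the proof of Theorem \ref{LD_oplus} we saw that, at each $Q_{l_k}$, the valuations $v_{Q_{l_k}}(z_uy^u)=p^nv_{P_{l_k}}(z_u)-un_{l_k}$ are pairwise distinct modulo $p^n$. Hence $v_{Q_{l_k}}(z)=\min_u v_{Q_{l_k}}(z_uy^u)=-a_k$. For each $k$ this minimum is attained by a unique index $u_k$, determined by $u_kn_{l_k}\equiv a_k\pmod{p^n}$.

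Now write $a_k=j_kp^n-i_k\lambda n_{l_k}$. Then $u_k\equiv -i_k\lambda\pmod{p^n}$, so it suffices to show $u_1=u_k$ for all $k$. Since $\gcd(\lambda,p)=1$ and $1\le i_k\le p^n-1$, the equality of the $u_k$ gives equality of the $i_k$.

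\textbf{Step 2 (use minimality).} Suppose $u_1\neq u_k$ for some $k$. Let $w=z_{u_1}y^{u_1}\in\mathcal{L}(A)$. Then $v_{Q_{l_1}}(w)=-a_1$ and $v_{Q_{l_k}}(w)>-a_k$. So $(w)_\infty=\sum_h b_hQ_{l_h}$ with $\mathbf{b}\preceq\mathbf{a}$, $b_1=a_1$, and $b_k<a_k$. Thus $\mathbf{b}\in H(Q_{l_1},\dots,Q_{l_t})$ and $\mathbf{b}\neq\mathbf{a}$, which contradicts the minimality of $\mathbf{a}$ in $\{\mathbf{b}\in H\mid b_1=a_1\}$ coming from the definition of $\Gamma$. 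Proposition \ref{minimal_for_all} guarantees that minimality holds with respect to coordinate $1$.

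One degenerate case needs care. If $u_1=0$, then $a_1\equiv 0\pmod{p^n}$. But $a_1\in G(Q_{l_1})$ by Lemma \ref{Gamma_cross}, and by \eqref{description2} every gap is $\not\equiv0\pmod{p^n}$, so this cannot occur. Also, $w$ has no poles outside $\{Q_{l_h}\}$, since $w\in\mathcal{L}(A)$.

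\textbf{Main obstacle.} The delicate point is Step 1: making precise that at each $Q_{l_k}$ the minimum valuation is attained by exactly one monomial, and translating $u_k$ back into $i_k$. This rests on the distinct-residue argument from Theorem \ref{LD_oplus}. Once that is in place, the minimality contradiction in Step 2 is immediate.
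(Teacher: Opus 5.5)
Your argument is correct, but it takes a different route from the paper.

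\textbf{The paper's route.} The paper never works with a specific function. It combines the dimension characterization of $\Gamma$ in Lemma~\ref{discrepancy}, namely $\ell(A)=\ell(A-Q_{l_1})+1=\ell(A-Q_{l_2})+1=\ell(A-Q_{l_1}-Q_{l_2})+1$, with the explicit formula of Corollary~\ref{D_dim}. Passing from $A$ to $A-Q_{l_1}$ (resp.\ $A-Q_{l_2}$) can only change the summand indexed by the residue $u_1\equiv -i_1\lambda$ (resp.\ $u_2\equiv -i_2\lambda$). If $u_1\neq u_2$, this leads to contradictory sign conditions on the $u_2$-summand.

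\textbf{Your route.} You take a function $z$ realizing $\mathbf{a}$ and split it via Theorem~\ref{LD_oplus} into monomials $z_uy^u\in\mathcal{L}(A)$. Because their valuations have distinct residues, the pole at each $Q_{l_k}$ is carried by a single monomial. If two places were carried by different monomials, $z_{u_1}y^{u_1}$ would give an element of $H$ strictly below $\mathbf{a}$ with the same first coordinate, directly contradicting minimality.

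\textbf{Comparison.} The two proofs are the same idea seen from two sides, since the summands in Corollary~\ref{D_dim} are exactly the dimensions of the monomial pieces. Yours bypasses Lemma~\ref{discrepancy} and all the floor-function bookkeeping, using only the definition of $\Gamma$ and Proposition~\ref{minimal_for_all}. It also yields slightly more: once $u_1=\dots=u_t$, the monomial $z_{u_1}y^{u_1}$ itself has pole divisor exactly $A$. So every element of $\Gamma$ is realized by a single term $z_uy^u$ with $z_u\in K(x)$. The paper's version has the advantage of staying inside the purely numerical framework of Corollary~\ref{D_dim} and Lemma~\ref{one_gap} used throughout Section~\ref{section3}.

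\textbf{A minor remark.} Your ``degenerate case'' is unnecessary. Since $\lambda n_{l_k}$ is a unit modulo $p^n$ and $1\le i_k\le p^n-1$, the hypotheses already give $a_k\not\equiv 0\pmod{p^n}$ and $a_k\geq 0$, hence $a_k\geq 1$. That is what you actually need for $v_{Q_{l_k}}(z)=-a_k$, and Step 2 never uses $u_1\neq 0$.
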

\begin{proof}
	Suppose that $i_1\neq i_k$ for some $1\leq k\leq t$. Without loss of generality, we assume that $k = 2$. By Lemma \ref{discrepancy}, we have that 
	\begin{equation}\label{dim1}
		\ell \left(\sum_{k=1}^{t}a_k Q_{l_k}\right) = \ell\left(\sum_{k=1}^{t} a_k Q_{l_k} - Q_{l_1}\right) + 1,
	\end{equation}
	\begin{equation}\label{dim2}
		\ell \left(\sum_{k=1}^{t}a_k Q_{l_k}\right) = \ell\left(\sum_{k=1}^{t} a_k Q_{l_k} - Q_{l_2}\right) + 1,
	\end{equation}
	\begin{equation}\label{dim3}
		\ell \left(\sum_{k=1}^{t}a_k Q_{l_k}\right) = \ell\left(\sum_{k=1}^{t} a_k Q_{l_k} -Q_{l_1}- Q_{l_2}\right) + 1.
	\end{equation}
	
	Let $0\leq u_1\leq p^n-1$. Note that $j_1p^n - i_1\lambda n_{l_1} - u_1n_{l_1} \equiv 0 \pmod {p^n}$ if and only if $u_1 \equiv -i_1\lambda \pmod {p^n}$.
	We have $\floor*{\frac{a_1 - u_1n_{l_1}}{p^n}} = \floor*{\frac{a_1 -1 - u_1n_{l_1}}{p^n}} +1$ if and only if $u_1 \equiv -i_1\lambda \pmod {p^n}$.
	Then by \eqref{dim1} and Corollary \ref{D_dim}, we obtain that
	\begin{equation}\label{dim4}
		\sum_{k=1}^{t}\floor*{\frac{a_k-u_1n_{l_k}}{p^n}}+\sum_{k=t+1}^{|I|}\floor*{\frac{-u_1n_{l_k}}{p^n}}d_{l_k}\geq 0.
	\end{equation}
	
	Let $0\leq u_2\leq p^n-1$. Note that $j_2p^n - i_2\lambda n_{l_2} - u_2 n_{l_2}\equiv 0\pmod {p^n}$ if and only if $u_2\equiv -i_2\lambda \pmod {p^n}$.
	We have $\floor*{\frac{a_2-u_2 n_{l_2}}{p^n}} = \floor*{\frac{a_2-1-u_2 n_{l_2}}{p^n}}+1$ if and only if $u_2\equiv -i_2\lambda \pmod {p^n}$.
	And we have $u_1\neq u_2$ since $i_1\neq i_2$. 
	Then by \eqref{dim3}, \eqref{dim4}, and Corollary \ref{D_dim}, we obtain that
	$$\sum_{k=1}^{t}\floor*{\frac{a_k-u_2n_{l_k}}{p^n}}+\sum_{k=t+1}^{|I|}\floor*{\frac{-u_2n_{l_k}}{p^n}}d_{l_k}< 0.$$
	On the other hand, by (\ref{dim2}) and  Corollary \ref{D_dim}, we get that
	$$\sum_{k=1}^{t}\floor*{\frac{a_k-u_2n_{l_k}}{p^n}}+\sum_{k=t+1}^{|I|}\floor*{\frac{-u_2n_{l_k}}{p^n}}d_{l_k}\geq 0,$$
	which is a contradiction. Thus $i_1 = i_k$ for all $2\leq k\leq t$.
\end{proof}

\begin{definition}
	Let $\lambda\in\mathbb{Z}$ with $\gcd(p,\lambda)=1$. Suppose that $2\leq t\leq v$ and $d_{l_k} = 1$ for all $1\leq k\leq t$.
	Define the set 
	\begin{align*}
		\Gamma_\lambda(Q_{l_1},\dots, Q_{l_t}) = \Bigg\{
		\mathbf{a}_{\mathbf{j},i\lambda} = (j_1p^n-i\lambda n_{l_1},\dots, j_tp^n-i\lambda n_{l_t})\in \mathbb{N}^{t} ~\Big| ~1\leq i \leq p^n-1 , \\
		 j_k\geq \ceil*{\frac{i\lambda n_{l_k}}{p^n}} \text{ for } 1\leq k\leq t,~ \sum_{k=1}^{t}j_k= m\ceil*{\frac{i\lambda}{p^n}}-\sum_{k\in I}\floor*{\frac{i\lambda n_k}{p^n}}d_k + \sum_{k=1}^{t}\floor*{\frac{i\lambda n_{l_k}}{p^n}}\Bigg\}.
		\end{align*}
\end{definition}
Our goal is to show that $\Gamma_\lambda(Q_{l_1},\dots, Q_{l_t}) = \Gamma(Q_{l_1},\dots, Q_{l_t})$ for all $\lambda\in\mathbb{Z}$ with $\gcd(p,\lambda) = 1$.
We proceed in two steps: first, we prove that $\Gamma_\lambda(Q_{l_1},\dots, Q_{l_t}) \subseteq \Gamma(Q_{l_1},\dots, Q_{l_t})$, and second, we show that $\Gamma(Q_{l_1},\dots, Q_{l_t})\subseteq \Gamma_\lambda(Q_{l_1},\dots, Q_{l_t})$.
\begin{proposition}\label{S_subseteq_Gamma}
	Let $\lambda\in\mathbb{Z}$ with $\gcd(p,\lambda)=1$. Suppose that $2\leq t\leq v$ and $d_{l_k} = 1$ for all $1\leq k\leq t$. Then
	$$\Gamma_\lambda(Q_{l_1},\dots,Q_{l_t})\subseteq \Gamma (Q_{l_1},\dots,Q_{l_t}).$$
	Moreover, for each $\mathbf{a}_{\mathbf{j},i\lambda} = (j_1p^n-i\lambda n_{l_1},\dots, j_tp^n - i\lambda n_{l_t})\in \Gamma_\lambda(Q_{l_1},\dots,Q_{l_t})$, let 
	$$z = y^{-i\lambda}\left(\prod_{k\in J}q_k(x)^{m_k\ceil*{\frac{i\lambda}{p^n}}}\right)\left(\prod_{k=2}^{p^n}\frac{y}{y-\beta_k}\right)^{\ceil*{\frac{i\lambda}{p^n}}},$$
	then
	\begin{align*}
		\left(\left(\prod_{k=1}^{t}p_{l_k}(x)^{-j_k}\right)\left(\prod_{k=t+1}^{|I|}p_{l_k}(x)^{-\floor*{\frac{i\lambda n_{l_k}}{p^n}}}\right)z\right)_\infty = \sum_{k=1}^{t} (j_kp^n-i\lambda n_{l_k}) Q_{l_k}.
	\end{align*}
\end{proposition}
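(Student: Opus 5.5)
The plan has two parts. First, exhibit an explicit function showing $\mathbf{a}_{\mathbf{j},i\lambda}\in H(Q_{l_1},\dots,Q_{l_t})$. Second, verify the dimension criterion of Lemma \ref{discrepancy}, which places $\mathbf{a}_{\mathbf{j},i\lambda}$ in $\Gamma(Q_{l_1},\dots,Q_{l_t})$.

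For the first part I will repeat the divisor computation from the proof of Proposition \ref{Gamma_two_places}, now with $t$ factors $p_{l_k}(x)^{-j_k}$. Using \eqref{divisor1}, \eqref{divisor2} and \eqref{divisor3}, the principal divisor of $w:=\bigl(\prod_{k=1}^{t}p_{l_k}(x)^{-j_k}\bigr)\bigl(\prod_{k=t+1}^{|I|}p_{l_k}(x)^{-\floor*{\frac{i\lambda n_{l_k}}{p^n}}}\bigr)z$ splits into four parts:
\begin{itemize}
\item a nonnegative part supported on the $R_{k1}$, with coefficients $m_k(p^n\ceil*{\frac{i\lambda}{p^n}}-i\lambda)\ge 0$;
\item a nonnegative part on $Q_{l_k}$ for $k>t$, with coefficients $i\lambda n_{l_k}-p^n\floor*{\frac{i\lambda n_{l_k}}{p^n}}$;
\item the terms $-(j_kp^n-i\lambda n_{l_k})Q_{l_k}$ for $k\le t$;
\item a multiple of $D_0$, with coefficient $\sum_{k\le t}j_k+\sum_{k>t}\floor*{\frac{i\lambda n_{l_k}}{p^n}}d_{l_k}-m\ceil*{\frac{i\lambda}{p^n}}$.
\end{itemize}
The defining relation for $\sum_k j_k$ in $\Gamma_\lambda$ makes this last coefficient zero, using $d_{l_k}=1$ for $k\le t$. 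The condition $j_k\ge\ceil*{\frac{i\lambda n_{l_k}}{p^n}}$, together with $p\nmid i\lambda n_{l_k}$, gives $j_kp^n-i\lambda n_{l_k}>0$. Hence the pole divisor of $w$ is exactly $\sum_{k\le t}(j_kp^n-i\lambda n_{l_k})Q_{l_k}$, which proves both the membership in $H$ and the ``moreover'' assertion.

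For the second part, set $A=\sum_{k}a_kQ_{l_k}$. By Lemma \ref{discrepancy} I must show $\ell(A-Q_{l_a}-Q_{l_b})=\ell(A)-1$ for every pair $a\neq b$. Since $\mathbf{a}\in H$ already gives $\ell(A-Q_{l_a})=\ell(A)-1$, the condition reduces to $\ell(A-Q_{l_a}-Q_{l_b})=\ell(A-Q_{l_a})$.

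I will compute these dimensions with Corollary \ref{D_dim}, summand by summand in $u\in\{0,\dots,p^n-1\}$. Lowering $a_a$ by one changes only the summand with $u\equiv -i\lambda\pmod{p^n}$, call it $u_0$; the analogous statement holds for $a_b$, and it is the same $u_0$ because both coordinates use the same $i$. Writing $-i\lambda=p^n\floor*{\frac{-i\lambda}{p^n}}+u_0$ and computing as in the proof of Proposition \ref{gap_set}, the inner expression at $u_0$ for $A$ should come out as
\[
\sum_{k\le t}j_k-m\ceil*{\frac{i\lambda}{p^n}}+\sum_{k\in I}\floor*{\frac{i\lambda n_k}{p^n}}d_k-\sum_{k\le t}\floor*{\frac{i\lambda n_{l_k}}{p^n}}=0 .
\]
So the $u_0$-summand equals $1$ for $A$, drops to $0$ (the $\max$ with $0$) after subtracting $Q_{l_a}$, and stays $0$ after also subtracting $Q_{l_b}$. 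All other summands are unchanged. Therefore $\ell(A)-1=\ell(A-Q_{l_a})=\ell(A-Q_{l_a}-Q_{l_b})$, which is exactly the condition of Lemma \ref{discrepancy}.

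The step I expect to be the main obstacle is verifying that the $u_0$-summand is exactly $0$, with the floors of $(a_k-u_0n_{l_k})/p^n$ correctly converted to $j_k-\ceil*{\frac{i\lambda}{p^n}}n_{l_k}+\dots$ for each $k$. This bookkeeping is the same as in Proposition \ref{gap_set}, but it must be redone with $t$ coordinates.
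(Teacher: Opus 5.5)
Your proof is correct, and it takes a genuinely different route from the paper's.

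\textbf{The paper's route.} The paper argues by induction on $t$. The base case $t=2$ is Proposition \ref{Gamma_two_places}, which rests on the full gap description of Proposition \ref{gap_set} and on Homma's bijection criterion (Lemma \ref{minimal_generating_two}). In the inductive step it supposes some $\mathbf{a}\prec\mathbf{a}_{\mathbf{j},i\lambda}$ in $H$ with the same first coordinate. It multiplies a function with that pole divisor by the power $\bigl(p_{l_t}(x)/p_{l_{t-1}}(x)\bigr)^{j_t-\floor*{\frac{i\lambda n_{l_t}}{p^n}}}$, which moves the pole at $Q_{l_t}$ onto $Q_{l_{t-1}}$. This produces an element of $H(Q_{l_1},\dots,Q_{l_{t-1}})$ strictly below a point of $\Gamma_\lambda(Q_{l_1},\dots,Q_{l_{t-1}})$, contradicting the inductive hypothesis together with Proposition \ref{minimal_for_all}.

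\textbf{Your route.} You verify the criterion of Lemma \ref{discrepancy} directly from Corollary \ref{D_dim}. You use the same observation that drives Lemma \ref{i_is_same}: all coordinates have their discrepancy in the single summand $u_0\equiv -i\lambda\pmod{p^n}$. The bookkeeping you flagged does go through. Since $i\lambda+u_0=p^n\ceil*{\frac{i\lambda}{p^n}}$, one gets
$\floor*{\frac{a_k-u_0n_{l_k}}{p^n}}=j_k-n_{l_k}\ceil*{\frac{i\lambda}{p^n}}$ and $\floor*{\frac{-u_0n_{l_k}}{p^n}}=\floor*{\frac{i\lambda n_{l_k}}{p^n}}-n_{l_k}\ceil*{\frac{i\lambda}{p^n}}$. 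Summing with $\sum_{k\in I}n_kd_k=m$ gives exactly your displayed quantity $E$, which vanishes by the defining relation of $\Gamma_\lambda$.

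\textbf{What your route buys.}
\begin{itemize}
\item It is non-inductive and treats all $t\geq 2$ uniformly, so it does not need Homma's lemma even for $t=2$.
\item It makes transparent that the linear condition on $\sum_k j_k$ says exactly that the $u_0$-summand sits at the threshold $E=0$.
\item The same computation, run backwards, also gives the reverse inclusion of Proposition \ref{Gamma_subseteq_S} almost at once. For $\mathbf{a}\in\Gamma$, Lemma \ref{Gamma_cross}, Proposition \ref{gap_set} and Lemma \ref{i_is_same} put $\mathbf{a}$ in the shape $(j_kp^n-i\lambda n_{l_k})_k$ with $j_k\geq\ceil*{\frac{i\lambda n_{l_k}}{p^n}}$. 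Then $\ell(A)-\ell(A-Q_{l_a}-Q_{l_b})=\max\{0,E+1\}-\max\{0,E-1\}=1$ forces $E=0$.
\end{itemize}
The paper's route instead works with the order structure of $\Gamma$ and with explicit functions, in the style of Matthews and Yang--Hu, and avoids redoing dimension counts for each $t$.

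\textbf{One small correction.} Positivity of $j_kp^n-i\lambda n_{l_k}$ comes from $p^n\nmid i\lambda n_{l_k}$. This holds because $1\leq i\leq p^n-1$ and $p\nmid\lambda n_{l_k}$. It does not come from $p\nmid i\lambda n_{l_k}$, which fails when $p\mid i$ and $n\geq 2$. In any case positivity is already built into the condition $\Gamma_\lambda\subseteq\mathbb{N}^t$.
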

\begin{proof}
	We will prove the proposition by induction on $t$.
	It follows from Proposition \ref{Gamma_two_places} that the statement is right for $t=2$.
	Assume that $\Gamma_\lambda(Q_{l_1},\dots,Q_{l_k})\subseteq \Gamma (Q_{l_1},\dots,Q_{l_k})$ holds for all $2\leq k\leq t-1$, where $t\geq 3$.
	Let $\mathbf{a}_{\mathbf{j},i\lambda} =  (j_1p^n-i\lambda n_{l_1},\dots, j_tp^n - i\lambda n_{l_t})\in \Gamma_\lambda(Q_{l_1},\dots,Q_{l_t})$ and $z = y^{-i\lambda}\left(\prod_{k\in J}q_k(x)^{m_k\ceil*{\frac{i\lambda}{p^n}}}\right)\left(\prod_{k=2}^{p^n}\frac{y}{y-\beta_k}\right)^{\ceil*{\frac{i\lambda}{p^n}}}$.
	By divisors (\ref{divisor1}), (\ref{divisor2}) and (\ref{divisor3}), we have 
	{\allowdisplaybreaks\begin{align*}
		~&\left(\left(\prod_{k=1}^{t}p_{l_k}(x)^{-j_k}\right)\left(\prod_{k=t+1}^{|I|}p_{l_k}(x)^{-\floor*{\frac{i\lambda n_{l_k}}{p^n}}}\right)z\right)_\infty\\
		=~& -\sum_{k=1}^{t}j_k p^n Q_{l_k} + \sum_{k=1}^{t}j_k D_0 - \sum_{k=t+1}^{|I|} p^n \floor*{\frac{i\lambda n_{l_k}}{p^n}} Q_{l_k} + \sum_{k=t+1}^{|I|}\floor*{\frac{i\lambda n_{l_k}}{p^n}}d_{l_k} D_0\\
		 ~& -i\lambda \sum_{k\in J} m_k R_{k1} + \sum_{k=1}^{t}i\lambda n_{l_k} Q_{l_k} + i\lambda \sum_{k=t+1}^{|I|} n_{l_k} Q_{l_k}\\
		 ~& + \ceil*{\frac{i\lambda}{p^n}}\sum_{k\in J}m_k\sum_{t=1}^{p^n} R_{kt} - \ceil*{\frac{i\lambda}{p^n}}\sum_{k\in J}m_k e_k D_0\\
		 ~& + \ceil*{\frac{i\lambda}{p^n}}(p^n-1)\sum_{k\in J}m_kR_{k1} - \ceil*{\frac{i\lambda}{p^n}}\sum_{t=2}^{p^n}\sum_{k\in J}m_k R_{kt}\\
		=~& \sum_{k\in J}\left(m_kp^n\ceil*{\frac{i\lambda}{p^n}} - i\lambda m_k\right)R_{k1} + \sum_{k=t+1}^{|I|}\left(i\lambda n_{l_k} - p^n\floor*{\frac{i\lambda n_{l_k}}{p^n}}\right)Q_{l_k}\\
		 ~& -\sum_{k=1}^{t}(j_kp^n - i\lambda n_{l_k}) Q_{l_k}.
	\end{align*}}
	Since $m_kp^n\ceil*{\frac{i\lambda}{p^n}} - i\lambda m_k\geq 0$ for all $k\in J$ and $i\lambda n_{l_k} - p^n\floor*{\frac{i\lambda n_{l_k}}{p^n}}\geq 0$ for all $t+1\leq k \leq |I|$,
	we have that $\mathbf{a}_{\mathbf{j},i\lambda} =  (j_1p^n-i\lambda n_{l_1},\dots, j_tp^n - i\lambda n_{l_t})\in H(Q_{l_1},\dots, Q_{l_t})$.

	In order to show that $\mathbf{a}_{\mathbf{j},i\lambda}\in \Gamma(Q_{l_1},\dots,Q_{l_t})$, it suffices to prove that $\mathbf{a}_{\mathbf{j},i\lambda}$ is minimal in $\{\mathbf{u} = (u_1,\dots,u_t)\in H(Q_{l_1},\dots,Q_{l_t})\mid u_1 = j_1p^n - i\lambda n_{l_1}\}$.
	Suppose that $\mathbf{a}_{\mathbf{j},i\lambda}$ is not minimal in $\{\mathbf{u} = (u_1,\dots,u_t)\in H(Q_{l_1},\dots,Q_{l_t})\mid u_1 = j_1p^n - i\lambda n_{l_1}\}$. Then there exists $\mathbf{a} = (a_1,\dots,a_t)\in H(Q_{l_1},\dots,Q_{l_t})$ with $a_1 = j_1p^n - i\lambda n_{l_1}$ and  $\mathbf{a}\prec \mathbf{a}_{\mathbf{j},i\lambda}$.
	Let $h\in F$ be such that $(h)_\infty = \sum_{k=1}^{t} a_k Q_{l_k}$. Note that $\mathbf{a}\prec\mathbf{a}_{\mathbf{j},i\lambda}$ gives $a_k<j_k - i\lambda n_{l_k}$ for some $2\leq k\leq t$. Without loss of generality, assume that $a_2<j_2p^n - i\lambda n_{l_2}$.
	
	We take $b_{t-1} = j_tp^n - p^n \floor*{\frac{i\lambda n_{l_t}}{p^n}} + a_{t-1}$ and $b_k = a_k$ for $1\leq k\leq t-2$. 
	Since $-a_t + \left(j_t- \floor*{\frac{i\lambda n_{l_t}}{p^n}}\right)p^n \geq  i\lambda n_{l_t} - p^n\floor*{\frac{i\lambda n_{l_t}}{p^n}}>0$, we have 
	$$\left(h\cdot p_{l_t}(x)^{j_t- \floor*{\frac{i\lambda n_{l_t}}{p^n}}} p_{l_{t-1}}(x)^{-j_t + \floor*{\frac{i\lambda n_{l_t}}{p^n}}}\right)_\infty = \sum_{k=1}^{t-1}b_k Q_{l_k}.$$
	Hence $\mathbf{b} = (b_1,\dots,b_{t-1}) \in H(Q_{l_1},\dots, Q_{l_{t-1}})$.

	Let $c_{t-1} = \left(j_{t-1} + j_t - \floor*{\frac{i\lambda n_{l_t}}{p^n}}\right)p^n - i\lambda n_{l_{t-1}}$ and  
	$$\mathbf{c} = (j_1p^n - i\lambda n_{l_1},\dots, j_{t-2}p^n - i\lambda n_{l_{t-2}},c_{t-1}).$$
	Then we obtain that $\mathbf{c}\in \Gamma_{\lambda} (Q_{l_1},\dots,Q_{l_{t-1}})$. It follows from the induction hypothesis that $\Gamma_\lambda (Q_{l_1},\dots,Q_{l_{t-1}})\subseteq \Gamma (Q_{l_1},\dots,Q_{l_{t-1}})$.
	Thus we get that $\mathbf{c}\in \Gamma (Q_{l_1},\dots,Q_{l_{t-1}})$. Then by Proposition \ref{minimal_for_all}, we conclude that $\mathbf{c}$ is minimal in the set $\{\mathbf{u} = (u_1,\dots,u_{t-1})\in H(Q_{l_1},\dots,Q_{l_{t-1}})\mid u_1 = j_1p^n - i\lambda n_{l_1}\}$.
	Now we have 
	\begin{align*}
	\mathbf{b} \in \{\mathbf{u} = (u_1,\dots,u_{t-1})\in H(Q_{l_1},\dots,Q_{l_{t-1}})\mid u_1 = j_1p^n - i\lambda n_{l_1}\} \text{ and } \mathbf{b} \prec \mathbf{c},
	\end{align*}
	which is a contradiction to the minimality of $\mathbf{c}$. It follows that $\mathbf{a}_{\mathbf{j},i\lambda} $ is minimal in $\{\mathbf{u} = (u_1,\dots,u_t)\in H(Q_{l_1},\dots,Q_{l_t})~|~u_1=j_1p^n - i\lambda n_{l_1}\}$.
	Therefore we have $\mathbf{a}_{\mathbf{j},i\lambda} = (j_1p^n-i\lambda n_{l_1},\dots, j_tp^n-i\lambda n_{l_t})\in \Gamma(Q_{l_1},\dots,Q_{l_t})$ and
	\begin{align*}
		\left(\left(\prod_{k=1}^{t}p_{l_k}(x)^{-j_k}\right)\left(\prod_{k=t+1}^{|I|}p_{l_k}(x)^{-\floor*{\frac{i\lambda n_{l_k}}{p^n}}}\right)z\right)_\infty = \sum_{k=1}^{t} (j_kp^n-i\lambda n_{l_k}) Q_{l_k}.
	\end{align*}
	\end{proof}

\begin{proposition}\label{Gamma_subseteq_S}
	Let $\lambda\in\mathbb{Z}$ with $\gcd(p,\lambda)=1$. Suppose that $2\leq t\leq v$ and $d_{l_k} = 1$ for all $1\leq k\leq t$. Then
	$$\Gamma (Q_{l_1},\dots,Q_{l_t}) \subseteq \Gamma_\lambda(Q_{l_1},\dots,Q_{l_t}).$$
\end{proposition}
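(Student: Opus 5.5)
The plan is to start from an arbitrary $\mathbf{a}=(a_1,\dots,a_t)\in\Gamma(Q_{l_1},\dots,Q_{l_t})$ and put it into the shape required by $\Gamma_\lambda$.

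\textbf{Step 1: coordinatewise form.} By Lemma \ref{Gamma_cross}, each $a_k$ lies in $G(Q_{l_k})$. Proposition \ref{gap_set}, applied with the same $\lambda$ and with $d_{l_k}=1$, then writes each coordinate as $a_k=j_kp^n-i_k\lambda n_{l_k}$, where $1\leq i_k\leq p^n-1$ and $j_k\geq \ceil*{\frac{i_k\lambda n_{l_k}}{p^n}}$. Lemma \ref{i_is_same} gives $i_1=\dots=i_t=:i$.

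What remains is the single identity $\sum_{k=1}^t j_k=N$, where
$$N:=m\ceil*{\frac{i\lambda}{p^n}}-\sum_{k\in I}\floor*{\frac{i\lambda n_k}{p^n}}d_k+\sum_{k=1}^{t}\floor*{\frac{i\lambda n_{l_k}}{p^n}}.$$

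\textbf{Step 2: reduce to one summand of Corollary \ref{D_dim}.} Let $u\in\{0,\dots,p^n-1\}$ satisfy $u\equiv -i\lambda\pmod{p^n}$, so that $-i\lambda=p^n\floor*{\frac{-i\lambda}{p^n}}+u$. Because $\gcd(p,n_{l_k})=1$, $u$ is simultaneously the unique index with $a_k-un_{l_k}\equiv 0\pmod{p^n}$ for every $k\leq t$.

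Consider subtracting $Q_{l_1}$, or $Q_{l_1}+Q_{l_k}$, from $A=\sum_k a_kQ_{l_k}$ in Corollary \ref{D_dim}. The only summand that changes is the one indexed by $u$. Subtracting $Q_{l_1}$ lowers its inner expression by exactly $1$, and subtracting $Q_{l_1}+Q_{l_k}$ lowers it by exactly $2$.

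\textbf{Step 3: compute the inner expression.} Expand exactly as in the proof of Proposition \ref{gap_set}, using $m=\sum_{k\in I}n_kd_k$ and $\floor*{-x}=-\ceil*{x}$. This gives
$$S:=\sum_{k=1}^t\floor*{\frac{a_k-un_{l_k}}{p^n}}+\sum_{k=t+1}^{|I|}\floor*{\frac{-un_{l_k}}{p^n}}d_{l_k}=\sum_{k=1}^t j_k-N.$$
Hence the $u$-th summand of $\ell(A)$ equals $\max\{0,S+1\}$.

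\textbf{Step 4: pin down $S$ from Lemma \ref{discrepancy}.} Since $t\geq2$, Lemma \ref{discrepancy} applies with $P=Q_{l_1}$ and $Q=Q_{l_2}$.
\begin{itemize}
\item The condition $\ell(A)=\ell(A-Q_{l_1})+1$ says $\max\{0,S+1\}-\max\{0,S\}=1$. This forces $S\geq 0$, which is the same deduction as \eqref{dim4} in the proof of Lemma \ref{i_is_same}.
\item The condition $\ell(A)=\ell(A-Q_{l_1}-Q_{l_2})+1$ says $\max\{0,S+1\}-\max\{0,S-1\}=1$. With $S\geq 0$ this reads $S+1-\max\{0,S-1\}=1$, which forces $S=0$.
\end{itemize}
Therefore $\sum_k j_k=N$. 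Together with $a_k\in\mathbb{N}$ and the bounds $j_k\geq\ceil*{\frac{i\lambda n_{l_k}}{p^n}}$ from Step 1, this gives $\mathbf{a}\in\Gamma_\lambda(Q_{l_1},\dots,Q_{l_t})$.

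\textbf{Expected obstacle.} The delicate part is Step 3: carrying the floor and ceiling identities for a general integer $\lambda$, which may be negative or exceed $p^n$, and checking that the contributions from $k\notin\{l_1,\dots,l_t\}$ combine into $-\sum_{k\in I}\floor*{\frac{i\lambda n_k}{p^n}}d_k$. I would copy the chain of equalities from the proof of Proposition \ref{gap_set}, now summed over $k\leq t$. I would also verify that no summand other than the $u$-th is affected by subtracting $Q_{l_1}$ or $Q_{l_1}+Q_{l_2}$, which follows from the uniqueness of $u$ modulo $p^n$.
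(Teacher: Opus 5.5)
Your argument is correct, but it takes a genuinely different route from the paper.

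The paper proves the inclusion by induction on $t$. The case $t=2$ is Proposition~\ref{Gamma_two_places}, which rests on Homma's Lemma~\ref{minimal_generating_two}. For $t\geq 3$ the paper proceeds as follows:
\begin{itemize}
\item it multiplies a function with pole divisor $\sum_k a_kQ_{l_k}$ by a power of $p_{l_t}(x)/p_{l_2}(x)$, moving the pole at $Q_{l_t}$ onto $Q_{l_2}$;
\item it uses Theorem~\ref{lub_semi_group} to find a dominated $\mathbf{b}\in\tilde{\Gamma}(Q_{l_1},\dots,Q_{l_{t-1}})$ and applies the induction hypothesis to the nonzero part of $\mathbf{b}$;
\item it invokes the opposite inclusion (Proposition~\ref{S_subseteq_Gamma}) to build some $\mathbf{c}\preceq\mathbf{a}$ in $H(Q_{l_1},\dots,Q_{l_t})$, and concludes $\mathbf{a}=\mathbf{c}\in\Gamma_\lambda$ by minimality.
\end{itemize}

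You instead push the dimension count of Lemma~\ref{i_is_same} one step further. Once all coordinates share the same $i$, the places $Q_{l_1}$ and $Q_{l_2}$ have the same critical index $u$ in Corollary~\ref{D_dim}. The second-order condition $\ell(A)=\ell(A-Q_{l_1}-Q_{l_2})+1$ of Lemma~\ref{discrepancy} then forces $S=\sum_k j_k-N=0$. Your evaluation of $S$ is valid for every integer $\lambda$, since $\floor*{-i\lambda/p^n}=-\ceil*{i\lambda/p^n}$ and $m=\sum_{k\in I}n_kd_k$.

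Your route is shorter and uniform in $t$. It needs neither the case $t=2$, nor Theorem~\ref{lub_semi_group}, nor Proposition~\ref{S_subseteq_Gamma}. Read backwards, it also gives the reverse inclusion: for $\mathbf{a}\in\Gamma_\lambda$ one has $S=0$. Hence $\ell(A)-\ell(A-Q_{l_j})=1$ and $\ell(A)-\ell(A-Q_{l_j}-Q_{l_k})=1$ for all $j\neq k$, and the ``if'' direction of Lemma~\ref{discrepancy} applies. So your method yields all of Theorem~\ref{thm_minimal_set} without induction.

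The paper's proof follows the semigroup-theoretic pattern of Matthews and of Yang and Hu (lub decomposition plus minimality). That pattern is more conventional in this literature, but it is heavier than necessary here.
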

\begin{proof}
	We will prove the proposition by induction on $t$.
	It follows from Proposition \ref{Gamma_two_places} that the statement is right for $t=2$.
	Assume that $\Gamma (Q_{l_1},\dots,Q_{l_k})\subseteq\Gamma_\lambda(Q_{l_1},\dots,Q_{l_k})$ holds for all $2\leq k\leq t-1$, where $t\geq 3$.
	Let $\mathbf{a} = (a_1,\dots,a_t)\in \Gamma (Q_{l_1},\dots,Q_{l_t})$, then there exists $h\in F$ such that $(h)_\infty = \sum_{k=1}^{t}a_k Q_{l_k}$.
	By Lemmas \ref{Gamma_cross} and \ref{i_is_same}, we write 
	$$\mathbf{a} = (a_1,\dots,a_t) = (j_1p^n - i\lambda n_{l_1},\dots, j_tp^n - i\lambda n_{l_t}),$$
	where $1\leq i\leq p^n-1$ and $\ceil*{\frac{i\lambda n_{l_k}}{p^n}}\leq j_k\leq m\ceil*{\frac{i\lambda}{p^n}} - \sum_{j\in I}\floor*{\frac{i\lambda n_j}{p^n}}d_j+\floor*{\frac{i\lambda n_{l_k}}{p^n}}- 1$.

	Without loss of generality, we assume that $j_2 - \floor*{\frac{i\lambda n_{l_2}}{p^n}} = \max_{2\leq k\leq t}\left\{j_k - \floor*{\frac{i\lambda n_{l_k}}{p^n}}\right\}$. We have
	$$\left(h\cdot(p_{l_t}(x)/p_{l_2}(x))^{j_t-\floor*{\frac{i\lambda n_{l_t}}{p^n}}}\right)_{\infty}= a_1Q_{l_1} + \left(a_2+p^n\left(j_t-\floor*{\frac{i\lambda n_{l_t}}{p^n}}\right)\right)Q_{l_2} +\sum_{k=3}^{t-1}a_k Q_{l_k}.$$
	Thus we have that $\left(a_1, a_2 + p^n\left(j_t - \floor*{\frac{i\lambda n_{l_t}}{p^n}}\right),a_3,\dots,a_{t-1}\right)\in H(Q_{l_1},\dots,Q_{l_{t-1}})$.
	By Theorem \ref{lub_semi_group}, there exists $\mathbf{b} = (b_1,\dots,b_{t-1})\in \tilde{\Gamma}(Q_{l_1},\dots,Q_{l_{t-1}})$ such that 
	$$\mathbf{b}\preceq \left(a_1, a_2 + p^n\left(j_t - \floor*{\frac{i\lambda n_{l_t}}{p^n}}\right),a_3,\dots,a_{t-1}\right),$$
	and $b_1 = a_1 = j_1p^n - i\lambda n_{l_1}$. If $b_2\leq a_2$, then $(b_1,b_2,\dots,b_{t-1},0)\prec \mathbf{a}$. This yields a contradiction as $\mathbf{a}$ is minimal in $\{\mathbf{u} = (u_1,\dots,u_t)\in H(Q_{l_1},\dots,Q_{l_t})\mid u_1 = j_1p^n - i\lambda n_{l_1}\}$.
	Thus we get $b_2> a_2 >0$. Let $M = \{k_1,\dots,k_u\} = \{1\leq k\leq t-1\mid b_k>0\}$. We have $u\geq 2$ since $b_1 = a_1>0$ and $b_2>a_2>0$. 
	Then $\pi_M(\mathbf{b})\in \Gamma(Q_{l_{k_1}},\dots,Q_{l_{k_u}})$. By the induction hypothesis, we obtain that
	$$\pi_M(\mathbf{b}) = (T_{k_1}p^n - i\lambda n_{l_{k_1}},\dots, T_{k_u}p^n - i\lambda n_{l_{k_u}})\in \Gamma_\lambda (Q_{l_{k_1}},\dots,Q_{l_{k_u}}),$$
	where $1\leq i\leq p^n-1$, $T_{k_j}\geq \ceil*{\frac{i\lambda n_{l_{k_j}}}{p^n}}$ for $1\leq j\leq u$, and $\sum_{j=1}^{u} T_{k_{j}} = m \ceil*{\frac{i\lambda}{p^n}} - \sum_{j\in I} \floor*{\frac{i\lambda n_j}{p^n}}d_j + \sum_{j=1}^{u} \floor*{\frac{i\lambda n_{l_{k_j}}}{p^n}}$.
	
	Note that $k_1 = 1$ as $b_1 = a_1 >0$ and $k_2 = 2$ as $b_2>a_2>0$. Since $mT_2 - i\lambda n_{l_2} = b_2 > a_2 = mj_2 - i\lambda n_{l_2}$, we have $T_2\geq j_2 + 1$.
	Since $j_2 - \floor*{\frac{i\lambda n_{l_2}}{p^n}}\geq j_t - \floor*{\frac{i\lambda n_{l_t}}{p^n}}$, we have that 
	$$T_2 - j_t + \floor*{\frac{i\lambda n_{l_t}}{p^n}} \geq j_2 + 1 - j_t + \floor*{\frac{i\lambda n_{l_t}}{p^n}} \geq 1 + \floor*{\frac{i\lambda n_{l_2}}{p^n}} = \ceil*{\frac{i\lambda n_{l_2}}{p^n}}.$$
	Set 
	$$\mathbf{c}  = \left(b_1, b_2 -\left(j_t - \floor*{\frac{i\lambda n_{l_t}}{p^n}}\right)p^n, b_3,\dots, b_{t-1}, j_tp^n - i\lambda n_{l_t} \right).$$
	We obtain that $\mathbf{c}\preceq \mathbf{a}$. Then $\pi_{M\cup\{t\}}(\mathbf{c})$ is formed by some of nonzero coordinates of $\mathbf{c}$. We verify that 
	$$T_1 + T_2 - j_t + \floor*{\frac{i\lambda n_{l_t}}{p^n}} + \sum_{j=3}^{u}T_{k_j} + j_t = m\ceil*{\frac{i\lambda}{p^n}} - \sum_{j\in I}\floor*{\frac{i\lambda n_j}{p^n}}d_j + \sum_{j=1}^{u}\floor*{\frac{i\lambda n_{l_{k_j}}}{p^n}} + \floor*{\frac{i\lambda n_{l_t}}{p^n}}.$$
	It follows that $\pi_{M\cup\{t\}}(\mathbf{c})\in \Gamma_\lambda (Q_{l_{k_1}},\dots,Q_{l_{k_u}},Q_{l_t})$. By Proposition \ref{S_subseteq_Gamma}, we have that $\Gamma_\lambda (Q_{l_{k_1}},\dots,Q_{l_{k_u}},Q_{l_t})\subseteq \Gamma(Q_{l_{k_1}},\dots,Q_{l_{k_u}},Q_{l_t})$.
	Then we get $\mathbf{c}\in \tilde{\Gamma} (Q_{l_{1}},\dots,Q_{l_t})\subseteq H(Q_{l_1},\dots,Q_{l_t})$. Note that $\mathbf{c}\preceq \mathbf{a}$ and $\mathbf{a}\in  \Gamma(Q_{l_t},\dots,Q_{l_t})$.
	Therefore, we obtain that $\mathbf{a} = \mathbf{c}$ as otherwise $\mathbf{a}$ is not minimal in $\{\mathbf{u} = (u_1,\dots,u_t)\in H(Q_{l_1},\dots,Q_{l_t})\mid u_1= j_1p^n - i\lambda n_{l_1}\}$.
	Since $a_k>0$ for all $1\leq k\leq t$, we obtain that $\mathbf{a} = \mathbf{c} = \pi_{M\cup\{t\}}(\mathbf{c})\in \Gamma (Q_{l_1},\dots,Q_{l_t})$. The proof is completed.
\end{proof}

Now we finish the proof of Theorem \ref{thm_minimal_set}.
\begin{proof}
	Combining Propositions \ref{S_subseteq_Gamma} and \ref{Gamma_subseteq_S}, it remains to show that 
	$\Gamma_1(Q_{l_1},\dots,Q_{l_t})\neq \varnothing$. For each $1\leq k\leq t$, let $b_{k}\in\mathbb{Z}$ and $1\leq c_{k}\leq p^n-1$ such that $-n_{l_k}= p^nb_{k} + c_{k}$.  
	Then we have $\floor*{\frac{n_{l_k}}{p^n}} = -b_{k}-1$ and $\ceil*{\frac{n_{l_k}}{p^n}} = -b_{k}$. Let $j_1 = m - \sum_{k\in I}\floor*{\frac{n_k}{p^n}}d_k -t - b_{1}$ and $j_k = -b_{k}$ for $2\leq k\leq t$.
	It is easy to verify that $(j_1p^n - n_{l_1},\dots, j_tp^n - n_{l_t})\in \Gamma_1(Q_{l_1},\dots,Q_{l_t})$.
\end{proof}

\section{Some examples}\label{section5}
In this section, we investigate a special class of linearized function fields and present several examples as applications of our main results. Consider the linearized function field defined by the equation \eqref{equation1}, where the denominator $g(x)$ is a separable polynomial.
In other words, we study the function field $F = K(x,y)$ defined by 
$$L(y) = \alpha\cdot\frac{\prod_{j=0}^{r}q_j(x)^{m_j}}{\prod_{i=0}^{s}p_i(x)},$$
where $\alpha\in K$, $m_1,\dots,m_r\in \mathbb{N}$, $p_1(x),\dots,p_s(x)$, $q_1(x),\dots,q_r(x)\in K[x]$ are pairwise distinct monic irreducible polynomials, $p_0(x) = q_0(x) = 1$, and $m_0 = \sum_{i=1}^{s} \deg p_i(x) - \sum_{j=1}^r m_j\cdot\deg q_j(x)$.
Recall that $d_k = \deg p_k(x)$ for $1\leq k\leq s$ and $n_0 = -m_0$. 
Let $Q_i$ be the only zero of $p_i(x)$ in $\mathbf{P}_F$ for $1\leq i\leq s$. Let $Q_\infty$ be the only pole of $x$ in $\mathbf{P}_F$ if $n_0>0$.
We will carry out our investigation and discussion by distinguishing the two cases $n_0\leq 0$ and $n_0>0$.

\subsection{The case $n_0\leq 0$}
In this subsection, we suppose that $n_0\leq 0$. 
\begin{proposition}\label{form1}
	Suppose that $s\geq 2$. Let $1\leq l\leq s$ with $d_l = 1$.  Then 

	(i) $G(Q_l) = \left\{jp^n + i\mid 1\leq i\leq p^n-1,\, 0\leq j\leq \sum_{k=1}^{s}d_k - 2\right\}$.

	(ii) $H(Q_l) = \{a\in \mathbb{N}\mid a\geq (\sum_{k=1}^{s}d_k - 1)p^n\}\cup\{kp^n\mid k\in\mathbb{N}_0\}$.

	(iii) $m_{H(Q_l)} = p^n$ and $F_{H(Q_l)} = (\sum_{k=1}^{s}d_k-1)p^n -1$.

	Moreover, suppose that $2\leq t\leq s$ and $d_i = 1$ for all $1\leq i\leq t$. Then 
	\begin{align*}
		\Gamma(Q_1,\dots,Q_t) = \Big\{ (j_1p^n+i,\dots, j_tp^n+i)&\in \mathbb{N}^{t} \mid 1\leq i \leq p^n-1 , \\
		 &j_k\geq 0 \text{ for } 1\leq k\leq t,~ \sum_{k=1}^{t}j_k = \sum_{k=t+1}^{s}d_k \Big\}.
	\end{align*}
\end{proposition}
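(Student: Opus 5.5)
This is a direct specialization of Proposition \ref{gap_set}, Corollary \ref{m_and_F} and Corollary \ref{Gamma_some_equal} to the present setting. There all $n_k=1$ for $1\leq k\leq s$, and since $n_0\leq 0$ we have $I=\{1,\dots,s\}$. Hence $m=\sum_{k\in I}n_kd_k=\sum_{k=1}^s d_k$, and the ramified places all have $n_k\equiv 1 \pmod{p^n}$.

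For (i), I would use the description \eqref{description3} with $\lambda$ the inverse of $-n_l=-1$ modulo $p^n$, taking concretely $\lambda=-1$. For $1\leq i\leq p^n-1$ we get $\ceil*{\frac{-i}{p^n}}=0$ and $\floor*{\frac{-i n_k}{p^n}}=\floor*{\frac{-i}{p^n}}=-1$ for every $k\in I$. The upper bound on $j$ in \eqref{description3} is therefore
$$0+\sum_{k=1}^s d_k-2,$$
which gives exactly (i). Alternatively, \eqref{description2} with $\ceil*{\frac{in_k}{p^n}}=1$ gives the same set.

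For (ii), note that the largest element of $G(Q_l)$ from (i) is $(\sum_k d_k-2)p^n+p^n-1=(\sum_k d_k-1)p^n-1$. Every integer in $[0,(\sum_k d_k-1)p^n)$ that is not a multiple of $p^n$ is a gap. So $H(Q_l)$ is the set of multiples of $p^n$ together with all $a\geq(\sum_k d_k-1)p^n$. From this, (iii) follows at once, or alternatively from Corollary \ref{m_and_F}: since $s\geq 2$ we have $|I|\geq 2$, so $m_{H(Q_l)}=p^n$; also $\floor*{1/p^n}=0$, so $F_{H(Q_l)}=(m-1)p^n-1$.

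For the $\Gamma$ statement, I would apply Corollary \ref{Gamma_some_equal} with $u=t$, $l_k=k$, and $\lambda=-1$, which is valid because $n_1\equiv\dots\equiv n_t\pmod{p^n}$. The conditions reduce to $j_k\geq 0$ for all $k$, and the sum constraint becomes
$$\sum_{k=1}^t j_k = m\ceil*{\tfrac{-i}{p^n}}-\sum_{k\in I}\floor*{\tfrac{-i}{p^n}}d_k-t = 0+\sum_{k=1}^s d_k-t=\sum_{k=t+1}^s d_k,$$
using $d_k=1$ for $k\leq t$. I should also confirm that the hypothesis $t\leq v$ of the corollary holds. It holds whenever the $Q_1,\dots,Q_t$ are rational places of a field with enough constants, which I take as implicit in the statement (the statement silently requires $t\leq |K|$). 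The only mildly delicate point is the floor/ceiling bookkeeping with negative $\lambda$, and that is routine.
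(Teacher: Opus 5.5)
Your proposal is correct and is essentially the paper's argument. The paper likewise specializes \eqref{description2} (which here coincides with your use of \eqref{description3} at $\lambda=-1$), Corollaries~\ref{semigroup} and~\ref{m_and_F}, and the $\Gamma$-result at $\lambda=-1$ via Corollary~\ref{Gamma_simply}; for $n_1=\dots=n_t=1$ that corollary is literally your Corollary~\ref{Gamma_some_equal} with $u=t$ (all $b_k=0$).

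One small correction: the hypothesis $t\le v$ is not an extra implicit assumption but is automatic. Indeed $d_1=\dots=d_t=1$ forces $p_k(x)=x-a_k$ with pairwise distinct $a_k\in K$, so $t\le |K|$, and also $t\le s=|I|$.
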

\begin{proof}
	By \eqref{description2} in Proposition \ref{gap_set}, we get that
	\begin{align*}
		G(Q_l) & =  \left\{jp^n + i ~\Big | ~ 1\leq i\leq p^n-1,\, \ceil*{\frac{-i}{p^n}}\leq j\leq \sum_{k=1}^s\ceil*{\frac{i}{p^n}}d_k - \ceil*{\frac{i}{p^n}}-1\right\}\\
		&= \left\{jp^n + i\mid 1\leq i\leq p^n-1,\, 0\leq j\leq \sum_{k=1}^{s}d_k - 2\right\}.
	\end{align*}
	By Corollary \ref{semigroup}, we have 
	\begin{align*}
		H(Q_l) &= \left\langle p^n, \left(\sum_{k=1}^s\ceil*{\frac{i}{p^n}}d_k - \ceil*{\frac{i}{p^n}}\right)p^n+i : 1\leq i\leq p^n-1\right\rangle\\
		&= \left\langle p^n, \left(\sum_{k=1}^{s}d_k - 1\right)p^n+i : 1\leq i\leq p^n-1\right\rangle\\
		&= \left\{a\in \mathbb{N} ~\Big | ~ a\geq \left(\sum_{k=1}^{s}d_k - 1\right)p^n\right\}\cup\left\{kp^n\mid k\in\mathbb{N}_0\right\}.
	\end{align*}
	Note that $s\geq 2$. Then by Corollary \ref{m_and_F}, we obtain that $m_{H(Q_l)} = p^n$ and 
	$$F_{H(Q_l)} = \left(\sum_{k=1}^{s}d_k - \sum_{k=1}^s\floor*{\frac{1}{p^n}}d_k + \floor*{\frac{1}{p^n}}-1\right)p^n - n_l = \left(\sum_{k=1}^{s}d_k-1\right)p^n -1.$$

	Moreover, suppose that $2\leq t\leq s$ and $d_i = 1$ for all $1\leq i\leq t$. The desired expression is obtained from Corollary \ref{Gamma_simply}. 
\end{proof}

	\begin{example}
		Consider the function field $F = \mathbb{F}_8(x,y)$ defined by 
		$$y^8 + y^4 + y^2 + y = \frac{x(x+1)}{x^3+x+1}.$$
		Then by Proposition \ref{form1}, for each $1\leq l\leq 3$, we obtain that 
		$$G(Q_l) = \{1,2,3,4,5,6,7,9,10,11,12,13,14,15\},$$
		$$H(Q_l) = \{a\in\mathbb{N}\mid a\geq 16\}\cup\{0,8\},$$
		$m_{H(Q_l)} = 8$, and $F_{H(Q_l)} = 15$. Moreover, we have 
		$$\Gamma(Q_1,Q_2) = \left\{\begin{array}{l}(1,9),(9,1),(2,10),(10,2),(3,11),(11,3),(4,12),(12,4)\\
		(5,13),(13,5),(6,14),(14,6),(7,15),(15,7)\end{array}\right\}, \text{ and }$$
		$$\Gamma(Q_1,Q_2,Q_3) = \left\{\begin{array}{l}(1,1,1),(2,2,2),(3,3,3),(4,4,4)\\
		(5,5,5),(6,6,6),(7,7,7)\end{array}\right\}.$$
	\end{example}

\subsection{The case $n_0>0$}
	In this subsection, we suppose that $n_0 > 0$. 

\begin{proposition}\label{form2}
	For the infinite place $Q_\infty$, 

	(i) $G(Q_\infty) = \Big\{jp^n + in_0 \mid 1\leq i\leq p^n-1,\, \ceil*{\frac{-in_0}{p^n}}\leq j\leq \sum_{i=1}^{s}d_k-1\Big\}$.

	(ii) $H(Q_\infty) =  \left\langle p^n , p^n\sum_{k=1}^{s}d_k + in_0 : 1\leq i\leq p^n-1\right\rangle$.

	(iii) $m_{H(Q_\infty)} = p^n$ and $F_{H(Q_\infty)} = p^n\left(\sum_{k=1}^{s}d_k + n_0-1\right) - n_0$.

	Moreover, suppose that $1\leq t\leq s$ and $d_i = 1$ for all $1\leq i\leq t$. Then 
	\begin{align*}
		\Gamma(Q_\infty, Q_1,\dots,Q_t) = \Bigg\{ (j_0 + in_0, j_1p^n+i,\dots, j_tp^n+i)\in \mathbb{N}^{t+1} \mid 1\leq i \leq p^n-1 , \\
		 j_0\geq \ceil*{\frac{-in_0}{p^n}},\, j_k\geq 0 \text{ for } 1\leq k\leq t,~ \sum_{k=0}^{t}j_k = \sum_{k=t+1}^{s}d_k  \Bigg\}.
	\end{align*}
\end{proposition}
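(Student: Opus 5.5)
The plan is to specialize the general results of Sections \ref{section3} and \ref{section4} to the present setting. Here $n_i=1$ for $1\leq i\leq s$, $n_0>0$ with $\gcd(n_0,p)=1$, and $I=\{0,1,\dots,s\}$ with $d_0=1$. Hence $Q_\infty=Q_0$ is totally ramified of degree one, and
$$m=\sum_{k\in I}n_kd_k=n_0+\sum_{k=1}^s d_k.$$
The only arithmetic facts needed are that, for $1\leq i\leq p^n-1$,
$$\ceil*{\frac{i}{p^n}}=1,\qquad \floor*{\frac{i}{p^n}}=0,\qquad \floor*{\frac{-i}{p^n}}=-1,\qquad \floor*{\frac{1}{p^n}}=0.$$
I will assume $s\geq 1$ (implicit in the statement, which otherwise would fall under the case $|I|=1$ of Corollary \ref{m_and_F}).

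For (i), apply \eqref{description2} of Proposition \ref{gap_set} with $l=0$. The upper bound for $j$ becomes
$$\sum_{k\in I}\ceil*{\frac{in_k}{p^n}}d_k-\ceil*{\frac{in_0}{p^n}}-1=\ceil*{\frac{in_0}{p^n}}+\sum_{k=1}^s d_k-\ceil*{\frac{in_0}{p^n}}-1=\sum_{k=1}^s d_k-1,$$
which gives (i) directly.

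For (ii), Corollary \ref{semigroup} with $l=0$ has the same coefficient $\sum_{k\in I}\ceil*{\frac{in_k}{p^n}}d_k-\ceil*{\frac{in_0}{p^n}}=\sum_{k=1}^s d_k$, so the generators are $p^n$ and $p^n\sum_{k=1}^s d_k+in_0$.

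For (iii), $|I|=s+1\geq 2$, so Corollary \ref{m_and_F} gives $m_{H(Q_\infty)}=p^n$. For the Frobenius number, the term $-\sum_{k\in I}\floor*{\frac{n_k}{p^n}}d_k+\floor*{\frac{n_0}{p^n}}$ reduces to $-\sum_{k=1}^s\floor*{\frac{1}{p^n}}d_k=0$. Therefore
$$F_{H(Q_\infty)}=(m-1)p^n-n_0=\left(\sum_{k=1}^s d_k+n_0-1\right)p^n-n_0.$$

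For the description of $\Gamma(Q_\infty,Q_1,\dots,Q_t)$, I will use Corollary \ref{Gamma_some_equal}. The ordered places are $Q_{l_1},\dots,Q_{l_{t+1}}$ with $l_k=k$ for $1\leq k\leq t$ and $l_{t+1}=0$. Set $u=t$, which is allowed since $n_1=\dots=n_t=1$ are congruent, and take $\lambda=-1$, the inverse of $-n_{l_1}=-1$ modulo $p^n$. This requires $2\leq t+1\leq v$, i.e.\ $|K|\geq t+1$, which is implicit in the definition of $\Gamma$ (since $s\leq|K|$ there). The corollary then yields tuples $(j_1p^n+i,\dots,j_tp^n+i,\,j_0p^n+in_0)$, with $j_k\geq0$ for $1\leq k\leq t$ and $j_0\geq\ceil*{\frac{-in_0}{p^n}}$.

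The main step is simplifying the linear constraint on the sum of the $j_k$. Its right-hand side consists of four pieces:
\begin{align*}
m\ceil*{\frac{-i}{p^n}}&=0,\\
-\sum_{k\in I}\floor*{\frac{-in_k}{p^n}}d_k&=\ceil*{\frac{in_0}{p^n}}+\sum_{k=1}^s d_k,\\
-u&=-t,\\
\floor*{\frac{-in_0}{p^n}}&=-\ceil*{\frac{in_0}{p^n}}.
\end{align*}
The total is $\sum_{k=1}^s d_k-t=\sum_{k=t+1}^s d_k$, because $d_1=\dots=d_t=1$.

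Finally, permute coordinates to put $Q_\infty$ first. Reordering places only permutes the coordinates of $\Gamma$, directly from its definition. This gives the stated set, where the first coordinate should read $j_0p^n+in_0$. I expect no real obstacle beyond carefully tracking these floor and ceiling identities.
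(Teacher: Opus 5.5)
Your proposal is correct and follows the paper's own proof: (i)--(iii) are read off from \eqref{description2}, Corollary \ref{semigroup} and Corollary \ref{m_and_F} with $l=0$, and the description of $\Gamma(Q_\infty,Q_1,\dots,Q_t)$ comes from Corollary \ref{Gamma_some_equal} with $\lambda=-1$. Your two side remarks are also right and consistent with the paper. The first coordinate should read $j_0p^n+in_0$, as the paper's example confirms. The hypothesis $s\geq 1$ is needed for $m_{H(Q_\infty)}=p^n$, and the paper's proof also invokes it explicitly.
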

\begin{proof}
	By \eqref{description2} in Proposition \ref{gap_set}, the set $G(Q_\infty)$ is given by
	\begin{align*}
		&\left\{jp^n + in_0 ~\Big | ~ 1\leq i\leq p^n-1,\, \ceil*{\frac{-in_0}{p^n}}\leq j\leq \sum_{k=1}^s\ceil*{\frac{i}{p^n}}d_k + \ceil*{\frac{in_0}{p^n}} - \ceil*{\frac{in_0}{p^n}}-1\right\}\\
		 = &\left\{jp^n + i\mid 1\leq i\leq p^n-1,\, \ceil*{\frac{-in_0}{p^n}}\leq j\leq \sum_{k=1}^{s}d_k-1\right\}.
	\end{align*}
	By Corollary \ref{semigroup}, we have 
	\begin{align*}
		H(Q_\infty) & = \left\langle p^n, \left(\sum_{k=1}^s \ceil*{\frac{i}{p^n}}d_k  + \ceil*{\frac{in_0}{p^n}} - \ceil*{\frac{in_0}{p^n}}\right)p^n + i : 1\leq i\leq p^n-1 \right\rangle\\
		&= \left\langle p^n, p^n\sum_{k=1}^{s}d_k +i : 1\leq i\leq p^n-1\right\rangle.
	\end{align*}
	Note that $n_0>0$ and $s\geq 1$. Then by Corollary \ref{m_and_F}, we have $m_{H(Q_l)} = p^n$ and 
	\begin{align*}
	 F_{H(Q_l)} &= \left(\sum_{k=1}^{s}d_k + n_0 - \sum_{k = 1}^s\floor*{\frac{1}{p^n}}d_k - \floor*{\frac{n_0}{p^n}} + \floor*{\frac{n_0}{p^n}}-1\right)p^n - n_0 \\
	 			&= \left(\sum_{k=1}^{s}d_k + n_0 -1\right)p^n-n_0.
	\end{align*}

	Moreover, suppose that $1\leq t\leq s$ and $d_i = 1$ for all $1\leq i\leq t$. Take $\lambda = -1$ in Corollary \ref{Gamma_some_equal}. We get the desired expression.
\end{proof}

In particular, let $s=0$. Then the linearized function field $F$ is defined by $L(y) = f(x)$, where $f(x)\in K[x]$ with $\gcd(\deg f(x), p) = 1$. This class of function fields contains many important families, including Hermitian function fields and function fields of norm-trace curves. We have the following result.
\begin{corollary}
	Suppose that $s = 0$. Then $H(Q_\infty) = \langle p^n, n_0\rangle$ and $H(Q_\infty)$ is symmetric.
\end{corollary}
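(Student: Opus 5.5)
When $s=0$, equation \eqref{equation1} reduces to $L(y)=f(x)$ with $n_0=\deg f(x)>0$ and $\gcd(n_0,p)=1$. Then $I=\{0\}$, there is exactly one totally ramified place $Q_\infty=Q_0$, and $d_0=1$, $m=n_0$. The plan is to specialize Corollary \ref{semigroup} and then read off symmetry from Theorem \ref{sym_thm}.

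For the generators, set $l=0$ in Corollary \ref{semigroup}. The sum $\sum_{k\in I}\ceil*{\frac{in_k}{p^n}}d_k$ then has the single term $\ceil*{\frac{in_0}{p^n}}$, which cancels the subtracted $\ceil*{\frac{in_0}{p^n}}$. So the coefficient of $p^n$ in each generator is $0$, and
$$H(Q_\infty)=\langle p^n,\ in_0 : 1\leq i\leq p^n-1\rangle=\langle p^n,n_0\rangle,$$
since each $in_0$ is a multiple of $n_0$. This agrees with the case $|I|=1$ of Corollary \ref{m_and_F}, where $m_{H(Q_\infty)}=\min\{p^n,n_0\}$.

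For symmetry, apply Theorem \ref{sym_thm} with $l=0$. The condition $n_k\equiv -1\pmod{p^n}$ for all $k\in I\setminus\{0\}=\varnothing$ holds vacuously, so $H(Q_\infty)$ is symmetric.

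As a consistency check, which also gives a self-contained alternative: Corollary \ref{m_and_F} yields
$$F_{H(Q_\infty)}=\Bigl(n_0-\floor*{\tfrac{n_0}{p^n}}+\floor*{\tfrac{n_0}{p^n}}-1\Bigr)p^n-n_0=(n_0-1)p^n-n_0,$$
and the genus formula gives $g=(p^n-1)(n_0-1)/2$. Hence $2g-1=(p^n-1)(n_0-1)-1=(n_0-1)p^n-n_0$, so $F_{H(Q_\infty)}=2g-1$. This also matches the classical fact that a numerical semigroup generated by two coprime integers is symmetric.

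There is no real obstacle. The only point needing care is checking that $s=0$ fits the setup of \eqref{equation1}, namely that $I=\{0\}$ and $d_0=1$ so that Corollary \ref{semigroup} and Theorem \ref{sym_thm} apply with $l=0$, and that the vacuous condition is read correctly.
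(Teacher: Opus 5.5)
Your proof is correct and follows the paper's argument. The paper reads off $H(Q_\infty)=\langle p^n, in_0 : 1\leq i\leq p^n-1\rangle=\langle p^n,n_0\rangle$ from the $s=0$ case of its Section 5 description, which is itself a specialization of Corollary~\ref{semigroup} (you invoke that corollary directly), and then obtains symmetry from Theorem~\ref{sym_thm} because the condition on $I\setminus\{l\}=\varnothing$ holds vacuously; your check of the Frobenius number against $2g-1$ is a valid independent confirmation but not needed.
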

\begin{proof}
	It follows from Proposition \ref{form3} that 
	 $$H(Q_\infty) =  \left\langle p^n , in_0 : 1\leq i\leq p^n-1\right\rangle = \langle p^n, n_0\rangle.$$
	By Theorem \ref{sym_thm}, we have that $H(Q_\infty)$ is symmetric.
\end{proof}

\begin{proposition}\label{form3}
	Suppose that $1\leq l\leq s$ and $d_l = 1$. Then 

	(i) $G(Q_l) = \left\{jp^n + i\mid 1\leq i\leq p^n-1,\, 0\leq j\leq \sum_{k=1}^{s}d_k + \ceil*{\frac{in_0}{p^n}} - 2\right\}$.

	(ii) $H(Q_l) = \left\langle p^n , \left(\sum_{k=1}^{s}d_k + \ceil*{\frac{in_0}{p^n}} - 1\right)p^n + i : 1\leq i\leq p^n-1\right\rangle$.

	(iii) $m_{H(Q_l)} = p^n$ and $F_{H(Q_l)} = \left(\sum_{k=1}^{s}d_k + n_0 - \floor*{\frac{n_0}{p^n}}-1\right)p^n-1$.

	Moreover, suppose that $2\leq t\leq s$ and $d_i = 1$ for all $1\leq i\leq t$. Then 
	\begin{align*}
		\Gamma(Q_1,\dots,Q_t) = \Bigg\{ (j_1p^n+i,\dots, j_tp^n+i)\in \mathbb{N}^{t} \mid 1\leq i \leq p^n-1 , \\
		 j_k\geq 0 \text{ for } 1\leq k\leq t,~ \sum_{k=1}^{t}j_k = \sum_{k=t+1}^{s}d_k + \ceil*{\frac{in_0}{p^n}} \Bigg\}.
	\end{align*}
\end{proposition}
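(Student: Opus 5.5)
This is a direct specialization of Proposition \ref{gap_set}, Corollary \ref{semigroup}, Corollary \ref{m_and_F} and Corollary \ref{Gamma_simply}. In the present setting $n_0>0$, so $I=\{0,1,\dots,s\}$, $n_k=1$ for $1\leq k\leq s$, and $d_0=1$. Also $m=\sum_{k\in I}n_kd_k=\sum_{k=1}^s d_k+n_0$, and $\gcd(n_0,p)=1$ holds by the standing assumption.

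\textbf{Part (i).} I would use description \eqref{description2} with $n_l=1$. For $1\leq i\leq p^n-1$ we have $\ceil*{\frac{-i}{p^n}}=0$ and $\ceil*{\frac{i}{p^n}}=1$, so
$$\sum_{k\in I}\ceil*{\frac{in_k}{p^n}}d_k-\ceil*{\frac{in_l}{p^n}}-1=\sum_{k=1}^s d_k+\ceil*{\frac{in_0}{p^n}}-2.$$
This gives (i) immediately.

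\textbf{Part (ii).} Corollary \ref{semigroup} with the same substitutions yields the generators $\left(\sum_{k=1}^s d_k+\ceil*{\frac{in_0}{p^n}}-1\right)p^n+i$ for $1\leq i\leq p^n-1$, together with $p^n$.

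\textbf{Part (iii).} Since $|I|=s+1\geq 2$, Corollary \ref{m_and_F} gives $m_{H(Q_l)}=p^n$. For the Frobenius number, $\floor*{\frac{n_k}{p^n}}=\floor*{\frac{1}{p^n}}=0$ for $1\leq k\leq s$, which leaves only the $k=0$ term $\floor*{\frac{n_0}{p^n}}$, and $\floor*{\frac{n_l}{p^n}}=0$. Hence
$$F_{H(Q_l)}=\left(\sum_{k=1}^s d_k+n_0-\floor*{\frac{n_0}{p^n}}-1\right)p^n-1.$$
As a sanity check, this should agree with the largest element of (i), attained at $i=p^n-1$. There $\ceil*{\frac{(p^n-1)n_0}{p^n}}=n_0-\floor*{\frac{n_0}{p^n}}$ because $p^n\nmid n_0$, and the two expressions coincide.

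\textbf{The $\Gamma$ statement.} I would apply Corollary \ref{Gamma_simply} with $l_k=k$ for $1\leq k\leq t$. This requires $t\leq v=\min\{|I|,|K|\}$; since $t\leq s<|I|$, the remaining condition $t\leq |K|$ is implicit, as in the paper's other examples. With $n_{l_k}=1$, the lower bounds become $j_k\geq\ceil*{\frac{-i}{p^n}}=0$. The sum condition becomes
$$\sum_{k=1}^t j_k=\sum_{k=1}^s d_k+\ceil*{\frac{in_0}{p^n}}-t=\sum_{k=t+1}^s d_k+\ceil*{\frac{in_0}{p^n}},$$
using $d_k=1$ for $k\leq t$.

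There is no real obstacle here. The only point needing care is bookkeeping the $k=0$ term correctly: $d_0=1$, $n_0$ is general, and the ceiling and floor of $in_0/p^n$ must be handled using $p\nmid n_0$.
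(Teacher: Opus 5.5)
Your proposal is correct and is exactly the specialization the paper intends: it omits the proof as ``similar to Propositions \ref{form1} and \ref{form2}'', and that amounts to substituting $I=\{0,\dots,s\}$, $n_k=1$ for $1\leq k\leq s$, $d_0=1$ and $m=\sum_{k=1}^s d_k+n_0$ into \eqref{description2} and Corollaries \ref{semigroup}, \ref{m_and_F} and \ref{Gamma_simply}. One small sharpening: the hypothesis $t\leq |K|$ is not merely implicit but automatic, because $p_1(x),\dots,p_t(x)$ are distinct monic linear polynomials over $K$ and so have $t$ distinct roots in $K$.
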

\begin{proof}
	The proof is similar to Propositions \ref{form1} and \ref{form2}, thus it is omitted.
\end{proof}

	\begin{example}
		Consider the function field $F = \mathbb{F}_8(x,y)$ defined by 
		$$y^4 + y^2 + y = \frac{x^2(x+1)^2(x^2+x+1)}{x^3+x+1}.$$
		Then by Proposition \ref{form2}, we get that
		$$G(Q_\infty) = \{1,2,3,5,6,7,9,10,11,13,14,17\},$$
		$m_{H(Q_\infty)} = 4$ and $F_{H(Q_\infty)} = 17$. 
		By Proposition \ref{form3}, for each $1\leq l\leq 3$, we obtain that
		$$G(Q_l) = \{1,2,3,5,6,7,9,10,11,14,15,19\},$$
		$m_{H(Q_l)} = 4$ and $F_{H(Q_l)} = 19$. 
		Moreover, we have 
		$$\Gamma(Q_1,Q_2) = \left\{\begin{array}{l}(1,9),(9,1),(5,5),(2,14),(14,2),(6,10),(10,6),\\
			(3,19),(19,3),(7,15),(15,7),(11,11)\end{array}\right\},$$
		$$\Gamma(Q_1,Q_2,Q_3) = \left\{\begin{array}{l}(5,1,1),(1,5,1),(5,1,5),(10,2,2),(2,10,2),(2,2,10),\\
			(6,6,2),(6,2,6),(2,6,6),(15,3,3),(3,15,3),(3,3,15),(11,7,3),\\
			(11,3,7),(7,11,3),(7,3,11),(3,11,7),(3,7,11),(7,7,7)\end{array}\right\},$$
		and $\Gamma(Q_\infty,Q_1,Q_2,Q_3)$ is given by 
		$$\left\{\begin{array}{l}(3,1,1,1),(2,6,2,2),(2,2,6,2),(2,2,2,6),(6,2,2,2),\\
		(1,11,3,3),(1,3,11,3),(1,3,3,11),(1,7,7,3),(1,7,3,7),(1,3,7,7),\\
		(5,7,3,3),(5,3,7,3),(5,3,3,7),(9,3,3,3)\end{array}\right\}.$$
	\end{example}

	\section*{Acknowledgements}
	{\sloppy\par This work is supported by Guangdong Basic and Applied Basic Research Foundation (No. 2025A1515011764), the National Natural Science Foundation of China (No. 12441107), and the National Key Research and Development Program of China (No. 2025YFA1017100)\par}
    
	\bibliographystyle{elsarticle-num}
	\bibliography{refs}

@book{stichtenothAlgebraicFunctionFields2009,
  title = {Algebraic {{Function Fields}} and {{Codes}}},
  author = {Stichtenoth, Henning},
  year = {2009},
  series = {Graduate {{Texts}} in {{Mathematics}}},
  number = {254},
  publisher = {Springer Berlin Heidelberg},
  doi = {10.1007/978-3-540-76878-4},
  isbn = {978-3-540-76877-7 978-3-540-76878-4},
  edition = {Second},
  address = {Berlin}
}

@article{navarroBasesRiemann2024,
  title = {Bases for {{Riemann}}--{{Roch}} Spaces of Linearized Function Fields with Applications to Generalized Algebraic Geometry Codes},
  author = {Navarro, Horacio},
  year = 2024,
  journal = {Designs, Codes and Cryptography},
  volume = {92},
  number = {10},
  pages = {3033--3048},
  issn = {0925-1022, 1573-7586},
  doi = {10.1007/s10623-024-01426-6}
}

@article{castellanosWeierstrassSemigroups2024,
  title = {Weierstrass Semigroups, Pure Gaps and Codes on Function Fields},
  author = {Castellanos, Alonso S. and Mendoza, Erik A. R. and Quoos, Luciane},
  year = 2024,
  journal = {Designs, Codes and Cryptography},
  volume = {92},
  number = {5},
  pages = {1219--1242},
  issn = {0925-1022, 1573-7586},
  doi = {10.1007/s10623-023-01339-w}
}

@article{hommaWeierstrassSemigroup1996,
  title = {The {{Weierstrass}} Semigroup of a Pair of Points on a Curve},
  author = {Homma, Masaaki},
  year = 1996,
  journal = {Archiv der Mathematik},
  volume = {67},
  number = {4},
  pages = {337--348},
  issn = {0003-889X, 1420-8938},
  doi = {10.1007/BF01197599}
}

@incollection{matthewsWeierstrassSemigroup2004,
  title = {The {{Weierstrass Semigroup}} of an $m$-Tuple of {{Collinear Points}} on a {{Hermitian Curve}}},
  booktitle = {Finite {{Fields}} and {{Applications}}},
  author = {Matthews, Gretchen L.},
  editor = {Goos, Gerhard and Hartmanis, Juris and Van Leeuwen, Jan and Mullen, Gary L. and Poli, Alain and Stichtenoth, Henning},
  year = 2004,
  volume = {2948},
  pages = {12--24},
  publisher = {Springer Berlin Heidelberg},
  address = {Berlin, Heidelberg},
  doi = {10.1007/978-3-540-24633-6_2},
  isbn = {978-3-540-21324-6 978-3-540-24633-6}
}

@article{castellanosWeierstrassSemigroup2018,
  title = {On {{Weierstrass}} Semigroup at m Points on Curves of the Form $f(y)=g(x)$},
  author = {Castellanos, A.S. and Tizziotti, G.},
  year = 2018,
  journal = {Journal of Pure and Applied Algebra},
  volume = {222},
  number = {7},
  pages = {1803--1809},
  issn = {00224049},
  doi = {10.1016/j.jpaa.2017.08.007}
}

@article{kimIndexWeierstrass1994,
  title = {On the Index of the {{Weierstrass}} Semigroup of a Pair of Points on a Curve},
  author = {Kim, Seon Jeong},
  year = 1994,
  journal = {Archiv der Mathematik},
  volume = {62},
  number = {1},
  pages = {73--82},
  issn = {0003-889X, 1420-8938},
  doi = {10.1007/BF01200442}
}

@article{castellanosWeierstrassSemigroupsPure2024,
  title = {Weierstrass Semigroups, Pure Gaps and Codes on Function Fields},
  author = {Castellanos, Alonso S. and Mendoza, Erik A. R. and Quoos, Luciane},
  year = {2024},
  journal = {Designs, Codes and Cryptography},
  volume = {92},
  number = {5},
  pages = {1219--1242},
  issn = {0925-1022, 1573-7586},
  doi = {10.1007/s10623-023-01339-w}
}

@article{mendozaKummerExtensions2023,
  title = {On {{Kummer}} Extensions with One Place at Infinity},
  author = {Mendoza, Erik A.R.},
  year = 2023,
  journal = {Finite Fields and Their Applications},
  volume = {89},
  pages = {102209},
  issn = {10715797},
  doi = {10.1016/j.ffa.2023.102209}
}

@article{abdonWeierstrassPoints2019,
  title = {Weierstrass Points on {{Kummer}} Extensions},
  author = {Abd{\'o}n, Miriam and Borges, Herivelto and Quoos, Luciane},
  year = 2019,
  journal = {Advances in Geometry},
  volume = {19},
  number = {3},
  pages = {323--333},
  issn = {1615-7168, 1615-715X},
  doi = {10.1515/advgeom-2018-0021}
}

@article{beelenWeierstrassSemigroups2021,
  title = {Weierstrass Semigroups on the {{Skabelund}} Maximal Curve},
  author = {Beelen, Peter and Landi, Leonardo and Montanucci, Maria},
  year = 2021,
  journal = {Finite Fields and Their Applications},
  volume = {72},
  pages = {101811},
  issn = {10715797},
  doi = {10.1016/j.ffa.2021.101811}
}

@article{bartoliWeierstrassSemigroups2021a,
  title = {Weierstrass Semigroups at Every Point of the {{Suzuki}} Curve},
  author = {Bartoli, Daniele and Montanucci, Maria and Zini, Giovanni},
  year = 2021,
  journal = {Acta Arithmetica},
  volume = {197},
  number = {1},
  pages = {1--20},
  issn = {0065-1036, 1730-6264},
  doi = {10.4064/aa181203-24-2}
}

@article{beelenWeierstrassSemigroups2018,
  title = {Weierstrass Semigroups on the {{Giulietti}}--{{Korchm\'aros}} Curve},
  author = {Beelen, Peter and Montanucci, Maria},
  year = 2018,
  journal = {Finite Fields and Their Applications},
  volume = {52},
  pages = {10--29},
  issn = {10715797},
  doi = {10.1016/j.ffa.2018.03.002}
}

@article{guneriAutomorphismGroup2013,
  title = {The Automorphism Group of the Generalized {{Giulietti}}--{{Korchm\'aros}} Function Field},
  author = {G{\"u}neri, Cem and {\"O}zdemiry, Mehmet and Stichtenoth, Henning},
  year = 2013,
  journal = {Advances in Geometry},
  volume = {13},
  number = {2},
  pages = {369--380},
  issn = {1615-7168, 1615-715X},
  doi = {10.1515/advgeom-2012-0040}
}

@article{castellanosOneTwoPointCodes2016,
  title = {One- and {{Two-Point Codes Over Kummer Extensions}}},
  author = {Castellanos, Alonso S. and Masuda, Ariane M. and Quoos, Luciane},
  year = 2016,
  journal = {IEEE Transactions on Information Theory},
  volume = {62},
  number = {9},
  pages = {4867--4872},
  issn = {0018-9448, 1557-9654},
  doi = {10.1109/TIT.2016.2583437}
}

@article{montanucciAGCodes2020,
  title = {{{AG}} Codes from the Second Generalization of the {{GK}} Maximal Curve},
  author = {Montanucci, Maria and Pallozzi Lavorante, Vincenzo},
  year = 2020,
  journal = {Discrete Mathematics},
  volume = {343},
  number = {5},
  pages = {111810},
  issn = {0012365X},
  doi = {10.1016/j.disc.2020.111810}
}

@article{garciaConsecutiveWeierstrass1993,
  title = {Consecutive {{Weierstrass}} Gaps and Minimum Distance of {{Goppa}} Codes},
  author = {Garcia, Arnaldo and Kim, Seon Jeong and Lax, Robert F.},
  year = 1993,
  journal = {Journal of Pure and Applied Algebra},
  volume = {84},
  number = {2},
  pages = {199--207},
  issn = {00224049},
  doi = {10.1016/0022-4049(93)90039-V}
}

@article{niemannNonisomorphicMaximal2025,
  title = {Non-Isomorphic Maximal Function Fields of Genus $q - 1$},
  author = {Niemann, Jonathan},
  year = 2025,
  journal = {Finite Fields and Their Applications},
  volume = {106},
  pages = {102618},
  issn = {10715797},
  doi = {10.1016/j.ffa.2025.102618}
}

@article{beelenFamilyNonisomorphic2025,
  title = {A Family of Non-Isomorphic Maximal Function Fields},
  author = {Beelen, Peter and Montanucci, Maria and Niemann, Jonathan and Quoos, Luciane},
  year = 2025,
  journal = {Mathematische Zeitschrift},
  volume = {309},
  number = {2},
  pages = {19},
  issn = {0025-5874, 1432-1823},
  doi = {10.1007/s00209-024-03650-1}
}

@article{maAutomorphismGroups2016a,
  title = {On Automorphism Groups of Cyclotomic Function Fields over Finite Fields},
  author = {Ma, Liming and Xing, Chaoping and Yeo, Sze Ling},
  year = 2016,
  journal = {Journal of Number Theory},
  volume = {169},
  pages = {406--419},
  issn = {0022314X},
  doi = {10.1016/j.jnt.2016.05.026}
}

@article{montanucciAutomorphismGroup2024,
  title = {On the Automorphism Group of a Family of Maximal Curves Not Covered by the {{Hermitian}} Curve},
  author = {Montanucci, Maria and Tizziotti, Guilherme and Zini, Giovanni},
  year = 2024,
  journal = {Finite Fields and Their Applications},
  volume = {99},
  pages = {102498},
  issn = {10715797},
  doi = {10.1016/j.ffa.2024.102498}
}

@article{beelenWeierstrassSemigroups2023,
  title = {Weierstrass Semigroups and Automorphism Group of a Maximal Curve with the Third Largest Genus},
  author = {Beelen, Peter and Montanucci, Maria and Vicino, Lara},
  year = 2023,
  journal = {Finite Fields and Their Applications},
  volume = {92},
  pages = {102300},
  issn = {10715797},
  doi = {10.1016/j.ffa.2023.102300}
}

@article{beelenWeierstrassSemigroups2026a,
  title = {Weierstrass Semigroups and Automorphism Group of a Maximal Function Field with the Third Largest Possible Genus, $q \equiv 1 \pmod 3$},
  author = {Beelen, Peter and Montanucci, Maria and Vicino, Lara},
  year = 2026,
  journal = {Finite Fields and Their Applications},
  volume = {109},
  pages = {102701},
  issn = {10715797},
  doi = {10.1016/j.ffa.2025.102701}
}

@article{beelenWeierstrassSemigroups2026,
  title = {Weierstrass Semigroups and Automorphism Group of a Maximal Function Field with the Third Largest Possible Genus, $q \equiv 0 \pmod 3$},
  author = {Beelen, Peter and Montanucci, Maria and Vicino, Lara},
  year = 2026,
  journal = {Finite Fields and Their Applications},
  volume = {110},
  pages = {102729},
  issn = {10715797},
  doi = {10.1016/j.ffa.2025.102729}
}

@book{arbarelloGeometryAlgebraic1985,
  title = {Geometry of {{Algebraic Curves}}},
  author = {Arbarello, E. and Cornalba, M. and Griffiths, P. A. and Harris, J.},
  year = 1985,
  series = {Grundlehren Der Mathematischen {{Wissenschaften}}},
  volume = {267},
  publisher = {Springer New York},
  address = {New York, NY},
  doi = {10.1007/978-1-4757-5323-3},
  isbn = {978-1-4419-2825-2 978-1-4757-5323-3}
}

@article{carvalhoGoppaCodes2005,
  title = {On {{Goppa Codes}} and {{Weierstrass Gaps}} at {{Several Points}}},
  author = {Carvalho, C{\'i}cero and Torres, Fernando},
  year = 2005,
  journal = {Designs, Codes and Cryptography},
  volume = {35},
  number = {2},
  pages = {211--225},
  issn = {0925-1022, 1573-7586},
  doi = {10.1007/s10623-005-6403-4}
}

@incollection{matthewsWeierstrassSemigroups2009,
  title = {On {{Weierstrass Semigroups}} of {{Some Triples}} on {{Norm-Trace Curves}}},
  booktitle = {Coding and {{Cryptology}}},
  author = {Matthews, Gretchen L.},
  editor = {Chee, Yeow Meng and Li, Chao and Ling, San and Wang, Huaxiong and Xing, Chaoping},
  year = 2009,
  volume = {5557},
  pages = {146--156},
  publisher = {Springer Berlin Heidelberg},
  address = {Berlin, Heidelberg},
  doi = {10.1007/978-3-642-01877-0_13},
  isbn = {978-3-642-01813-8 978-3-642-01877-0}
}

@article{matthewsWeierstrassSemigroups2005,
  title = {Weierstrass {{Semigroups}} and {{Codes}} from a {{Quotient}} of the {{Hermitian Curve}}},
  author = {Matthews, Gretchen L.},
  year = 2005,
  journal = {Designs, Codes and Cryptography},
  volume = {37},
  number = {3},
  pages = {473--492},
  issn = {0925-1022, 1573-7586},
  doi = {10.1007/s10623-004-4038-5}
}

@incollection{matthewsMinimalGenerating2010,
  title = {Minimal Generating Sets of {{Weierstrass}} Semigroups of Certain $m$-Tuples on the Norm-Trace Function Field},
  booktitle = {Contemporary {{Mathematics}}},
  author = {Matthews, Gretchen L. and Peachey, Justin D.},
  editor = {McGuire, Gary and Mullen, Gary L. and Panario, Daniel and Shparlinski, Igor E.},
  year = 2010,
  volume = {518},
  pages = {315--326},
  publisher = {American Mathematical Society},
  address = {Providence, Rhode Island},
  doi = {10.1090/conm/518/10214},
  isbn = {978-0-8218-4786-2 978-0-8218-8197-2}
}

@article{huMultipointCodes2020,
  title = {Multi-Point Codes from the {{GGS}} Curves},
  author = {Hu, Chuangqiang and Yang, Shudi},
  year = 2020,
  journal = {Advances in Mathematics of Communications},
  volume = {14},
  number = {2},
  pages = {279--299},
  issn = {1930-5338},
  doi = {10.3934/amc.2020020}
}

@article{huMultipointCodes2018,
  title = {Multi-Point Codes over {{Kummer}} Extensions},
  author = {Hu, Chuangqiang and Yang, Shudi},
  year = 2018,
  journal = {Designs, Codes and Cryptography},
  volume = {86},
  number = {1},
  pages = {211--230},
  issn = {0925-1022, 1573-7586},
  doi = {10.1007/s10623-017-0335-7}
}

@article{yangWeierstrassSemigroups2017,
  title = {Weierstrass Semigroups from {{Kummer}} Extensions},
  author = {Yang, Shudi and Hu, Chuangqiang},
  year = 2017,
  journal = {Finite Fields and Their Applications},
  volume = {45},
  pages = {264--284},
  issn = {10715797},
  doi = {10.1016/j.ffa.2016.12.005}
}

@article{yangWeierstrassSemigroups2024,
  title = {Weierstrass Semigroups on the Third Function Field in a Tower Attaining the {{Drinfeld-Vl\u adu\c t}} Bound},
  author = {Yang, Shudi and Hu, Chuangqiang},
  year = 2024,
  journal = {Advances in Mathematics of Communications},
  volume = {18},
  number = {4},
  pages = {1051--1083},
  issn = {1930-5346, 1930-5338},
  doi = {10.3934/amc.2022066}
}

@article{castellanosGeneralizedWeierstrass2026,
  title = {On Generalized {{Weierstrass}} Semigroups in Arbitrary {{Kummer}} Extensions of $\mathbb{F}_q (x)$},
  author = {Castellanos, Alonso S. and Mendoza, Erik and Tizziotti, Guilherme},
  year = 2026,
  journal = {Finite Fields and Their Applications},
  volume = {112},
  pages = {102808},
  issn = {10715797},
  doi = {10.1016/j.ffa.2026.102808}
}

@article{tizziottiWeierstrassSemigroup2018,
  title = {Weierstrass {{Semigroup}} and {{Pure Gaps}} at {{Several Points}} on the {{GK Curve}}},
  author = {Tizziotti, G. and Castellanos, A. S.},
  year = 2018,
  journal = {Bulletin of the Brazilian Mathematical Society, New Series},
  volume = {49},
  number = {2},
  pages = {419--429},
  issn = {1678-7544, 1678-7714},
  doi = {10.1007/s00574-017-0059-3}
}

@article{maharajCodeConstruction2004,
  title = {Code {{Construction}} on {{Fiber Products}} of {{Kummer Covers}}},
  author = {Maharaj, H.},
  year = 2004,
  journal = {IEEE Transactions on Information Theory},
  volume = {50},
  number = {9},
  pages = {2169--2173},
  issn = {0018-9448},
  doi = {10.1109/TIT.2004.833356}
}

@article{castellanosWeierstrassSemigroup2020,
  title = {Weierstrass Semigroup at $m+1$ Rational Points in Maximal Curves Which Cannot Be Covered by the {{Hermitian}} Curve},
  author = {Castellanos, Alonso Sep{\'u}lveda and {Bras-Amor{\'o}s}, Maria},
  year = 2020,
  journal = {Designs, Codes and Cryptography},
  volume = {88},
  number = {8},
  pages = {1595--1616},
  issn = {0925-1022, 1573-7586},
  doi = {10.1007/s10623-020-00757-4}
}

@article{matthewsTriplesRational2021,
  title = {Triples of Rational Points on the {{Hermitian}} Curve and Their {{Weierstrass}} Semigroups},
  author = {Matthews, Gretchen L. and Skabelund, Dane and Wills, Michael},
  year = 2021,
  journal = {Journal of Pure and Applied Algebra},
  volume = {225},
  number = {8},
  pages = {106623},
  issn = {00224049},
  doi = {10.1016/j.jpaa.2020.106623}
}

@article{korchmarosHermitianCodes2013a,
  title = {Hermitian Codes from Higher Degree Places},
  author = {Korchm{\'a}ros, G. and Nagy, G.P.},
  year = 2013,
  journal = {Journal of Pure and Applied Algebra},
  volume = {217},
  number = {12},
  pages = {2371--2381},
  issn = {00224049},
  doi = {10.1016/j.jpaa.2013.04.002}
}

@article{geilBoundingNumber2009,
  title = {Bounding the Number of {{F}} q -Rational Places in Algebraic Function Fields Using {{Weierstrass}} Semigroups},
  author = {Geil, Olav and Matsumoto, Ryutaroh},
  year = 2009,
  journal = {Journal of Pure and Applied Algebra},
  volume = {213},
  number = {6},
  pages = {1152--1156},
  issn = {00224049},
  doi = {10.1016/j.jpaa.2008.11.013}
}

@misc{cotterillGapSets2025,
  title = {On Gap Sets in Arbitrary {{Kummer}} Extensions of ${{K}}(x)$},
  author = {Cotterill, Ethan and Mendoza, Erik A. R. and Speziali, Pietro},
  year = 2025,
  number = {arXiv:2506.19169},
  eprint = {2506.19169},
  primaryclass = {math},
  publisher = {arXiv},
  doi = {10.48550/arXiv.2506.19169},
  archiveprefix = {arXiv}
}
\end{document}